\newcommand{\smtt}[1]{\texttt{\small #1}}
\newcommand{\smsf}[1]{\textsf{\small #1}}
\newcommand{\algocomment}[1]{{\hfill \smtt{/\!/ #1\quad}}}
\theoremstyle{plain}
\newtheorem{theorem}{Theorem}[section] 
\newtheorem{lemma}[theorem]{Lemma}
\newtheorem{fact}[theorem]{Fact}
\newcommand{\eat}[1]{}
\newcommand{\EE}{\mathbb{E}}
\newcommand{\RR}{\mathbb{R}}
\renewcommand{\SS}{\mathbb{S}}
\newcommand{\cA}{\mathcal{A}}
\newcommand{\cC}{\mathcal{C}}
\newcommand{\cF}{\mathcal{F}}
\newcommand{\cI}{\mathcal{I}}
\newcommand{\cK}{\mathcal{K}}
\newcommand{\cL}{\mathcal{L}}
\newcommand{\cN}{\mathcal{N}}
\newcommand{\cO}{\mathcal{O}}
\newcommand{\cX}{\mathcal{X}}
\newcommand{\cY}{\mathcal{Y}}
\newcommand{\cZ}{\mathcal{Z}}
\DeclareMathOperator*{\argmax}{arg\,max}
\DeclareMathOperator*{\argmin}{arg\,min}
\DeclareMathOperator*{\minimize}{min}
\DeclareMathOperator*{\maximize}{max}
\DeclareMathOperator*{\st}{s.\!t.\!}
\newcommand{\proj}{\mathrm{proj}}
\newcommand{\dist}{\mathrm{dist}}
\newcommand{\adj}{\ensuremath{\mathclose{\vphantom{)}}^*}}
\newcommand{\nuc}{\ensuremath{\mathclose{\vphantom{)}}_*}}
\newcommand{\fro}{\ensuremath{\mathclose{\vphantom{)}}_\textrm{F}}}
\newcommand{\op}{\ensuremath{\mathclose{\vphantom{)}}_\textrm{op}}}
\newcommand{\econstraint}{$\exists$-constraint\;}
\newcommand{\econstraints}{$\exists$-constraints\;}
\newcommand{\svec}{\mathrm{svec}}
\newcommand{\rank}{\mathrm{rank}}
\newcommand{\tr}{\mathrm{tr}}
\title{Fast, Scalable, Warm-Start Semidefinite Programming \\ with Spectral Bundling and Sketching}
\author{Rico Angell\thanks{Manning College of Information and Computer Sciences, University of Massachusetts Amherst, Amherst, MA, USA. Correspondence to Rico Angell (\href{mailto:rangell@cs.umass.edu}{rangell@cs.umass.edu}).} \and  Andrew McCallum\footnotemark[1]}
\date{}
\begin{document}

\maketitle

\begin{abstract}
\noindent
While semidefinite programming (SDP) has traditionally been limited to moderate-sized problems, recent algorithms augmented with matrix sketching techniques have enabled solving larger SDPs.
However, these methods achieve scalability at the cost of an increase in the number of necessary iterations, resulting in slower convergence as the problem size grows.
Furthermore, they require iteration-dependent parameter schedules that prohibit effective utilization of warm-start initializations important in practical applications with incrementally-arriving data or mixed-integer programming.
We present Unified Spectral Bundling with Sketching (USBS), a provably correct, fast and scalable algorithm for solving massive SDPs that can leverage a warm-start initialization to further accelerate convergence.
Our proposed algorithm is a spectral bundle method for solving general SDPs containing both equality and inequality constraints.  
Moveover, when augmented with an optional matrix sketching technique, our algorithm achieves the dramatically improved scalability of previous work while sustaining convergence speed.
We empirically demonstrate the effectiveness of our method across multiple applications, with and without warm-starting. For example, USBS provides a 500x speed-up over the state-of-the-art scalable SDP solver on an instance with over 2 billion decision variables.
\end{abstract}

\section{Introduction}

Semidefinite programming (SDP) is a convex optimization paradigm capable of modeling or approximating many practical problems in combinatorial optimization~\cite{alizadeh1995interior}, neural network verification~\cite{raghunathan2018semidefinite,dathathri2020enabling}, robotics~\cite{rosen2019se}, optimal experiment design~\cite{vandenberghe1998determinant}, VLSI~\cite{vandenberghe1998optimizing}, and systems and control theory~\cite{bertsimas1995achievable}.
Despite their widespread applicability, practitioners often dismiss the use of SDPs under the presumption that optimization is intractable at real-world scale.
This assumption is grounded in the prohibitively high computational complexity of standard SDP solvers~\cite{alizadeh1998primal,vandenberghe1999applications}. 

The main challenge when solving SDPs with standard constrained optimization approaches is the high cost of projection onto the feasible region.
Projecting a symmetric matrix onto the semidefinite cone requires a full eigendecomposition, an operation that scales cubicly with the problem dimension.
The high computational cost of this single operation severely limits the applicability of projected gradient and ADMM-based approaches~\cite{boyd2011distributed,scs}.
Interior point methods, the de facto approach for solving small-to-moderate sized SDPs, require even more computational resources.

Projection-free and conditional gradient-based methods have been proposed to avoid the computational burden of projecting iterates onto the semidefinite cone~\cite{yurtsever2019conditional,yurtsever2015universal,arora2005fast}.
These methods avoid projection onto the feasible region with various techniques including subgradient descent in the dual space and a conditional gradient method on an unconstrained augmented objective function.
The bulk of the computational burden for these methods is typically an extreme eigenvector calculation.
The scalablity of these methods is still limited, despite the improved per-iteration complexity, since they require storing the entire primal matrix which scales quadratically with the problem dimension.

The most well-known method for scaling semidefinte programming without storing the entire primal matrix in memory is the Burer–Monteiro (BM) factorization heuristic~\cite{burer2003nonlinear}.
The core idea of the BM method is to explicitly control the memory usage by restricting the primal matrix to a low-rank factorization.
However, this method sacrifices the convexity of the problem for scalability. In some cases, the BM method either requires having a high rank factorization, leading to burdensome memory requirements, or risks optimization getting stuck in local minima~\cite{waldspurger2020rank}.
\citet{yurtsever2021scalable} extend the conditional gradient augmented Lagrangian approach~\cite{yurtsever2019conditional} with a matrix sketching technique. This approach avoids storing the entire primal matrix while maintaining the convexity of the problem leading to provable convergence for general SDPs.
In addition, this method uses iteration-dependent parameters prohibiting it from reliably leveraging a warm-start initialization.

Spectral bundle methods, first proposed by~\citet{helmberg2000spectral}, are an appealing framework for solving SDPs due to their low per-iteration computational complexity and fast empirical convergence. 
While several spectral bundle methods have been presented in the literature  ~\cite{helmberg2002spectral,apkarian2008trust,helmberg2014spectral,ding2023revisiting}, previous work considers SDPs with either only equality constraints or only inequality constraints.
Furthermore, the lack of an efficient standalone implementation has prevented the evaluation of spectral bundle method on massive SDPs.

\paragraph{Contributions.} We present USBS, a unified spectral bundle method designed for a broader class of SDPs and show it is a practical approach for solving large problem instances quickly. 
USBS flexibly allows the user to control the trade-off between per-iteration complexity and the empirical speed at which the algorithm converges.
In addition, USBS can be augmented with a matrix sketching technique that can dramatically improve the scalability of the algorithm while maintaining its fast convergence.
 We demonstrate the empirical efficacy of our proposed spectral bundle method on three types of practical large SDPs seeing extraordinary performance improvements in comparison with the previous state-of-the-art scalable solver for general SDPs.
For example, USBS is able to obtain a quality solution to an SDP with over $10^{13}$ decision variables while the previous state-of-the-art fails to reach an accurate solution within 72 hours.
In addition, we see a 500x speedup over the previous state-of-the-art on an instance with 2 billion decision variables.
Finally, we find that warm-starting can lead to more than a 100x speedup in the convergence of USBS as compared to cold-starting while the previous state-of-the-art often is not able to reliably take advantage of a warm-start initialization.
We provide a standalone implementation of the algorithm in pure JAX~\cite{bradbury2018jax, frostig2018compiling}, enabling efficient execution on various hardware (CPU, GPU, TPU) and wide-spread use.

\section{Preliminaries}


\subsection{Semidefinite Programming}
We begin with the notation necessary to define the semidefinite programming problem.
Let $\SS^n$ be the set of all real symmetric $n \times n$ matrices and $\SS
^n_+ := \{X \in \SS^n : X \succeq 0\}$ be the positive semidefinite cone. Let $\langle \cdot, \cdot \rangle$ denote the standard inner product for vectors and the Frobenius inner product for matrices.
For any matrix $M \in \SS^n$, let $\lambda_{\max}(M)$ denote the maximum eigenvalue of $M$ and let $\tr(M)$ denote the trace of $M$.
Let $\cA: \SS^n \to \RR^m$ be a given linear operator and $\cA\adj : \RR^m \to \SS^n$ be its adjoint, i.e. $\langle \cA X, y \rangle = \langle X, \cA\adj y \rangle$ for any $X \in \SS^n$ and $y \in \RR^m$, which take the following form,
\begin{equation*}
   \cA X = \begin{bmatrix} 
   \langle A_1, X \rangle \\
   \vdots \\
   \langle A_m, X \rangle \\
   \end{bmatrix}
    \quad \textrm{and} \quad 
    \cA\adj  y = \sum_{i=1}^m y_i A_i,
\end{equation*}
for given matrices $A_i \in \SS^n$. For any given $\cI \subset \{1,\ldots,m\}$, let $\cA_{\cI}$ denote the linear operator restricted to matrices $A_i$ for $i \in \cI$ and $y_{\cI}$ to be the corresponding subset of rows for any $y \in \RR^m$. In general, we utilize $(\cdot)_\cI$ to extract the corresponding indices from any vector in $\RR^m$. Finally, let $\cI':= \{1, \cdots, m\} \setminus \cI$ be the compliment of $\cI$. 

Given fixed $C \in \SS^n$, $\cA$, $b \in \RR^m$, and $\cI$, the primal~\eqref{eq:primal_prob} and dual~\eqref{eq:dual_prob} semidefinite programs can be written as follows:
\vspace*{-0.75cm}
\begin{multicols}{2}
\begin{equation*}
    \begin{aligned}
        &\!\!\maximize_{X \succeq 0} \;\langle C, X \rangle \\
        &\st \;\; \cA_{\cI^{'}} \!X = b_{\cI^{'}} \\
        &\qquad\cA_{\cI} X \leq b_{\cI}
    \end{aligned}\!\!\!\tag{P}\label{eq:primal_prob}
\end{equation*}
\vfill
  \columnbreak
\begin{equation*}
    \begin{aligned}
        &\!\!\minimize_{y \in \RR^m} \; \langle b, y \rangle \\
        &\st \;\; C - \cA\adj  y \preceq 0 \\
        &\qquad y_{\cI} \geq 0
    \end{aligned}\!\!\!\!\!\tag{D}\label{eq:dual_prob}
\end{equation*}
\end{multicols}
\vspace*{-0.25cm}
Note that while all SDPs of this form can technically be written as equality constrained SDPs (i.e. \emph{standard-form} SDPs), doing so in practice is ill-advised since the problem dimension, $n$, grows by one for every inequality constraint.
In addition, we also represent the primal feasibility set as the convex set $\cK := \{ z \in \RR^m : z_\cI \leq b_\cI, z_{\cI'} = b_{\cI'} \}$,
and measure the primal infeasibility $\dist(\cA X, \cK)$. 
We use $\dist(z, \cZ)$ to denote the Euclidean distance from a point $z$ to a closed set $\cZ$ and $\proj_\cZ(z)$ to denote the Euclidean projection of $z$ onto $\cZ$.
Equivalently, 
\begin{equation*}
    \dist(z, \cZ) = \inf_{z' \in \cZ} \| z - z' \| = \|z - \proj_\cZ(z) \|.
\end{equation*}
We denote the solution sets of~\eqref{eq:primal_prob} and~\eqref{eq:dual_prob} by $\cX_\star$ and $\cY_\star$, respectively.
We make the standard assumption that~\eqref{eq:primal_prob} and~\eqref{eq:dual_prob} satisfy \emph{strong duality}, namely that the solution sets $\cX_\star$ and $\cY_\star$ are nonempty, compact, and every pair of solutions $(X_\star, y_\star) \in \cX_\star \times \cY_\star$ has zero duality gap: $p_\star := \langle C, X_\star \rangle = \langle b, y_\star \rangle =: d_\star$.
Strong duality holds whenever Slater's condition holds and $\cA$ is a surjective linear operator.
Additionally, we say that the SDP satisfies \emph{strict complementarity} if there exists $(X_\star, y_\star)$ such that for induced dual slack matrix $Z_\star := C - \cA\adj y_\star$, $ \rank(X_\star) + \rank(Z_\star) = n.$
Strict complementarity is satisfied by generic SDPs~\cite{alizadeh1997complementarity} and many well-structured SDPs~\cite{ding2021simplicity}.
Lastly, we denote the maximum nuclear norm of the primal solution set as $\mathfrak{N}(\cX_\star) := \sup_{X_\star \in \cX_\star} \| X_\star\|\nuc$.

In many important applications, SDPs take a highly structured and sparse forms that admit low-rank solutions.
The cost matrix $C$ is often sparse, containing many fewer than $n^2$ non-zero entries.
Additionally, the number of primal constraints $m$ is often much less $n^2$, being proportional to $n$ or the number of non-zero entries in $C$.
We would like to exploit these features with a provably correct, fast and scalable optimization algorithm that can solve any SDP.


\subsection{Proximal Bundle Method}

Proximal bundle methods~\cite{lemarechal1981bundle,mifflin1977algorithm,feltenmark2000dual,kiwiel2000efficiency} are a class of optimization algorithms for solving unconstrained convex minimization problems of the form $\min_{y \in \RR^m} f(y)$,
where $f: \RR^m \to (-\infty, +\infty]$ is a proper closed convex function that attains its minimum value, $\inf f$, on some nonempty set. At each iteration, the proximal bundle method proposes an update to the current iterate $y_t$ by applying a proximal step to an approximation of the objective function, $\hat{f}_t$:
\begin{equation}
\label{eq:proposal_step}
\tilde{y}_{t+1} \gets \argmin_{y \in \RR^m} \hat{f}_t(y) + \frac{\rho}{2} \| y - y_t \|^2
\end{equation}
where $\rho > 0$.
The next iterate $y_{t+1}$ is set equal to $\tilde{y}_{t+1}$ only when the decrease in the objective value is at least a fixed fraction of the decrease predicted by the model $\hat{f}_t$, i.e.
\begin{equation}
\label{eq:descent_cond}
\beta (f(y_t) - \hat{f}_t(\tilde{y}_{t+1})) \leq f(y_t) - f(\tilde{y}_{t+1})
\end{equation}
for some fixed $\beta \in (0, 1)$. The iterations where~\eqref{eq:descent_cond} is satisfied (and thus, $y_{t+1} \gets \tilde{y}_{t+1}$) are referred to as \emph{descent steps}. Otherwise, the algorithm takes a \emph{null step} and sets $y_{t+1} \gets y_{t}$. 
Regardless of whether~\eqref{eq:descent_cond} is satisfied or not, $\tilde{y}_{t+1}$ is used to construct the next model $\hat{f}_{t+1}$.

\paragraph{Model Requirements.} The model $\hat{f}_t$ can take many forms, but is usually constructed using subgradients of $f$ at past and current iterates. Let $\partial f(y) := \{g : f(y') \geq f(y) + \langle g, y' - y \rangle, \,\forall y' \in \RR^m\}$ denote the subdifferential of $f$ at $y$ (i.e.\!\! the set of subgradients of $f$ evaluated at a point $y$). Following prior work, we require the next model $\hat{f}_{t+1}$ to satisfy the following mild assumptions:
\begin{enumerate}
    \item \textit{\textbf{Minorant.}}
    \begin{equation}
    \label{eq:minorant_cond}
        \hat{f}_{t+1}(y) \leq f(y),\, \forall y \in \RR^m
    \end{equation}
    \item \textit{\textbf{Subgradient lower bound.}} For any $ g_{t+1} \in \partial f(\tilde{y}_{t+1})$, 
    \begin{equation}
        \label{eq:obj_subgrad_cond}
        \hat{f}_{t+1}(y) \geq f(\tilde{y}_{t+1}) + \langle g_{t+1}, y - \tilde{y}_{t+1}\rangle,\, \forall y \in \RR^m
    \end{equation}
    \item \textit{\textbf{Model subgradient lower bound.}} The first order optimality conditions for~\eqref{eq:proposal_step} gives the subgradient $s_{t+1} := \rho(y_t - \tilde{y}_{t+1}) \in \partial \hat{f}_{t} (\tilde{y}_{t+1})$. After a null step $t$, 
    \begin{equation}
        \label{eq:model_subgrad_cond}
        \hat{f}_{t+1}(y) \geq \hat{f}_{t}(\tilde{y}_{t+1}) + \langle s_{t+1}, y - \tilde{y}_{t+1}\rangle,\, \forall y \in \RR^m
    \end{equation}
\end{enumerate}
\vspace{-1em}
The first two conditions serve to guarantee that a new model integrates first-order information from the objective at $\tilde{y}_{t+1}$. The third condition demands the new model to preserve approximation accuracy exhibited by the preceding model.

Assuming these model requirements, \citet{diaz2023optimal} proved non-asymptotic convergence rates for the proximal bundle method under various conditions.
We summarize the results relevant to this work in Theorem~\ref{thm:proximal_bundle_convergence}.

\section{Unified Spectral Bundling}
\label{sec:specbm}
In this section, we will present USBS, our proposed algorithm for solving the SDP defined in~\eqref{eq:primal_prob} and~\eqref{eq:dual_prob}, 
Our proposed spectral bundle method is a unified algorithm for solving SDPs with both equality and inequality constraints and can be integrated with a matrix sketching technique for dramatically improved scalability as demonstrated in Section~\ref{sec:sketching}.
To apply the proximal bundle method, we consider consider minimizing the following unconstrained objective 
\begin{equation}
    f(y) := \alpha \, [\lambda_\textrm{max}(C - \cA\adj y)]_+ + \langle b, y \rangle + \iota_\cY(y),\tag{pen-D}\label{eq:pen_dual}
\end{equation}
where $[\,\cdot\,]_+ := \max\{\,\cdot\,, 0\}$ and $\cY := \{y \in \RR^m : y_{\cI} \geq  0\}$.
Minimizing~\eqref{eq:pen_dual} is equivalent to optimizing~\eqref{eq:dual_prob} in the sense that the optimal solution set and objective are the same (Appendix~\ref{sec:derive_pen_dual}).
In the following subsections, we will detail how the model is constructed and how to compute the candidate iterates.

\subsection{Spectral Bundle Model}
The form of~\eqref{eq:pen_dual} is not quite conducive to defining the model and solving for the candidate iterate $\tilde{y}_{t+1}$.
To address this, we will consider an equivalent formulation to~\eqref{eq:pen_dual} that is more amenable to this effort.
For any fixed $\alpha \geq 2 \mathfrak{N}(\cX_\star)$, let $\cX := \{X \in \SS^n_+ : \tr(X) \leq \alpha\}$ be the trace constrained subset of the primal domain. 
Let $\mathrm{N} := \{\nu \in \RR^m : \nu_{\cI} \leq 0, \nu_{\cI^{'}} = 0\}$ be the domain of the dual slack variable $\nu \in \RR^m$ for the indicator function $\iota_\cY(\cdot)$.
Then, we can rewrite the following equivalent formulation to~\eqref{eq:pen_dual} 
\begin{equation}
    \label{eq:equiv_pen_dual}
    f(y) = \sup_{(X, \nu) \in \cX \times \mathrm{N}} \langle C - \cA\adj y , X\rangle + \langle b - \nu, y \rangle.
\end{equation}
We derive this equivalent formulation in Appendix~\ref{sec:equiv_derive_pen_dual}.

\paragraph{Defining the model.} This equivalent formulation is clearly just as difficult to optimize as~\eqref{eq:pen_dual}, but is helpful in defining the model we will utilize in the spectral bundle method.
The main idea is to consider~\eqref{eq:equiv_pen_dual} over a low-dimensional subspace of $\cX$ such that ~\eqref{eq:minorant_cond},~\eqref{eq:obj_subgrad_cond}, and \eqref{eq:model_subgrad_cond} are satisfied.
The subspace we will consider at step $t$ is parameterized by matrices $\bar{X}_t \in \SS^n_+$ and $V_t \in \RR^{n \times k}$ and is defined as follows
\begin{equation}
\label{eq:subspace_def}
\widehat{\cX}_t := \left\{\eta \bar{X}_t + V_t S V_t^\top \middle\vert \begin{array}{l}\eta\, \tr(\bar{X}_t) + \tr(S) \leq \alpha \\ \eta \geq 0, S \in \SS^k_+\end{array}\!\!\!\right\}.
\end{equation}
The matrix $V_t$ has $k$ orthonormal column vectors, where $k$ is a small user defined parameter.
The columns of $V_t$ are partitioned into $k_c \geq 1$ current eigenvectors and $k_p \geq 0$ orthonormal vectors representing past spectral information.
Regardless of the setting of $k_c$ and $k_p$, the columns of $V_t$ will always include a maximum eigenvector $v_1$ of $C - \cA\adj  \tilde{y}_t$, which also means $b - \alpha\cA v_1 v_1^\top \in \partial f(\tilde{y}_t)$.
For scalability reasons, we expect $k = k_c + k_p$ to be relatively small.
The matrix $\bar{X}_t$ is a carefully selected weighted sum of past spectral bounds such that $\tr(\bar{X}_t) \leq \alpha$ and enables the last model condition \eqref{eq:model_subgrad_cond} to be satisfied.
Hence, the model is
\begin{equation}
    \label{eq:model_def}
    \hat{f}_t(y) := \sup_{(X, \nu) \in \widehat{\cX}_t \times \mathrm{N}} \langle C - \cA\adj y , X\rangle + \langle b - \nu, y \rangle.
\end{equation}
We show that this model satisfies the conditions~\eqref{eq:minorant_cond},~\eqref{eq:obj_subgrad_cond}, and \eqref{eq:model_subgrad_cond} in Appendix~\ref{sec:sublinear_convergence_proof}.

\paragraph{Updating the model.} Independent of whether a descent step or null step is taken, the model needs to be updated.
At every step, we solve the following minimax problem
\begin{equation}
    \min_{y \in \RR^m} \sup_{(X, \nu) \in \widehat{\cX}_t \times \mathrm{N}} \langle C - \cA\adj y , X\rangle + \langle b - \nu, y \rangle + \frac{\rho}{2} \| y - y_t \|^2,
\end{equation}
where $(\tilde{y}_{t+1}, X_{t+1}, \nu_{t+1})$ is the minimax solution.
The structure of $\widehat{\cX}_t$ allows us to rewrite $X_{t+1}$ as
\begin{equation}
    X_{t+1} = \eta_{t+1} \bar{X}_t + V_t S_{t+1} V_t^\top.
\end{equation}
To compute $\bar{X}_{t+1}$ and $V_{t+1}$ we first compute an
eigendecomposition of $S_{t+1}$ to separate current and past spectral information as follows
\begin{equation}
    \label{eq:model_update_eigendecomposition}
    S_{t+1} = Q_{\overline{p}} \Lambda_{\overline{p}} 
 Q_{\overline{p}}^\top + Q_{\underline{c}} \Lambda_{\underline{c}} 
 Q_{\underline{c}}^\top,
\end{equation}
where $\Lambda_{\overline{p}}$ is a diagonal matrix containing the largest $k_p$ eigenvalues and $Q_{\overline{p}} \in \RR^{k \times k_p}$ contains the corresponding eigenvectors.
The diagonal matrix $\Lambda_{\underline{c}}$ contains the remaining $k_c$ eigenvalues and the corresponding eigenvectors are contained in the columns of $Q_{\underline{c}} \in \RR^{k \times k_c}$.
Given this decomposition, we set the next model's $\bar{X}_{t+1}$ as follows
\begin{equation}
    \label{eq:x_bar_def}
    \bar{X}_{t+1} \gets \eta_{t+1} \bar{X}_t + V_t Q_{\underline{c}} \Lambda_{\underline{c}} Q_{\underline{c}}^\top V_t^\top.
\end{equation}
In the case that $k_p = 0$, observe that $\bar{X}_t = X_t$ for all $t$.
To update $V_{t+1}$, we first compute the top $k_c$ orthonormal eigenvectors $v_1, \ldots, v_{k_c}$ of $C - \cA\adj \tilde{y}_{t+1}$.
Then, we set $V_{t+1}$ to $k$ orthonormal vectors that span the columns of $[V_tQ_{\overline{p}}; v_1, \ldots, v_{k_c}]$ as follows
\begin{equation}
    \label{eq:v_next_def}
    V_{t+1} \gets \!\!\! \smtt{ orthonormalize}\Big([V_tQ_{\overline{p}}\,; v_1, \ldots, v_{k_c}]\Big),
\end{equation}
where we can use QR decomposition to orthonormalize the vectors. If $k_p = 0$, orthonormalization is unnecessary and we just set $V_{t+1} \gets [v_1, \ldots, v_{k_c}]$.

\subsection{Computing the Candidate Iterate} 

We will now discuss how to compute the candidate iterate $\tilde{y}_{t+1}$ by applying a proximal step to the current approximation of the objective function (i.e. solve~\eqref{eq:proposal_step}).
We show in Appendix~\ref{sec:derive_cand_iterate} that the candidate iterate can be computed as
\begin{equation}
    \label{eq:dual_update}
    \tilde{y}_{t+1} = y_t - \frac{1}{\rho} (b - \nu_{t+1} - \cA X_{t+1}),
\end{equation}
where $(X_{t+1}, \nu_{t+1}) \in \widehat{\cX}_t \times \mathrm{N}$ is the solution to the following optimization problem
\begin{equation}
(X_{t+1}, \nu_{t+1}) \in \argmax_{(X, \nu) \in \widehat{\cX}_t \times \mathrm{N}} \psi_t(X, \nu),
\end{equation}
where we define $\psi_t(X, \nu) :=  \langle C, X \rangle + \langle b - \nu - \cA X , y_t\rangle - \frac{1}{2\rho} \| b - \nu - \cA X \|^2.$ To solve for $(X_{t+1}, \nu_{t+1}) \in \widehat{\cX}_t \times \mathrm{N}$, we propose using the alternating maximization algorithm. After initializing $\tilde{\nu} = 0$, the the following update steps are repeated until convergence:
\begin{align}
    \tilde{X} &\gets \argmax_{X \in \widehat{\cX}_t} \; \psi_t(X, \tilde{\nu}) \label{eq:small_quad_sdp}\\
    \tilde{\nu} &\gets \argmax_{\nu \in \mathrm{N}} \; \psi_t(\tilde{X}, \nu) = \proj_{\mathrm{N}}(b - \cA\tilde{X} - \rho y_t) \label{eq:dual_slack_update}
\end{align}
Clearly $\psi_t$ is smooth, and in this case the alternating maximization algorithm is known to converge at a $\cO(1/\varepsilon)$ rate~\cite{beck2015convergence}.
If $\psi_t$ happens to also be strongly concave, the rate improves to $\cO(\log(1/\varepsilon))$~\cite{Luo1993ErrorBA}.
We solve~\eqref{eq:small_quad_sdp} using a primal-dual interior point method derived in Appendix~\ref{sec:small_quad_sdp_ipm}.

\begin{algorithm}[t!]
   \caption{Unified Spectral Bundling}
   \label{alg:spectral_bundle}
\begin{algorithmic}[1]
   \STATE \textbf{Input:} Problem specification $(C, \cA, b, \cI)$, parameters $(\alpha, \rho, \beta, k_c, k_p)$,
   and initialization of $X_0 = \bar{X}_0$, $y_0$, and orthonormal $V_0$ to fully parameterize $\hat{f}_{0}$ and $\widehat{\cX}_0$.
   \STATE \textbf{Output: $X_T, y_T$} 
   \FOR{$t = 0, 1, \ldots, T$}
   \STATE $(X_{t+1}, \nu_{t+1}) \gets {\argmax\limits_{(X, \nu) \in \widehat{\cX}_t \times \mathrm{N}} } \psi_t(X, \nu)$ 
   \STATE $ \tilde{y}_{t+1} \gets y_t - \frac{1}{\rho} (b - \nu_{t+1} - \cA X_{t+1})$
   \IF{$\beta (f(y_t) - \hat{f}_t(\tilde{y}_{t+1})) \leq f(y_t) - f(\tilde{y}_{t+1})$}
   \STATE $y_{t+1} \gets \tilde{y}_{t+1}$ \algocomment{descent step\qquad}
   \ELSE
   \STATE $y_{t+1} \gets y_t$ \algocomment{null step\qquad}
   \ENDIF
   \STATE Update $\hat{f}_{t+1}$ and $\widehat{\cX}_{t+1}$ using~\eqref{eq:subspace_def},~\eqref{eq:model_def},~\eqref{eq:x_bar_def},~\eqref{eq:v_next_def}.
   \ENDFOR
   \STATE \textbf{return} $X_T, y_T$
\end{algorithmic}
\end{algorithm}

\paragraph{Feasibility of $\tilde{y}_{t+1}$.} If we substitute $\nu_{t+1} = \proj_\mathrm{N}(b - \cA X_{t+1} - \rho y_t)$ into~\eqref{eq:dual_update}, it can be seen that the candidate $\tilde{y}_{t+1}$ is always feasible  
\begin{equation*}
    \big(\tilde{y}_{t+1}\big)_\cI = \max \left\{ \big(y_t\big)_\cI - \frac{1}{\rho}\Big(b_\cI - \cA_\cI X_{t+1}\Big), 0 \right\} \geq 0,
\end{equation*}
and is also complementary to the dual slack variable $\nu_{t+1}$, i.e. $\langle \tilde{y}_{t+1}, \nu_{t+1} \rangle = 0.$
This view allows us to see that we can equivalently express the candidate iterate as
\begin{equation*}
\tilde{y}_{t+1} = \proj_\cY\left( y_t - \frac{1}{\rho}(b - \cA X_{t+1})\right).
\end{equation*}
The complete algorithm is detailed in Algorithm~\autoref{alg:spectral_bundle}.
Notice that USBS has no iteration-dependent step-sizes or parameters, making it more amenable to effectively utilize a warm-start initialization.
It is important to note that the main cost of computing $f(y_t)$ and $f(\tilde{y}_{t+1})$ is a maximum eigenvalue computation of $C - \cA\adj y_t$ and $C - \cA\adj \tilde{y}_{t+1}$, respectively.
Lastly, we compute $\hat{f}_t(\tilde{y}_{t+1})$ using a primal-dual interior point method derived in Appendix~\ref{sec:ipm_lb_spec_est}.

\paragraph{Convergence Rate.} Under any setting of the parameters, Theorem~\ref{thm:sublinear_convergence} guarantees sublinear convergence for both primal and dual problems, showing that the iterates $X_t$ and $y_t$ converge in terms of objective gap and feasilibity.

\begin{theorem}
    \label{thm:sublinear_convergence}
    Suppose strong duality holds. For any fixed $ \rho > 0, \beta \in (0, 1), k_c \geq 1, k_p \geq 0$, and $\alpha \geq 2 \mathfrak{N}(\cX_\star)$, USBS produces iterates $X_t \succeq 0$ and $y_t \in \cY$ such that for any $\varepsilon \in (0, 1]$,
    \begin{align*}
    \textrm{penalized dual optimality: }&f(y_t) - f(y_\star) \leq \varepsilon, \\
    \textrm{primal feasiblity: }& \dist(\cA X_t, \cK) \leq \sqrt{\varepsilon}, \\
    \textrm{dual feasiblity: }& \lambda_{\max}(C - \cA\adj y_t) \leq \varepsilon, \\
    \textrm{primal-dual optimality: }& | \langle b, y_t \rangle - \langle C, X_t \rangle | \leq \sqrt{\varepsilon},
    \end{align*}
    by some iteration $\cO(1 / \varepsilon^3)$. And, if strict complementarity holds, then these conditions are achieved by some iteration $\cO(1 / \varepsilon)$. Additionally, if Slater's condition holds and $y_\star$ is unique, then the approximate primal feasibility and approximate primal-dual optimality respectively improve to 
    \begin{equation*}
    \dist(\cA X_t, \cK) \leq \varepsilon \quad \textrm{and} \quad | \langle b, y_t \rangle - \langle C, X_t \rangle | \leq \varepsilon.
    \end{equation*}
\end{theorem}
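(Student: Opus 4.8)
The plan is to reduce everything to the abstract proximal bundle convergence result, Theorem~\ref{thm:proximal_bundle_convergence}, applied to the penalized dual $f$ from~\eqref{eq:pen_dual}, and then translate its two outputs — the objective gap and the size of the proximal residual — into the four problem-specific quantities. First I would verify that the spectral bundle model $\hat f_t$ meets the three requirements~\eqref{eq:minorant_cond},~\eqref{eq:obj_subgrad_cond}, and~\eqref{eq:model_subgrad_cond}: the minorant and subgradient conditions hold because $\widehat{\cX}_t$ always contains the rank-one term $\alpha v_1 v_1^\top$ built from a maximum eigenvector of $C - \cA\adj \tilde y_t$, and the model-subgradient condition follows from the construction of $\bar X_{t+1}$ in~\eqref{eq:x_bar_def}, which carries the aggregated past curvature forward. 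Since $f$ is proper, closed, convex, and attains its minimum value $d_\star = p_\star$ (via the equivalence~\eqref{eq:equiv_pen_dual}), Theorem~\ref{thm:proximal_bundle_convergence} immediately yields the penalized dual optimality bound $f(y_t) - f(y_\star) \le \varepsilon$ by iteration $\cO(1/\varepsilon^3)$, and simultaneously drives the proximal residual $s_{t+1} = \rho(y_t - \tilde y_{t+1}) = b - \nu_{t+1} - \cA X_{t+1}$ to a norm of order $\sqrt{\varepsilon}$.

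The crux is converting these two facts into the remaining three bounds. For dual feasibility I would establish an exact-penalty lower bound of the form $f(y) - f(y_\star) \ge (\alpha - \mathfrak{N}(\cX_\star))\,[\lambda_{\max}(C - \cA\adj y)]_+$, obtained by testing $C - \cA\adj y$ against a fixed primal optimum $X_\star$ (using $X_\star \succeq 0$, $\tr(X_\star) \le \mathfrak{N}(\cX_\star)$, and $\cA X_\star \in \cK$ together with $y \in \cY$). Because $\alpha \ge 2\mathfrak{N}(\cX_\star)$, the coefficient is positive, so the objective gap forces $\lambda_{\max}(C - \cA\adj y_t) \le \varepsilon$ — a bound linear in the gap, which is why dual feasibility does not incur the square root. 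For primal feasibility and the primal–dual gap I would instead exploit the projection characterizations $\tilde y_{t+1} = \proj_\cY(y_t - \tfrac1\rho(b - \cA X_{t+1}))$ and $\nu_{t+1} = \proj_{\mathrm{N}}(b - \cA X_{t+1} - \rho y_t)$, which relate the residual $b - \nu_{t+1} - \cA X_{t+1}$ coordinatewise (across the $\cI$/$\cI'$ split) to the displacement of $\cA X_{t+1}$ from its projection onto $\cK$, so that $\dist(\cA X_t, \cK)$ is controlled by the residual norm together with complementarity terms and is hence $\le \sqrt{\varepsilon}$. Decomposing $\langle b, y_t\rangle - \langle C, X_t\rangle = \langle b - \cA X_t, y_t\rangle - \langle C - \cA\adj y_t, X_t\rangle$ and bounding the first term by $\|s\|\,\|y_t\|$ (using complementarity $\langle \tilde y_{t+1}, \nu_{t+1}\rangle = 0$ and boundedness of the iterates) and the second by the eigenvalue penalty then gives the $\sqrt{\varepsilon}$ primal–dual gap.

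For the improved rates, under strict complementarity the penalized dual is \emph{sharp} near $\cY_\star$, meaning $f(y) - f_\star$ grows at least linearly in $\dist(y, \cY_\star)$; feeding this into the sharp-growth branch of Theorem~\ref{thm:proximal_bundle_convergence} upgrades the iteration count to $\cO(1/\varepsilon)$. Under Slater's condition with a unique $y_\star$, a primal nondegeneracy argument removes the square root, sharpening both the primal feasibility and the primal–dual gap to $\varepsilon$.

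I expect the main obstacle to be the translation step of the second paragraph: pinning down clean inequalities relating $\dist(\cA X_t, \cK)$ and the primal–dual gap to the residual norm while carefully tracking the constants $\alpha$, $\rho$, and the solution radii, and correctly handling the mixed equality/inequality structure through the shifted projections onto $\cK$ and $\mathrm{N}$ (the $\rho y_t$ shift prevents the residual from being exactly the infeasibility, so complementarity must be invoked). Establishing sharpness from strict complementarity is the other delicate ingredient, since it requires converting the rank condition $\rank(X_\star) + \rank(Z_\star) = n$ into a quantitative growth bound on the eigenvalue penalty.
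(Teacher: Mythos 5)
Your overall architecture coincides with the paper's: verify the three model conditions exactly as you describe, invoke Theorem~\ref{thm:proximal_bundle_convergence} (together with boundedness of iterates and subgradients, the paper's Fact~\ref{fact:bounded}) to control the penalized dual gap, then translate. Your exact-penalty bound $f(y) - f(y_\star) \geq (\alpha - \mathfrak{N}(\cX_\star))\,[\lambda_{\max}(C - \cA\adj y)]_+$ is precisely the paper's Lemma~\ref{lem:dual_feasibility}; your projection/residual control of $\dist(\cA X_t, \cK)$ is Lemma~\ref{lem:primal_feasibility} (the paper obtains $\frac{\rho}{2}\|y_{t+1} - y_t\|^2 \leq \beta^{-1}(f(y_t) - f(y_\star))$ directly from the descent test at descent steps, rather than as an output of the abstract theorem --- a cosmetic difference from your phrasing); and your decomposition of the primal--dual gap into $\langle b - \cA X_t, y_t\rangle$ and $\langle C - \cA\adj y_t, X_t\rangle$ is Lemma~\ref{lem:primal_dual_opt}.

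The genuine gap is in your strict-complementarity branch. You assert that strict complementarity makes the penalized dual \emph{sharp} (growth linear in $\dist(y, \cY_\star)$) and plan to feed this into a ``sharp-growth branch'' of Theorem~\ref{thm:proximal_bundle_convergence}. Two problems. First, the improved branch of the theorem as stated requires only \emph{quadratic} growth, $f(y) - f(y_\star) \geq \cO(\dist^2(y, \cY_\star))$; there is no sharp-growth branch to invoke. Second, and more seriously, strict complementarity does not imply sharpness: the correct tool is the H\"olderian error bound of Sturm (the paper's Lemma~\ref{lem:error_bound}), $\dist^{2^d}(y, \cY_\star) \leq \cO(f(y) - f(y_\star))$, where $d$ is the singularity degree of the dual feasibility system $\{y : C - \cA\adj y \preceq 0\} \cap \cY$. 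Strict complementarity gives only $d \leq 1$, i.e.\ quadratic growth; sharpness corresponds to $d = 0$, which is what Slater's condition buys. A region such as $\{y : y_1 \geq y_2^2\}$, expressible through a $2 \times 2$ semidefinite constraint, exhibits genuinely square-root error-bound behavior, so your stated plan to ``convert the rank condition into a quantitative [linear] growth bound'' will fail in general. The fix is immediate: quadratic growth is all the $\cO(1/\varepsilon)$ rate needs, and it is exactly what $d \leq 1$ delivers. Relatedly, your Slater-branch sketch (``a primal nondegeneracy argument removes the square root'') should make explicit where uniqueness of $y_\star$ enters: the $d = 0$ bound controls $\dist(y_t, \cY_\star)$ and $\dist(y_{t+1}, \cY_\star)$ separately, and uniqueness supplies the common anchor point so that $\|y_t - y_{t+1}\| \leq \|y_t - y_\star\| + \|y_{t+1} - y_\star\| \leq \cO(f(y_t) - f(y_\star))$; this is how the paper removes the square roots from both primal feasibility and the primal--dual gap.
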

\noindent The proof is of this theorem is given in Appendix~\ref{sec:main_thm_proof}.
\section{Scaling with Matrix Sketching}
\label{sec:sketching}

The memory required to store the primal iterates $\bar{X}_t$ (similarly $X_t$) is prohibitive to scaling SDPs to large problem instances. 
We will utilize a \emph{Nystr\"om sketch}~\cite{tropp2017fixed, gittens2013topics, halko2011finding, li2017algorithm} to track a compressed low-rank projection of the primal iterate as it evolves which at any iteration can be used to compute a provably accurate low-rank approximation of $\bar{X}_t$.

First, notice that the operations carried out by USBS do not require explicitly storing $\bar{X}_t$.
In fact, if we are not interested in the primal iterates and we only want to solve the dual problem (e.g. in the case where we want to test the feasbility of the primal problem), we only need to store $\langle C, \bar{X}_t \rangle$, $\tr(\bar{X}_t)$, and $\cA \bar{X}_t$, which can be efficiently maintained with low-rank updates to $\bar{X}_t$ (see Appendix~\ref{sec:add_sketching} for more details).
This means if we are interested in the primal iterates, our storage of $\bar{X}_t$ is independent from the operations of USBS.

Consider any iterate $\bar{X}_t \in \SS^n_+$. Let $r \in \{1, \ldots, n\}$ be a parameter that trades off accuracy for scalability.
To construct the Nystr\"om sketch, we sample a random projection matrix $\Psi \in \RR^{n \times r}$ such that $\Psi_{ij} \sim \cN(0, 1)$ are sampled i.i.d.
The projection of $\bar{X}_t$, or \emph{sketch}, can be computed  as $P_t = \bar{X}_t \Psi \in \RR^{n \times r}$.
We can efficiently maintain this sketch $P_t$ under low-rank updates made to $\bar{X}_t$, 
\begin{equation}
    \begin{aligned}
    P_{t+1} \gets &\Big(\eta_{t+1} \bar{X}_t + V_t Q_{\underline{c}} \Lambda_{\underline{c}} Q_{\underline{c}}^\top V_t^\top\Big) \Psi \\
    &= \eta_{t+1} P_t + V_t Q_{\underline{c}} \Lambda_{\underline{c}} \Big(Q_{\underline{c}}^\top V_t^\top\Psi\Big).
    \end{aligned}
\end{equation}
\vspace*{-0.5cm}

Given the sketch $P_t$ and the projection matrix $\Psi$, we can compute a rank-$r$ approximation to $\bar{X}_t$.
It is important to note that by applying this Nystr\"om sketching technique the only approximation error is in the approximation of $\bar{X}_t$, and all of the error is constant (i.e. the error does not compound over time) and comes only from the projection matrix $\Psi$ (and the choice of $r$).
See Appendix~\ref{sec:add_sketching} for more details.
\section{Experiments}
\label{sec:experiments}
We evaluate USBS on three different problem types with and without warm-starting strategies against the scalable semidefinite programming algorithm CGAL~\cite{yurtsever2019conditional, yurtsever2021scalable} (with sketching).
~\citet{yurtsever2021scalable} perform an extensive evaluation against strong SDP solvers, including SeDuMi~\cite{sturm1999using}, SDPT3~\cite{toh1999sdpt3},  Mosek~\cite{aps2019mosek}, SDPNAL+\cite{yang2015sdpnal}, and the BM factorization heuristic~\cite{burer2003nonlinear}.
They show on several applications that CGAL with sketching is by far the standard against which to compare for scalable semidefinite programming.

\begin{table*}[h!]
\begin{center}
\caption{MaxCut data instance statistics.}
\scriptsize
\begin{tabular}{lcccccccccc}
\toprule
 & \textsf{fe\_sphere}  & \textsf{hi2010} & \textsf{fe\_body} & \textsf{me2010} & \textsf{fe\_tooth} & \textsf{598a} & \textsf{144} & \textsf{auto} & \textsf{netherlands\_osm} & \textsf{333SP}\\
\midrule
$n$ & 16K & 25K & 45K & 70K & 78K & 111K & 145K & 449K & 2.2M & 3.7M \\
$\textrm{nnz}(L)$ & 115K & 149K & 372K & 405K & 983K & 1.6M &  2.3M & 7.1M &  7.1M & 25.9M\\
\bottomrule
\end{tabular}
\label{tab:maxcut_dataset_stats}
\end{center}
\end{table*}
\begin{figure*}[h!]
\vspace{-0.5cm}
\includegraphics[width=\textwidth]{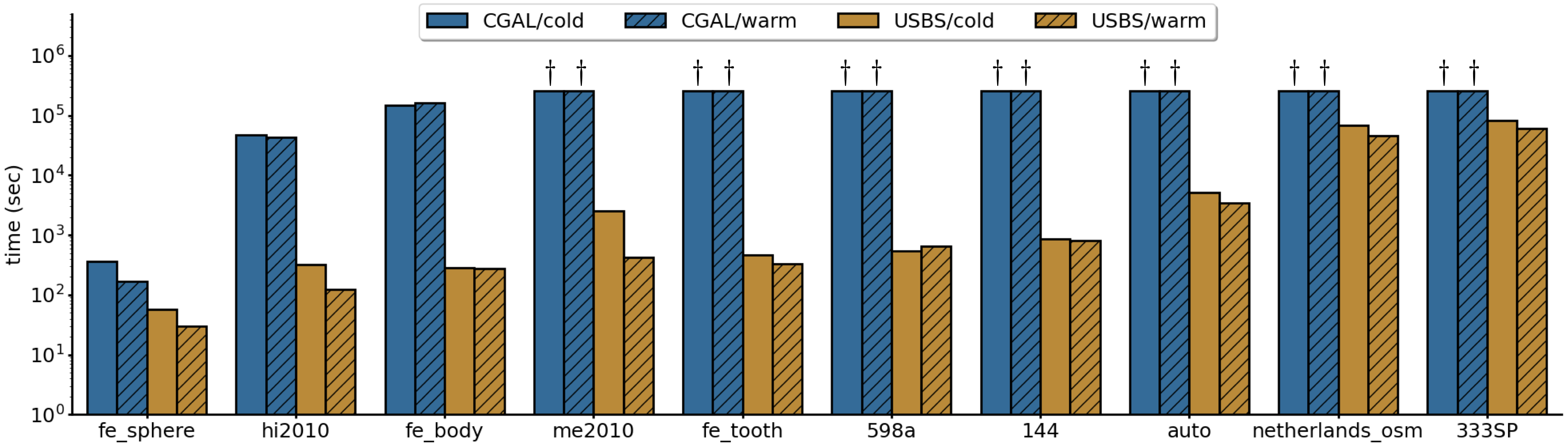}
\caption{\textbf{Convergence time (sec) to moderate relative error tolerance ($\downarrow$).} The time (in seconds) for CGAL and USBS to achieve an $\varepsilon$-approximate solution for $\varepsilon = 10^{-1}$ with and without warm-starting on 99\% of initial data on ten \texttt{DIMACS10} MaxCut instances. The bars marked with $\dagger$ indicate an $\varepsilon$-approximate solution was not achieved in 72 hours. The datasets are sorted in ascending order by $n$, ranging from 16K to 3.7M (more than $10^{13}$ decision variables for \smsf{333SP}).  Note that warm-starting generally improves convergence, but does not always (e.g. \smsf{598a}). We observe  USBS achieves an extraordinary improvement in convergence over CGAL which fails to reach an accurate solution on 7 out of 10 instances. In contrast, USBS is able to reach a solution on all of the problem instances in 28 hours or less without a warm-start initialization.}
\label{fig:maxcut_bar_chart}
\end{figure*}
We consider the primal iterate an $\varepsilon$-approximate solution if the relative primal objective suboptimality and relative infeasibility is less than $\varepsilon$, i.e.
\begin{equation}
    \label{eq:relative_error_tol}
    \frac{\langle C, X \rangle - \langle C, X_\star \rangle}{1 + \langle C, X \rangle} \leq \varepsilon \quad \textrm{and} \quad
    \frac{\dist(\cA X, \cK)}{1 + \|b\|} \leq \varepsilon.
\end{equation}
Computing the relative infeasibility is simple.
We do not usually have access to the optimal primal objective value $\langle C, X_\star \rangle$ to compute the relative primal objective suboptimality, but we can compute an upper bound. 
See~\eqref{eq:objective_subopt_upper_bound} for how we compute this upper bound.
We include further experimental details in Appendix~\ref{sec:add_expt_setup} and results in Appendix~\ref{sec:add_expt_results}.

\subsection{MaxCut}
The MaxCut problem is a fundamental combinatorial optimization problem and its SDP relaxation is a common test bed for SDP solvers.
Given an undirected graph, the MaxCut is a partitioning of the $n$ vertices into two sets such that the number of edges between the two subsets is maximized.
The MaxCut SDP relaxation~\cite{goemans1995improved} is as follows
%
%
\begin{equation}
    \label{eq:maxcut_sdp}
    \maximize \;\; \frac{1}{4} \, \tr(L X) \;\; \st \;\; \textrm{diag}(X) = \mathbf{1} \textrm{ and } X \succeq 0,
\end{equation}
where $L$ is the graph Laplacian, $\textrm{diag}(\cdot)$ extracts the diagonal of a matrix into a vector, and $\textbf{1} \in \RR^{n}$ is the all-ones vector.
In many instances of MaxCut, the graph Laplacian $L$ is sparse and the optimal solution is low-rank. 
This means that we can represent the MaxCut instance with far less than $\cO(n^2)$ memory and leverage sketching the primal matrix $X$ to improve scalability temendously.



We evaluate the CGAL and USBS with and without warm-starting on ten instances from the \texttt{DIMACS10}~\cite{dimacs10} where $n$ and the number of edges in each graph are shown in~\autoref{tab:maxcut_dataset_stats}.
We warm-start each method by dropping the last 1\% of vertices from the graph, resulting in a graph with 99\% of the vertices.
We pad the solution from the 99\%-sized problem with zeros and rescale as necessary to create the warm-start initialization.
\autoref{fig:maxcut_bar_chart} displays the time (in seconds) taken by CGAL and USBS, with and without warm-starting on each of the ten instances, where $r = 10$.



\subsection{Quadratic Assignment Problem}

The quadratic assignment problem (QAP) is a very difficult but fundamental class of combinatorial optimization problems containing the traveling salesman problem, max-clique, bandwidth problems, and facilities location problems among others~\cite{loiola2007survey}. 
SDP relaxations have been shown to facilitate finding good solutions to large QAPs~\cite{zhao1998semidefinite}. 
For any QAP instance, the goal is to optimize an assignment matrix $\Pi$ which aligns a weight matrix $W \in \SS^\texttt{n}$ and distance matrix $D \in \SS^\texttt{n}$. 
The number of $\smtt{n} \times \smtt{n}$ assignment matrices is $\smtt{n}!$, so a brute-force search becomes quickly intractable as \smtt{n} grows.
Generally, QAP instances with $\smtt{n} > 30$ are intractable to solve exactly.
 
There are many SDP relaxtions for QAPs, but we consider the one presented by~\cite{yurtsever2021scalable} and inspired by~\cite{huang2014scalable, bravo2018semidefinite} which is 
\begin{equation}
    \label{eq:qap_sdp}
    \begin{aligned}
        &\minimize  \;\, \tr \big( (D \otimes W) \, \smsf{Y} \big) \\
        &\;\st \;\;\, \tr_1(\smsf{Y}) = I, \; \tr_2(\smsf{Y}) = I, \; \mathcal{G}(\smsf{Y}) \geq 0, \\
        &\qquad\;\; \textrm{vec}(\smsf{B}) = \textrm{diag}(\smsf{Y}), \; \smsf{B} \textbf{1} = \textbf{1}, \; \textbf{1}^\top \smsf{B} = \textbf{1}^\top, \; \smsf{B} \geq 0, \\
        &\qquad\;\; X := \begin{bmatrix}
            1 & \textrm{vec}(\smsf{B})^\top \\
            \textrm{vec}(\smsf{B}) & \smsf{Y}
        \end{bmatrix} \succeq 0, \; \tr(\smsf{Y}) = \smtt{n}
    \end{aligned}
\end{equation}
where $\otimes$ denotes the Kronecker product, $\tr_1(\cdot)$ and $\tr_2(\cdot)$ denote the partial trace over the first and second systems of Kronecker product respectively, $\mathcal{G}(\smsf{Y})$ extracts the entries of $\smsf{Y}$ corresponding to the nonzero entries of $D \otimes W$, and $\textrm{vec}(\cdot)$ stacks the columns of a matrix one on top of the other to form a vector.
In many cases, one of $D$ or $W$ is sparse (i.e.~$\mathcal{O}(n)$ nonzero entries) resulting in $\mathcal{O}(\smtt{n}^3)$ total constraints for the SDP.

The primal variable $X$ has dimension $(\smtt{n}^2 + 1) \times (\smtt{n}^2 + 1)$ and as a result the SDP relaxation has $\mathcal{O}(\smtt{n}^4)$ decision variables.
Given the aggressive growth in complexity, most SDP based algorithms have difficulty operating on instances where $\smtt{n} > 50$.
To reduce the number of decision variables, we sketch $X$ with $r = \smtt{n}$, resulting in $\mathcal{O}(\smtt{n}^3)$ entries in the sketch and the same number of constraints in most natural instances.
We show that this enables CGAL and USBS to scale to instances where $\smtt{n} = 198$ (1.5 billion decision variables).

%
We evaluate CGAL and USBS on select large instances from \texttt{QAPLIB}~\cite{burkard1997qaplib} and \texttt{TSPLIB}~\cite{reinelt1995tsplib95} ranging in size from 136 to 198.
Provided with each of these QAP instances is the known optimum.
For many instances, we find that the quality of the permutation matrix produced by the rounding procedure does not entirely correlate with the quality of the iterates with respect to the SDP~\eqref{eq:qap_sdp}.
Thus, we apply the rounding procedure at every iteration of both CGAL and USBS.
Our metric for evaluation is \smtt{relative gap} which is computed as follows
\begin{equation*}
    \smtt{relative gap} = \frac{\smtt{upper bound obtained} - \smtt{optimum}}{\smtt{optimum}}.
\end{equation*}
We report the lowest value so far of \smtt{relative gap} as \smtt{best relative gap}.
CGAL~\cite{yurtsever2021scalable} is shown to obtain significantly smaller \smtt{best relative gap} than CSDP~\cite{bravo2018semidefinite} and PATH~\cite{zaslavskiy2008path} on most instances, and thus, is a strong baseline for achieving high quality approximate solutions.

To create a warm-start initialization, we create a slightly smaller QAP by dropping the final row and column of both $D$ and $W$ (i.e. solve a size $\smtt{n} - 1$ subproblem of the original instance).
We use the solution to slightly smaller problem to set a warm-start initialization for the original problem, rescaling and padding with zeros where necessary.
We stopped optimizing after one hour. When warm-starting, we optimize the slightly smaller problem for one hour and then optimize the original problem for one hour.
\autoref{fig:qap_pr144_relative_gap} plots the \smtt{relative gap} and \smtt{best relative gap} against time for CGAL and USBS both with and without warm-starting for one \texttt{TSPLIB} instance.


\begin{figure}[t!]
\subfloat{%
  \includegraphics[width=0.48\textwidth]{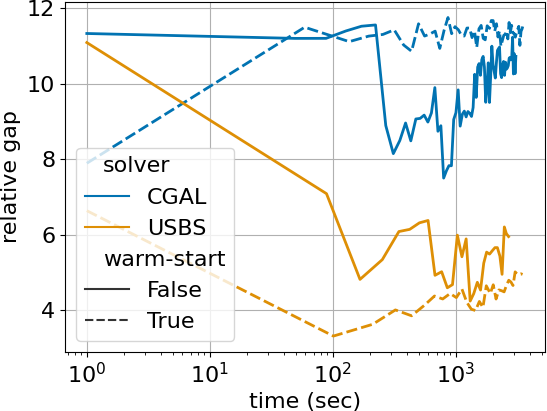}
}
\hfill
\subfloat{%
  \includegraphics[width=0.48\textwidth]{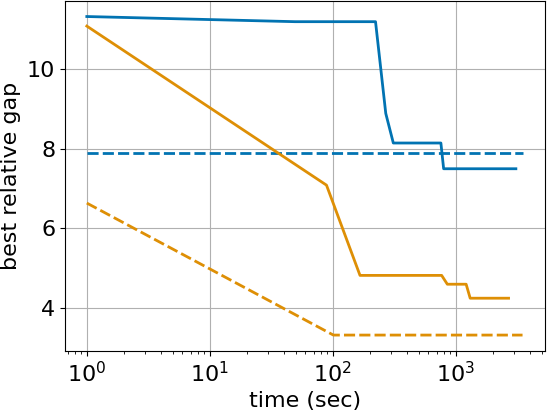}
}
\caption{\textbf{\smtt{relative gap} ($\downarrow$) vs.~time.} We plot the \smtt{relative gap} (y-axis, left) and \smtt{best relative gap} (y-axis, right) against time in seconds (x-axis) for one QAP instance, \smsf{pr144}, from \texttt{TSPLIB} ($\smtt{n} = 144$) over one hour of optimization. We observe that for both algorithms the best rounded solution is found early in optimization. We observe that USBS is able to leverage a warm-start initialization.}
\label{fig:qap_pr144_relative_gap}
\vskip -0.1in
\end{figure}

%

\subsection{Interactive Entity Resolution with $\exists$-constraints}
\citet{pmlr-v162-angell22a} introduced a novel SDP relaxation of a combinatorial optimization problem that arises in automatic knowledge base construction ~\citep{uhlen2010towards,krishnamurthy2015learning,iv2022fruit}. 
They consider the problem of interactive entity resolution with user feedback to correct predictions made by a machine learning algorithm.
Specifically, they introduce \emph{\econstraints\!\!}, a new paradigm of interactive feedback for correcting entity resolution predictions and presented a novel SDP relaxation as part of a heuristic algorithm for satisfying \econstraints in the predicted entity resolution decisions.
This novel form of feedback allows users to specify constraints stating the existence of an entity with and without certain features.
As each \econstraint is added to the optimization problem additional constraints are added to the SDP relaxation.
We use the solution to the SDP without the newest \econstraint to warm-start the solver with the new \econstraint added.
See Appendix~\ref{sec:add_ecc} for more details.

We evaluate the performance of the CGAL and USBS with and without warm-starting on the same three author coreference datasets used in~\cite{pmlr-v162-angell22a}.
In these datasets, each mention is an author-paper pair and the goal is to identify which author-paper pairs refer to the same author.
%
We simulate \econstraint generation using the same oracle implemented in~\cite{pmlr-v162-angell22a}.
\autoref{fig:ecc_cumulative_solve_time} shows the cumulative SDP solve time to reach an $\varepsilon$-approximate solution against the number of \econstraints for each of the three datasets until the perfect entity resolution decsions were predicted by the inference algorithm.

\begin{table}[t]
\begin{center}
\caption{Author coreference dataset statistics.}
\small
\begin{tabular}{lccccc}
\toprule
 & \# mentions & \# blocks & \# clusters & \textrm{nnz}($W$) & \# features \\
\midrule
\bf PubMed & 315 & 5 & 34 & 3,973 & 14,093\\
\bf QIAN & 410 & 38 & 77 & 5,158 & 10,366\\
 \bf SCAD-zbMATH\qquad & 1,196 & 120 & 166 & 18,608 & 8,203\\
\bottomrule
\end{tabular}
\label{tab:ecc_dataset_stats}
\end{center}
\end{table}
\begin{figure*}[t!]
\subfloat[PubMed\label{subfig:pubmed}]{%
  \includegraphics[width=0.32\textwidth]{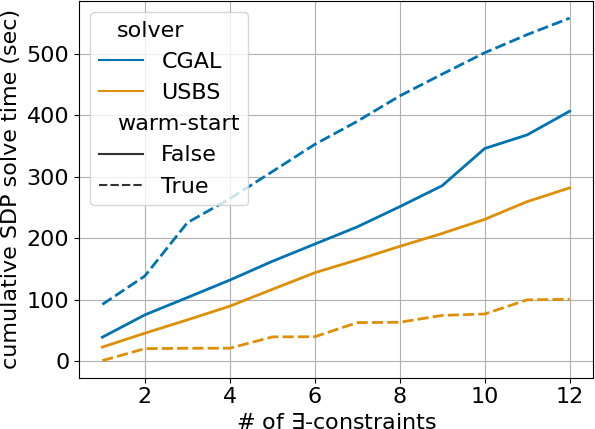}
}
\hfill
\subfloat[QIAN\label{subfig:rand_idx_qian}]{%
  \includegraphics[width=0.32\textwidth]{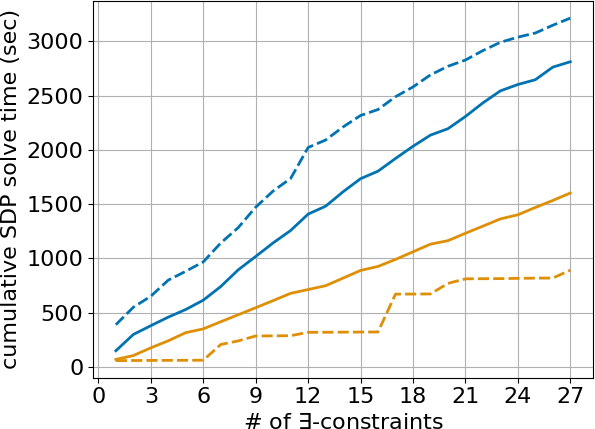}
}
\hfill
\subfloat[SCAD-zbMATH\label{subfig:rand_idx_zbmath}]{%
  \includegraphics[width=0.32\textwidth]{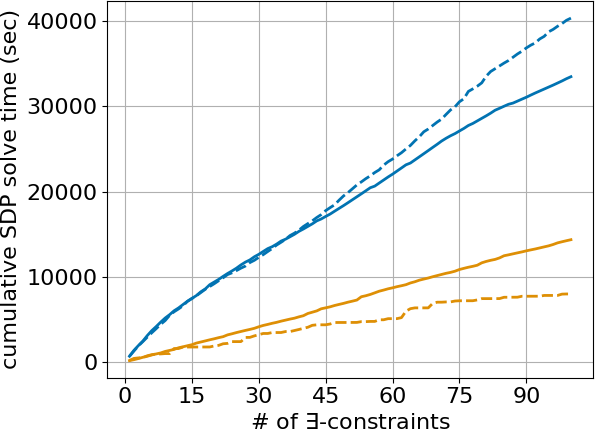}
}
\caption{\textbf{Cumulative SDP solve time ($\downarrow$) vs.~number of \econstraints\!\!}. In each plot (one for each author coreference dataset), the $x$-axis is the number of \econstraints generated one after the other over time and the $y$-axis is the the cumulative solve time (in seconds) for each SDP solver to reach a relative suboptimality, relative infeasibility, and max absolute infeasibility (i.e. $\|\cA X - \proj_\cK(\cA X)\|_\infty \leq \varepsilon$) of $\varepsilon = 10^{-1}$. When warm-starting, both solvers are initialized using the solution from the previous SDP (with one less \econstraint\!\!). \econstraints are generated until the perfect clustering is predicted. We observe that USBS is able to leverage a warm-start initialization. In addition, we observe that the performance gap between USBS and CGAL grows as the problem size grows. See \autoref{tab:ecc_dataset_stats} for dataset sizes and details.}
\label{fig:ecc_cumulative_solve_time}
\end{figure*}
\section{Related Work}

\paragraph{Efficient SDP solving.}
Due to their wide-spread applicability and importance, methods for solving SDPs efficiently have been extensively studied in the literature \cite{todd2001semidefinite,nesterov1989self,nesterov1994interior,alizadeh1995interior, boyd2011distributed,friedlander2016low, yurtsever2019conditional, yurtsever2015universal,ding2021simplicity}.
Interior point methods~\cite{helmberg1994interior,nesterov1994interior,alizadeh1995interior} are by far the most widely-used and well-known method for solving SDPs.
Interior point methods enjoy quadratic convergence, but have incredibly high per-iteration complexity, and thus, are impractical for instances having more than a few hundred variables.
ADMM~\cite{boyd2011distributed} and splitting-based methods~\cite{scs} have also been applied to solving SDPs.
These methods are generally more applicable than interior point methods, but our still restricted do to the computationally expensive full eigenvalue decomposition.
First-order methods such as conditional gradient augmented Lagrangian~\cite{yurtsever2017sketchy}, primal-dual subgradient algorithms~\cite{yurtsever2015universal}, the matrix multiplicative weight method~\cite{arora2005fast,tsuda2005matrix}, and the mirror-prox algorithm with the quantum entropy mirror map~\cite{nemirovski2004prox} have by far the lowest per-iteration complexity among all of the methods for solving SDPs.
This lower per-iteration complexity leads to slower convergence rate for these first-order methods.

\textbf{Spectral bundle methods.}
\citet{helmberg2000spectral} were the first to introduce spectral bundle methods for equality-constrained SDPs. The biggest difference between the original spectral bundle method and our approach is the model where $k_c = 1$ is fixed and $k_p$ is chosen by the user. We find that larger $k_c$ provides better convergence in general and $k_p$ is not very helpful (and sometimes harmful) to convergence. Several other algorithmic variants of spectral bundle methods exist including solving inequality-constrained SDPs~\cite{helmberg2002spectral}, trust region-based methods for nonconvex eigenvalue optimization~\cite{apkarian2008trust}, and incorporating second-order information to speed up empirical convergence~\cite{helmberg2014spectral}.
\citet{ding2023revisiting} present a spectral bundle method most similar to ours for solving equality-constrained SDPs. They prove an impressive set of convergence rates under various assumptions, including linear convergence if certain strong assumptions hold. 
The biggest difference between previous work of spectral bundle methods and USBS is the fact that USBS has more flexibility and scalability as compared with other methods both in terms of the constraints USBS supports in the SDP and the model the user can specify.
In addition, this work addresses many of the practical implementation questions enabling scalability to instances with multiple orders of magnitude more variables.

\textbf{Memory-efficient semidefinte programming.}
The most widely-known memory efficient approach to semidefinite programming is the Burer–Monteiro (BM) factorization heuristic~\cite{burer2003nonlinear}.
Several optimization techniques have been used to optimize the low-rank factorized problem~\cite{wang2017mixing,boumal2014manopt,burer2005local,kulis2007fast,sahin2019inexact,souto2022exploiting}.
\citet{ding2021optimal} present an optimal storage approach to solving SDPs by first approximately solving the dual problem and then using the dual slack matrix to solve the primal problem efficiently.
\citet{yurtsever2021scalable} implement the conditional gradient augmented Lagrangian method~\cite{yurtsever2019conditional} with the matrix sketching technique in~\cite{yurtsever2017sketchy} which forms a strong state-of-the-art method for scalably solving general SDPs.
\citet{ding2023revisiting} implements a spectral bundle method for solving equality constrained SDPs with the same matrix sketching technique~\cite{yurtsever2017sketchy}.
\section{Conclusion}

In this work, we presented a practical spectral bundle method for solving large SDPs with both equality and inequality constraints.
We proved non-asymptotic convergence rates under standard assumptions for USBS.
We showed empirically that USBS is fast and scalable on  practical SDP instances.
Additionally, we showed that USBS can more reliably leverage a warm-start initialization to accelerate convergence. Lastly, we make our standalone implementation in pure JAX available for wide-spread use.


\section*{Acknowledgements}
This work is funded in part by the Center for Data Science and the Center for Intelligent Information Retrieval at the University of Massachusetts Amherst, and in part by the National Science Foundation under grants IIS-1763618 and IIS-1922090, and in part by the Chan Zuckerberg Initiative under the project Scientific Knowledge Base Construction, and in part by IBM
Research AI through the AI Horizons Network.
The work reported here was performed in part by the Center for Data Science and the Center for Intelligent Information Retrieval, and in part using high performance computing equipment obtained under a grant from the Collaborative R\&D Fund managed by the Massachusetts Technology Collaborative.
Rico Angell is partially supported by the National Science Foundation Graduate Research Fellowship under grant 1938059.
Any opinions, findings and conclusions or recommendations expressed in this material are those of the authors and do not necessarily reflect those of the sponsor(s).

\bibliographystyle{plainnat}
\bibliography{references}

\begin{thebibliography}{109}
\providecommand{\natexlab}[1]{#1}
\providecommand{\url}[1]{\texttt{#1}}
\expandafter\ifx\csname urlstyle\endcsname\relax
  \providecommand{\doi}[1]{doi: #1}\else
  \providecommand{\doi}{doi: \begingroup \urlstyle{rm}\Url}\fi

\bibitem[Agarwal et~al.(2021)Agarwal, Angell, Monath, and McCallum]{agarwal2021entity}
Dhruv Agarwal, Rico Angell, Nicholas Monath, and Andrew McCallum.
\newblock Entity linking and discovery via arborescence-based supervised clustering.
\newblock \emph{arXiv preprint arXiv:2109.01242}, 2021.

\bibitem[Agarwal et~al.(2022)Agarwal, Angell, Monath, and McCallum]{agarwal2022entity}
Dhruv Agarwal, Rico Angell, Nicholas Monath, and Andrew McCallum.
\newblock Entity linking via explicit mention-mention coreference modeling.
\newblock In \emph{Proceedings of the 2022 Conference of the North American Chapter of the Association for Computational Linguistics: Human Language Technologies}, 2022.

\bibitem[Alizadeh(1995)]{alizadeh1995interior}
Farid Alizadeh.
\newblock Interior point methods in semidefinite programming with applications to combinatorial optimization.
\newblock \emph{SIAM journal on Optimization}, 1995.

\bibitem[Alizadeh et~al.(1997)Alizadeh, Haeberly, and Overton]{alizadeh1997complementarity}
Farid Alizadeh, Jean-Pierre~A Haeberly, and Michael~L Overton.
\newblock Complementarity and nondegeneracy in semidefinite programming.
\newblock \emph{Mathematical programming}, 1997.

\bibitem[Alizadeh et~al.(1998)Alizadeh, Haeberly, and Overton]{alizadeh1998primal}
Farid Alizadeh, Jean-Pierre~A Haeberly, and Michael~L Overton.
\newblock Primal-dual interior-point methods for semidefinite programming: convergence rates, stability and numerical results.
\newblock \emph{SIAM Journal on Optimization}, 1998.

\bibitem[Angell et~al.(2020)Angell, Monath, Mohan, Yadav, and McCallum]{angell2020clustering}
Rico Angell, Nicholas Monath, Sunil Mohan, Nishant Yadav, and Andrew McCallum.
\newblock Clustering-based inference for biomedical entity linking.
\newblock \emph{arXiv preprint arXiv:2010.11253}, 2020.

\bibitem[Angell et~al.(2022)Angell, Monath, Yadav, and Mccallum]{pmlr-v162-angell22a}
Rico Angell, Nicholas Monath, Nishant Yadav, and Andrew Mccallum.
\newblock Interactive correlation clustering with existential cluster constraints.
\newblock In \emph{Proceedings of the 39th International Conference on Machine Learning}, 2022.

\bibitem[Apkarian et~al.(2008)Apkarian, Noll, and Prot]{apkarian2008trust}
Pierre Apkarian, Dominikus Noll, and Olivier Prot.
\newblock A trust region spectral bundle method for nonconvex eigenvalue optimization.
\newblock \emph{SIAM Journal on Optimization}, 2008.

\bibitem[ApS(2019)]{aps2019mosek}
Mosek ApS.
\newblock Mosek optimization toolbox for matlab.
\newblock \emph{User’s Guide and Reference Manual, Version}, 2019.

\bibitem[Arora et~al.(2005)Arora, Hazan, and Kale]{arora2005fast}
Sanjeev Arora, Elad Hazan, and Satyen Kale.
\newblock Fast algorithms for approximate semidefinite programming using the multiplicative weights update method.
\newblock In \emph{46th Annual IEEE Symposium on Foundations of Computer Science (FOCS'05)}, 2005.

\bibitem[Bader et~al.()Bader, Meyerhenke, Sanders, and Wagner]{dimacs10}
David~A. Bader, Henning Meyerhenke, Peter Sanders, and Dorothea Wagner.
\newblock Graph partitioning and graph clustering.
\newblock \emph{10th DIMACS Implementation Challenge Workshop. February 13-14, 2012.}
\newblock URL \url{https://www.cise.ufl.edu/research/sparse/matrices/DIMACS10/index.html}.

\bibitem[Bae et~al.(2020)Bae, Helldin, Riveiro, Nowaczyk, Bouguelia, and Falkman]{bae2020interactive}
Juhee Bae, Tove Helldin, Maria Riveiro, S{\l}awomir Nowaczyk, Mohamed-Rafik Bouguelia, and G{\"o}ran Falkman.
\newblock Interactive clustering: A comprehensive review.
\newblock \emph{ACM Computing Surveys (CSUR)}, 2020.

\bibitem[Basu et~al.(2010)Basu, Fisher, Drucker, and Lu]{basu2010assisting}
Sumit Basu, Danyel Fisher, Steven Drucker, and Hao Lu.
\newblock Assisting users with clustering tasks by combining metric learning and classification.
\newblock In \emph{Proceedings of the AAAI Conference on Artificial Intelligence}, volume~24, 2010.

\bibitem[Beck(2015)]{beck2015convergence}
Amir Beck.
\newblock On the convergence of alternating minimization for convex programming with applications to iteratively reweighted least squares and decomposition schemes.
\newblock \emph{SIAM Journal on Optimization}, 2015.

\bibitem[Bertsimas(1995)]{bertsimas1995achievable}
Dimitris Bertsimas.
\newblock The achievable region method in the optimal control of queueing systems; formulations, bounds and policies.
\newblock \emph{Queueing systems}, 1995.

\bibitem[Binette and Steorts(2022)]{binette2022almost}
Olivier Binette and Rebecca~C Steorts.
\newblock ({A}lmost) all of entity resolution.
\newblock \emph{Science Advances}, 2022.

\bibitem[Boumal et~al.(2014)Boumal, Mishra, Absil, and Sepulchre]{boumal2014manopt}
Nicolas Boumal, Bamdev Mishra, P-A Absil, and Rodolphe Sepulchre.
\newblock Manopt, a matlab toolbox for optimization on manifolds.
\newblock \emph{The Journal of Machine Learning Research}, 2014.

\bibitem[Boyd et~al.(2011)Boyd, Parikh, Chu, Peleato, Eckstein, et~al.]{boyd2011distributed}
Stephen Boyd, Neal Parikh, Eric Chu, Borja Peleato, Jonathan Eckstein, et~al.
\newblock Distributed optimization and statistical learning via the alternating direction method of multipliers.
\newblock \emph{Foundations and Trends{\textregistered} in Machine learning}, 2011.

\bibitem[Bradbury et~al.(2018)Bradbury, Frostig, Hawkins, Johnson, Leary, Maclaurin, Necula, Paszke, VanderPlas, Wanderman-Milne, et~al.]{bradbury2018jax}
James Bradbury, Roy Frostig, Peter Hawkins, Matthew~James Johnson, Chris Leary, Dougal Maclaurin, George Necula, Adam Paszke, Jake VanderPlas, Skye Wanderman-Milne, et~al.
\newblock Jax: composable transformations of python+ numpy programs.
\newblock 2018.

\bibitem[Bravo~Ferreira et~al.(2018)Bravo~Ferreira, Khoo, and Singer]{bravo2018semidefinite}
Jos{\'e}~FS Bravo~Ferreira, Yuehaw Khoo, and Amit Singer.
\newblock Semidefinite programming approach for the quadratic assignment problem with a sparse graph.
\newblock \emph{Computational Optimization and Applications}, 2018.

\bibitem[Burer and Monteiro(2003)]{burer2003nonlinear}
Samuel Burer and Renato~DC Monteiro.
\newblock A nonlinear programming algorithm for solving semidefinite programs via low-rank factorization.
\newblock \emph{Mathematical programming}, 2003.

\bibitem[Burer and Monteiro(2005)]{burer2005local}
Samuel Burer and Renato~DC Monteiro.
\newblock Local minima and convergence in low-rank semidefinite programming.
\newblock \emph{Mathematical programming}, 2005.

\bibitem[Burkard et~al.(1997)Burkard, Karisch, and Rendl]{burkard1997qaplib}
Rainer~E Burkard, Stefan~E Karisch, and Franz Rendl.
\newblock Qaplib--a quadratic assignment problem library.
\newblock \emph{Journal of Global optimization}, 1997.

\bibitem[Chuang and Hsu(2014)]{chuang2014human}
Jason Chuang and Daniel~J Hsu.
\newblock Human-centered interactive clustering for data analysis.
\newblock In \emph{Conference on Neural Information Processing Systems (NIPS). Workshop on Human-Propelled Machine Learning}, 2014.

\bibitem[Coden et~al.(2017)Coden, Danilevsky, Gruhl, Kato, and Nagarajan]{coden2017method}
Anni Coden, Marina Danilevsky, Daniel Gruhl, Linda Kato, and Meena Nagarajan.
\newblock A method to accelerate human in the loop clustering.
\newblock In \emph{Proceedings of the 2017 SIAM International Conference on Data Mining}, pages 237--245. SIAM, 2017.

\bibitem[Dathathri et~al.(2020)Dathathri, Dvijotham, Kurakin, Raghunathan, Uesato, Bunel, Shankar, Steinhardt, Goodfellow, Liang, et~al.]{dathathri2020enabling}
Sumanth Dathathri, Krishnamurthy Dvijotham, Alexey Kurakin, Aditi Raghunathan, Jonathan Uesato, Rudy~R Bunel, Shreya Shankar, Jacob Steinhardt, Ian Goodfellow, Percy~S Liang, et~al.
\newblock Enabling certification of verification-agnostic networks via memory-efficient semidefinite programming.
\newblock \emph{Advances in Neural Information Processing Systems}, 2020.

\bibitem[D{\'\i}az and Grimmer(2023)]{diaz2023optimal}
Mateo D{\'\i}az and Benjamin Grimmer.
\newblock Optimal convergence rates for the proximal bundle method.
\newblock \emph{SIAM Journal on Optimization}, 2023.

\bibitem[Ding and Grimmer(2023)]{ding2023revisiting}
Lijun Ding and Benjamin Grimmer.
\newblock Revisiting spectral bundle methods: Primal-dual (sub) linear convergence rates.
\newblock \emph{SIAM Journal on Optimization}, 2023.

\bibitem[Ding and Udell(2021)]{ding2021simplicity}
Lijun Ding and Madeleine Udell.
\newblock On the simplicity and conditioning of low rank semidefinite programs.
\newblock \emph{SIAM Journal on Optimization}, 2021.

\bibitem[Ding et~al.(2021)Ding, Yurtsever, Cevher, Tropp, and Udell]{ding2021optimal}
Lijun Ding, Alp Yurtsever, Volkan Cevher, Joel~A Tropp, and Madeleine Udell.
\newblock An optimal-storage approach to semidefinite programming using approximate complementarity.
\newblock \emph{SIAM Journal on Optimization}, 2021.

\bibitem[Drusvyatskiy and Wolkowicz(2017)]{drusvyatskiy2017many}
Dmitriy Drusvyatskiy and Henry Wolkowicz.
\newblock The many faces of degeneracy in conic optimization.
\newblock \emph{Foundations and Trends{\textregistered} in Optimization}, 2017.

\bibitem[Dubey et~al.(2010)Dubey, Bhattacharya, and Godbole]{dubey2010cluster}
Avinava Dubey, Indrajit Bhattacharya, and Shantanu Godbole.
\newblock A cluster-level semi-supervision model for interactive clustering.
\newblock In \emph{Joint European Conference on Machine Learning and Knowledge Discovery in Databases}. Springer, 2010.

\bibitem[Feltenmark and Kiwiel(2000)]{feltenmark2000dual}
Stefan Feltenmark and Krzysztof~C Kiwiel.
\newblock Dual applications of proximal bundle methods, including lagrangian relaxation of nonconvex problems.
\newblock \emph{SIAM Journal on Optimization}, 2000.

\bibitem[Friedlander and Macedo(2016)]{friedlander2016low}
Michael~P Friedlander and Ives Macedo.
\newblock Low-rank spectral optimization via gauge duality.
\newblock \emph{SIAM Journal on Scientific Computing}, 2016.

\bibitem[Frome et~al.(2007)Frome, Singer, Sha, and Malik]{frome2007learning}
Andrea Frome, Yoram Singer, Fei Sha, and Jitendra Malik.
\newblock Learning globally-consistent local distance functions for shape-based image retrieval and classification.
\newblock In \emph{2007 IEEE 11th International Conference on Computer Vision}, 2007.

\bibitem[Frostig et~al.(2018)Frostig, Johnson, and Leary]{frostig2018compiling}
Roy Frostig, Matthew~James Johnson, and Chris Leary.
\newblock Compiling machine learning programs via high-level tracing.
\newblock \emph{Systems for Machine Learning}, 2018.

\bibitem[Gittens(2013)]{gittens2013topics}
Alex Gittens.
\newblock \emph{Topics in randomized numerical linear algebra}.
\newblock California Institute of Technology, 2013.

\bibitem[Goemans and Williamson(1995)]{goemans1995improved}
Michel~X Goemans and David~P Williamson.
\newblock Improved approximation algorithms for maximum cut and satisfiability problems using semidefinite programming.
\newblock \emph{Journal of the ACM (JACM)}, 1995.

\bibitem[Halko et~al.(2011)Halko, Martinsson, and Tropp]{halko2011finding}
Nathan Halko, Per-Gunnar Martinsson, and Joel~A Tropp.
\newblock Finding structure with randomness: Probabilistic algorithms for constructing approximate matrix decompositions.
\newblock \emph{SIAM review}, 2011.

\bibitem[Helmberg(1994)]{helmberg1994interior}
Christoph Helmberg.
\newblock An interior point method for semidefinite programming and max-cut bounds.
\newblock 1994.

\bibitem[Helmberg and Kiwiel(2002)]{helmberg2002spectral}
Christoph Helmberg and Krzysztof~C Kiwiel.
\newblock A spectral bundle method with bounds.
\newblock \emph{Mathematical Programming}, 2002.

\bibitem[Helmberg and Rendl(2000)]{helmberg2000spectral}
Christoph Helmberg and Franz Rendl.
\newblock A spectral bundle method for semidefinite programming.
\newblock \emph{SIAM Journal on Optimization}, 2000.

\bibitem[Helmberg et~al.(2014)Helmberg, Overton, and Rendl]{helmberg2014spectral}
Christoph Helmberg, Michael~L Overton, and Franz Rendl.
\newblock The spectral bundle method with second-order information.
\newblock \emph{Optimization Methods and Software}, 2014.

\bibitem[Hern{\'a}ndez et~al.(2007)Hern{\'a}ndez, Rom{\'a}n, Tom{\'a}s, and Vidal]{hernandez2007krylov}
Vicente Hern{\'a}ndez, Jose~E Rom{\'a}n, Andr{\'e}s Tom{\'a}s, and Vicente Vidal.
\newblock Krylov-schur methods in slepc.
\newblock \emph{Universitat Politecnica de Valencia, Tech. Rep. STR-7}, 2007.

\bibitem[Huang et~al.(2014)Huang, Chen, and Guibas]{huang2014scalable}
Qixing Huang, Yuxin Chen, and Leonidas Guibas.
\newblock Scalable semidefinite relaxation for maximum a posterior estimation.
\newblock In \emph{International Conference on Machine Learning}. PMLR, 2014.

\bibitem[Iv et~al.(2022)Iv, Passos, Singh, and Chang]{iv2022fruit}
Robert Iv, Alexandre Passos, Sameer Singh, and Ming-Wei Chang.
\newblock Fruit: Faithfully reflecting updated information in text.
\newblock In \emph{Proceedings of the 2022 Conference of the North American Chapter of the Association for Computational Linguistics: Human Language Technologies}, 2022.

\bibitem[Jonker and Volgenant(1988)]{jonker1988shortest}
Roy Jonker and Ton Volgenant.
\newblock A shortest augmenting path algorithm for dense and sparse linear assignment problems.
\newblock In \emph{DGOR/NSOR: Papers of the 16th Annual Meeting of DGOR in Cooperation with NSOR/Vortr{\"a}ge der 16. Jahrestagung der DGOR zusammen mit der NSOR}. Springer, 1988.

\bibitem[Karp(1972)]{karp1972}
Richard~M. Karp.
\newblock \emph{Reducibility among Combinatorial Problems}, pages 85--103.
\newblock Springer US, 1972.

\bibitem[Kiwiel(2000)]{kiwiel2000efficiency}
Krzysztof~C Kiwiel.
\newblock Efficiency of proximal bundle methods.
\newblock \emph{Journal of Optimization Theory and Applications}, 2000.

\bibitem[Klein et~al.(2002)Klein, Kamvar, and Manning]{klein2002instance}
Dan Klein, Sepandar~D Kamvar, and Christopher~D Manning.
\newblock {F}rom instance-level constraints to space-level constraints: Making the most of prior knowledge in data clustering.
\newblock Technical report, 2002.

\bibitem[Krishnamurthy(2015)]{krishnamurthy2015learning}
Jayant Krishnamurthy.
\newblock Learning to understand natural language with less human effort.
\newblock Technical report, CARNEGIE-MELLON UNIV PITTSBURGH PA SCHOOL OF COMPUTER SCIENCE, 2015.

\bibitem[Kuhn(1955)]{kuhn1955hungarian}
Harold~W Kuhn.
\newblock The hungarian method for the assignment problem.
\newblock \emph{Naval research logistics quarterly}, 1955.

\bibitem[Kulis et~al.(2007)Kulis, Surendran, and Platt]{kulis2007fast}
Brian Kulis, Arun~C Surendran, and John~C Platt.
\newblock Fast low-rank semidefinite programming for embedding and clustering.
\newblock In \emph{Artificial Intelligence and Statistics}. PMLR, 2007.

\bibitem[Kulis et~al.(2009)Kulis, Basu, Dhillon, and Mooney]{kulis2009semi}
Brian Kulis, Sugato Basu, Inderjit Dhillon, and Raymond Mooney.
\newblock Semi-supervised graph clustering: a kernel approach.
\newblock \emph{Machine learning}, 2009.

\bibitem[Lemarechal et~al.(1981)Lemarechal, Strodiot, and Bihain]{lemarechal1981bundle}
Claude Lemarechal, Jean-Jacques Strodiot, and Andr{\'e} Bihain.
\newblock On a bundle algorithm for nonsmooth optimization.
\newblock In \emph{Nonlinear programming 4}. Elsevier, 1981.

\bibitem[Li et~al.(2017)Li, Linderman, Szlam, Stanton, Kluger, and Tygert]{li2017algorithm}
Huamin Li, George~C Linderman, Arthur Szlam, Kelly~P Stanton, Yuval Kluger, and Mark Tygert.
\newblock Algorithm 971: An implementation of a randomized algorithm for principal component analysis.
\newblock \emph{ACM Transactions on Mathematical Software (TOMS)}, 2017.

\bibitem[Li et~al.(2008)Li, Liu, and Tang]{li2008pairwise}
Zhenguo Li, Jianzhuang Liu, and Xiaoou Tang.
\newblock Pairwise constraint propagation by semidefinite programming for semi-supervised classification.
\newblock In \emph{Proceedings of the 25th international conference on Machine learning}, 2008.

\bibitem[Li et~al.(2009)Li, Liu, and Tang]{li2009constrained}
Zhenguo Li, Jianzhuang Liu, and Xiaoou Tang.
\newblock Constrained clustering via spectral regularization.
\newblock In \emph{2009 IEEE Conference on Computer Vision and Pattern Recognition}, 2009.

\bibitem[Loiola et~al.(2007)Loiola, De~Abreu, Boaventura-Netto, Hahn, and Querido]{loiola2007survey}
Eliane~Maria Loiola, Nair Maria~Maia De~Abreu, Paulo~Oswaldo Boaventura-Netto, Peter Hahn, and Tania Querido.
\newblock A survey for the quadratic assignment problem.
\newblock \emph{European journal of operational research}, 2007.

\bibitem[Lu and Carreira-Perpinan(2008)]{lu2008constrained}
Zhengdong Lu and Miguel~A Carreira-Perpinan.
\newblock Constrained spectral clustering through affinity propagation.
\newblock In \emph{2008 IEEE Conference on Computer Vision and Pattern Recognition}, pages 1--8. IEEE, 2008.

\bibitem[Luo and Tseng(1993)]{Luo1993ErrorBA}
Zhi-Quan Luo and Paul Tseng.
\newblock Error bounds and convergence analysis of feasible descent methods: a general approach.
\newblock \emph{Annals of Operations Research}, 1993.

\bibitem[Mazumdar and Saha(2017)]{mazumdar2017clustering}
Arya Mazumdar and Barna Saha.
\newblock Clustering with noisy queries, 2017.

\bibitem[Mifflin(1977)]{mifflin1977algorithm}
Robert Mifflin.
\newblock An algorithm for constrained optimization with semismooth functions.
\newblock \emph{Mathematics of Operations Research}, 1977.

\bibitem[Mirsky(1960)]{mirsky1960symmetric}
Leon Mirsky.
\newblock Symmetric gauge functions and unitarily invariant norms.
\newblock \emph{The quarterly journal of mathematics}, 1960.

\bibitem[Munkres(1957)]{munkres1957algorithms}
James Munkres.
\newblock Algorithms for the assignment and transportation problems.
\newblock \emph{Journal of the society for industrial and applied mathematics}, 1957.

\bibitem[Nemirovski(2004)]{nemirovski2004prox}
Arkadi Nemirovski.
\newblock Prox-method with rate of convergence o (1/t) for variational inequalities with lipschitz continuous monotone operators and smooth convex-concave saddle point problems.
\newblock \emph{SIAM Journal on Optimization}, 2004.

\bibitem[Nesterov(1989)]{nesterov1989self}
Ju~E Nesterov.
\newblock Self-concordant functions and polynomial-time methods in convex programming.
\newblock \emph{Report, Central Economic and Mathematic Institute, USSR Acad. Sci}, 1989.

\bibitem[Nesterov and Nemirovskii(1994)]{nesterov1994interior}
Yurii Nesterov and Arkadii Nemirovskii.
\newblock \emph{Interior-point polynomial algorithms in convex programming}.
\newblock SIAM, 1994.

\bibitem[O'Donoghue et~al.(2021)O'Donoghue, Chu, Parikh, and Boyd]{scs}
Brendan O'Donoghue, Eric Chu, Neal Parikh, and Stephen Boyd.
\newblock {SCS}: {S}plitting {C}onic {S}olver, version 3.1.1.
\newblock \url{https://github.com/cvxgrp/scs}, November 2021.

\bibitem[Overton(1992)]{overton1992large}
Michael~L Overton.
\newblock Large-scale optimization of eigenvalues.
\newblock \emph{SIAM Journal on Optimization}, 1992.

\bibitem[Raghunathan et~al.(2018)Raghunathan, Steinhardt, and Liang]{raghunathan2018semidefinite}
Aditi Raghunathan, Jacob Steinhardt, and Percy~S Liang.
\newblock Semidefinite relaxations for certifying robustness to adversarial examples.
\newblock \emph{Advances in neural information processing systems}, 2018.

\bibitem[Reinelt(1995)]{reinelt1995tsplib95}
Gerhard Reinelt.
\newblock Tsplib95.
\newblock \emph{Interdisziplin{\"a}res Zentrum f{\"u}r Wissenschaftliches Rechnen (IWR), Heidelberg}, 1995.
\newblock URL \url{http://comopt.ifi.uni-heidelberg.de/software/TSPLIB95/}.

\bibitem[Rosen et~al.(2019)Rosen, Carlone, Bandeira, and Leonard]{rosen2019se}
David~M Rosen, Luca Carlone, Afonso~S Bandeira, and John~J Leonard.
\newblock Se-sync: A certifiably correct algorithm for synchronization over the special euclidean group.
\newblock \emph{The International Journal of Robotics Research}, 2019.

\bibitem[Sahin et~al.(2019)Sahin, Alacaoglu, Latorre, Cevher, et~al.]{sahin2019inexact}
Mehmet~Fatih Sahin, Ahmet Alacaoglu, Fabian Latorre, Volkan Cevher, et~al.
\newblock An inexact augmented lagrangian framework for nonconvex optimization with nonlinear constraints.
\newblock \emph{Advances in Neural Information Processing Systems}, 2019.

\bibitem[Sato and Iwayama(2009)]{sato2009interactive}
Yusuke Sato and Makoto Iwayama.
\newblock Interactive constrained clustering for patent document set.
\newblock In \emph{Proceedings of the 2nd international workshop on Patent information retrieval}, pages 17--20, 2009.

\bibitem[Schacke(2004)]{schacke2004kronecker}
Kathrin Schacke.
\newblock On the kronecker product.
\newblock \emph{Master's thesis, University of Waterloo}, 2004.

\bibitem[Shental et~al.(2004)Shental, Bar-Hillel, Hertz, and Weinshall]{shental2004computing}
Noam Shental, Aharon Bar-Hillel, Tomer Hertz, and Daphna Weinshall.
\newblock Computing gaussian mixture models with em using equivalence constraints.
\newblock \emph{Advances in neural information processing systems}, 2004.

\bibitem[Souto et~al.(2022)Souto, Garcia, and Veiga]{souto2022exploiting}
Mario Souto, Joaquim~D Garcia, and {\'A}lvaro Veiga.
\newblock Exploiting low-rank structure in semidefinite programming by approximate operator splitting.
\newblock \emph{Optimization}, 2022.

\bibitem[Spielman and Srivastava(2008)]{spielman2008graph}
Daniel~A Spielman and Nikhil Srivastava.
\newblock Graph sparsification by effective resistances.
\newblock In \emph{Proceedings of the fortieth annual ACM symposium on Theory of computing}, 2008.

\bibitem[Sremac et~al.(2021)Sremac, Woerdeman, and Wolkowicz]{sremac2021error}
Stefan Sremac, Hugo~J Woerdeman, and Henry Wolkowicz.
\newblock Error bounds and singularity degree in semidefinite programming.
\newblock \emph{SIAM Journal on Optimization}, 2021.

\bibitem[Sturm(1999)]{sturm1999using}
Jos~F Sturm.
\newblock Using {SeDuMi} 1.02, a matlab toolbox for optimization over symmetric cones.
\newblock \emph{Optimization methods and software}, 1999.

\bibitem[Sturm(2000)]{sturm2000error}
Jos~F Sturm.
\newblock Error bounds for linear matrix inequalities.
\newblock \emph{SIAM Journal on Optimization}, 2000.

\bibitem[Subramanian et~al.(2021)Subramanian, King, Downey, and Feldman]{subramanian2021s2and}
Shivashankar Subramanian, Daniel King, Doug Downey, and Sergey Feldman.
\newblock {S}2{AND}: {A} {B}enchmark and {E}valuation {S}ystem for {A}uthor {N}ame {D}isambiguation.
\newblock In \emph{Proceedings of the {ACM/IEEE} Joint Conference on Digital Libraries in 2021}, 2021.

\bibitem[Todd(2001)]{todd2001semidefinite}
Michael~J Todd.
\newblock Semidefinite optimization.
\newblock \emph{Acta Numerica}, 2001.

\bibitem[Todd et~al.(1998)Todd, Toh, and T{\"u}t{\"u}nc{\"u}]{todd1998nesterov}
Michael~J Todd, Kim-Chuan Toh, and Reha~H T{\"u}t{\"u}nc{\"u}.
\newblock On the nesterov--todd direction in semidefinite programming.
\newblock \emph{SIAM Journal on Optimization}, 1998.

\bibitem[Toh et~al.(1999)Toh, Todd, and T{\"u}t{\"u}nc{\"u}]{toh1999sdpt3}
Kim-Chuan Toh, Michael~J Todd, and Reha~H T{\"u}t{\"u}nc{\"u}.
\newblock Sdpt3—a matlab software package for semidefinite programming, version 1.3.
\newblock \emph{Optimization methods and software}, 1999.

\bibitem[Tropp et~al.(2017)Tropp, Yurtsever, Udell, and Cevher]{tropp2017fixed}
Joel~A Tropp, Alp Yurtsever, Madeleine Udell, and Volkan Cevher.
\newblock Fixed-rank approximation of a positive-semidefinite matrix from streaming data.
\newblock \emph{Advances in Neural Information Processing Systems}, 2017.

\bibitem[Tsuda et~al.(2005)Tsuda, R{\"a}tsch, and Warmuth]{tsuda2005matrix}
Koji Tsuda, Gunnar R{\"a}tsch, and Manfred~K Warmuth.
\newblock Matrix exponentiated gradient updates for on-line learning and bregman projection.
\newblock \emph{Journal of Machine Learning Research}, 2005.

\bibitem[Uhlen et~al.(2010)Uhlen, Oksvold, Fagerberg, Lundberg, Jonasson, Forsberg, Zwahlen, Kampf, Wester, Hober, et~al.]{uhlen2010towards}
Mathias Uhlen, Per Oksvold, Linn Fagerberg, Emma Lundberg, Kalle Jonasson, Mattias Forsberg, Martin Zwahlen, Caroline Kampf, Kenneth Wester, Sophia Hober, et~al.
\newblock Towards a knowledge-based human protein atlas.
\newblock \emph{Nature biotechnology}, 2010.

\bibitem[Vandenberghe and Boyd(1999)]{vandenberghe1999applications}
Lieven Vandenberghe and Stephen Boyd.
\newblock Applications of semidefinite programming.
\newblock \emph{Applied Numerical Mathematics}, 1999.

\bibitem[Vandenberghe et~al.(1998{\natexlab{a}})Vandenberghe, Boyd, and El~Gamal]{vandenberghe1998optimizing}
Lieven Vandenberghe, Stephen Boyd, and Abbas El~Gamal.
\newblock Optimizing dominant time constant in {RC} circuits.
\newblock \emph{IEEE Transactions on Computer-Aided Design of Integrated Circuits and Systems}, 1998{\natexlab{a}}.

\bibitem[Vandenberghe et~al.(1998{\natexlab{b}})Vandenberghe, Boyd, and Wu]{vandenberghe1998determinant}
Lieven Vandenberghe, Stephen Boyd, and Shao-Po Wu.
\newblock Determinant maximization with linear matrix inequality constraints.
\newblock \emph{SIAM journal on matrix analysis and applications}, 1998{\natexlab{b}}.

\bibitem[Vikram and Dasgupta(2016)]{vikram2016interactive}
Sharad Vikram and Sanjoy Dasgupta.
\newblock Interactive bayesian hierarchical clustering.
\newblock In \emph{International Conference on Machine Learning}, 2016.

\bibitem[Viswanathan et~al.(2023)Viswanathan, Gashteovski, Lawrence, Wu, and Neubig]{viswanathan2023large}
Vijay Viswanathan, Kiril Gashteovski, Carolin Lawrence, Tongshuang Wu, and Graham Neubig.
\newblock Large language models enable few-shot clustering.
\newblock \emph{arXiv preprint arXiv:2307.00524}, 2023.

\bibitem[Wagstaff and Cardie(2000)]{wagstaff2000clustering}
Kiri Wagstaff and Claire Cardie.
\newblock Clustering with instance-level constraints.
\newblock \emph{AAAI/IAAI}, 2000.

\bibitem[Wagstaff et~al.(2001)Wagstaff, Cardie, Rogers, Schr{\"o}dl, et~al.]{wagstaff2001constrained}
Kiri Wagstaff, Claire Cardie, Seth Rogers, Stefan Schr{\"o}dl, et~al.
\newblock Constrained k-means clustering with background knowledge.
\newblock In \emph{Icml}, 2001.

\bibitem[Waldspurger and Waters(2020)]{waldspurger2020rank}
Irene Waldspurger and Alden Waters.
\newblock Rank optimality for the burer--monteiro factorization.
\newblock \emph{SIAM journal on Optimization}, 2020.

\bibitem[Wang et~al.(2017)Wang, Chang, and Kolter]{wang2017mixing}
Po-Wei Wang, Wei-Cheng Chang, and J~Zico Kolter.
\newblock The mixing method: low-rank coordinate descent for semidefinite programming with diagonal constraints.
\newblock \emph{arXiv preprint arXiv:1706.00476}, 2017.

\bibitem[Wu et~al.(1999)Wu, Canning, Simon, and Wang]{wu1999thick}
Kesheng Wu, Andrew Canning, HD~Simon, and L-W Wang.
\newblock Thick-restart lanczos method for electronic structure calculations.
\newblock \emph{Journal of Computational Physics}, 1999.

\bibitem[Xing et~al.(2002)Xing, Jordan, Russell, and Ng]{xing2002distance}
Eric Xing, Michael Jordan, Stuart~J Russell, and Andrew Ng.
\newblock Distance metric learning with application to clustering with side-information.
\newblock \emph{Advances in neural information processing systems}, 2002.

\bibitem[Xiong et~al.(2016)Xiong, Johnson, and Corso]{xiong2016active}
Caiming Xiong, David~M Johnson, and Jason~J Corso.
\newblock Active clustering with model-based uncertainty reduction.
\newblock \emph{IEEE transactions on pattern analysis and machine intelligence}, 2016.

\bibitem[Yadav et~al.(2021)Yadav, Monath, Angell, and McCallum]{yadav2021event}
Nishant Yadav, Nicholas Monath, Rico Angell, and Andrew McCallum.
\newblock Event and entity coreference using trees to encode uncertainty in joint decisions.
\newblock In \emph{Proceedings of the Fourth Workshop on Computational Models of Reference, Anaphora and Coreference}, 2021.

\bibitem[Yang et~al.(2015)Yang, Sun, and Toh]{yang2015sdpnal}
Liuqin Yang, Defeng Sun, and Kim-Chuan Toh.
\newblock Sdpnal+: a majorized semismooth newton-cg augmented lagrangian method for semidefinite programming with nonnegative constraints.
\newblock \emph{Mathematical Programming Computation}, 2015.

\bibitem[Yurtsever et~al.(2015)Yurtsever, Tran~Dinh, and Cevher]{yurtsever2015universal}
Alp Yurtsever, Quoc Tran~Dinh, and Volkan Cevher.
\newblock A universal primal-dual convex optimization framework.
\newblock \emph{Advances in Neural Information Processing Systems}, 2015.

\bibitem[Yurtsever et~al.(2017)Yurtsever, Udell, Tropp, and Cevher]{yurtsever2017sketchy}
Alp Yurtsever, Madeleine Udell, Joel Tropp, and Volkan Cevher.
\newblock Sketchy decisions: Convex low-rank matrix optimization with optimal storage.
\newblock In \emph{Artificial intelligence and statistics}. PMLR, 2017.

\bibitem[Yurtsever et~al.(2019)Yurtsever, Fercoq, and Cevher]{yurtsever2019conditional}
Alp Yurtsever, Olivier Fercoq, and Volkan Cevher.
\newblock A conditional-gradient-based augmented lagrangian framework.
\newblock In \emph{International Conference on Machine Learning}, 2019.

\bibitem[Yurtsever et~al.(2021)Yurtsever, Tropp, Fercoq, Udell, and Cevher]{yurtsever2021scalable}
Alp Yurtsever, Joel~A Tropp, Olivier Fercoq, Madeleine Udell, and Volkan Cevher.
\newblock Scalable semidefinite programming.
\newblock \emph{SIAM Journal on Mathematics of Data Science}, 2021.

\bibitem[Zaslavskiy et~al.(2008)Zaslavskiy, Bach, and Vert]{zaslavskiy2008path}
Mikhail Zaslavskiy, Francis Bach, and Jean-Philippe Vert.
\newblock A path following algorithm for the graph matching problem.
\newblock \emph{IEEE Transactions on Pattern Analysis and Machine Intelligence}, 2008.

\bibitem[Zhao et~al.(1998)Zhao, Karisch, Rendl, and Wolkowicz]{zhao1998semidefinite}
Qing Zhao, Stefan~E Karisch, Franz Rendl, and Henry Wolkowicz.
\newblock Semidefinite programming relaxations for the quadratic assignment problem.
\newblock \emph{Journal of Combinatorial Optimization}, 1998.

\end{thebibliography}

\pagebreak

\appendix

\section{Derivations}
\label{sec:derivations}

\subsection{Penalized Dual Objective}
\label{sec:derive_pen_dual}

Begin by considering the following Lagrangian formulation of the model problem
\begin{equation}
    p_\star = \max_{X \in \SS^n_+} \; \min_{y \in \RR^m : y_{\cI} \geq 0} \; \cL(X, y) =  \min_{y \in \RR^m : y_{\cI} \geq 0} \; \max_{X \in \SS^n_+} \; \cL(X, y) = d_\star,
\end{equation}
where the Lagrangian is defined as
\begin{equation}
    \cL(X, y) := \langle C - \cA\adj  y, X \rangle + \langle b, y \rangle.
\end{equation}
Since $\cX_\star \subset \cX$, we know
\begin{equation}
\label{eq:restricted_domain}
\max_{X \in \SS^n_+} \; \min_{y \in \cY} \; \cL(X, y) = 
\max_{X \in \cX} \; \min_{y \in \cY} \; \cL(X, y)
  =\min_{y \in \cY}  \; \max_{X \in \cX} \; \cL(X, y).
\end{equation}
Furthermore, it is easy to show the following fact~\cite{overton1992large},
\begin{equation}
    \label{eq:max_eigenvalue_equiv}
    \max_{X \succeq 0 \, , \, \tr(X) \leq 1} \langle Z, X \rangle = \max \{ \lambda_\textrm{max}(Z), 0\}.
\end{equation}
Incorporating~\eqref{eq:max_eigenvalue_equiv} with~\eqref{eq:restricted_domain} allows us to write an equivalent formulation of the original dual problem by defining the penalized dual objective as shown in~\eqref{eq:pen_dual}
\begin{equation}
    f(y) := \alpha \max\{\lambda_\textrm{max}(C - \cA\adj y), 0\} + \langle b, y \rangle + \iota_\cY(y),\tag{pen-D}
\end{equation}
where $\iota_\cY(\cdot)$ is the indicator function defined such that $\iota_\cY(y) = 0$ if $y \in \cY$ and $\iota_\cY(y) = +\infty$ otherwise.

\subsection{Equivalent Penalized Dual Objective}
\label{sec:equiv_derive_pen_dual}
We begin by considering the dual form of the indicator function $\iota_\cY(\cdot)$ 
\begin{equation}
    \label{eq:dual_indicator}
    \iota_\cY(y) = \sup \{ - \langle \nu, y \rangle : \nu_\cI \leq 0, \nu_{\cI'} = 0\}.
\end{equation}
Define $\mathrm{N} := \{\nu \in \RR^m : \nu_{\cI} \leq 0, \nu_{\cI^{'}} = 0\}$. Substituting~\eqref{eq:dual_indicator} into~\eqref{eq:pen_dual} and utilizing~\eqref{eq:max_eigenvalue_equiv} gives
\begin{equation*}
    f(y) = \sup_{(X, \nu) \in \cX \times \mathrm{N}} \langle C - \cA\adj y , X\rangle + \langle b - \nu, y \rangle.
\end{equation*}

\subsection{Candidate Iterate}
\label{sec:derive_cand_iterate}

Notice that the candidate iterate can be written as follows
\begin{equation}
\tilde{y}_{t+1} = \argmin_{y \in \RR^m} \sup_{(X, \nu) \in \widehat{\cX}_t \times \mathrm{N}} F(X, \nu, y),
\end{equation}
where
\begin{equation*}
F(X, \nu, y) := \langle C - \cA\adj  y, X \rangle + \langle b - \nu, y \rangle + \frac{\rho}{2} \| y - y_t \|^2.
\end{equation*}

To solve for $\tilde{y}_{t+1}$, start by fixing $(X, \nu) \in \widehat{\cX}_t \times \mathrm{N}$ arbitrarily. Then, completing the square gives
\begin{equation*}
   \begin{aligned}
       \argmin_{y \in \RR^m} F(X, \nu, y)
       &= \argmin_{y \in \RR^m} \langle b - \nu - \cA X, y \rangle + \frac{\rho}{2} \| y - y_t \|^2 \\
       &= \argmin_{y \in \RR^m} \frac{\rho}{2} \left\| y - y_t + \frac{1}{\rho} (b - \nu - \cA X)\right\|^2 \\
       &= y_t - \frac{1}{\rho} (b - \nu - \cA X).
   \end{aligned} 
\end{equation*}
This implies that the candidate iterate can be computed as follows
\begin{equation}
    \tilde{y}_{t+1} = \argmin_{y \in \RR^m} \hat{f}_t(y) + \| y - y_t\|^2\\
    = y_t - \frac{1}{\rho} (b - \nu_{t+1} - \cA X_{t+1}),
\end{equation}
where $(X_{t+1}, \nu_{t+1}) \in \widehat{\cX}_t \times \mathrm{N}$ is the solution to the following optimization problem
\begin{equation}
(X_{t+1}, \nu_{t+1}) \in \argmax_{(X, \nu) \in \widehat{\cX}_t \times \mathrm{N}} \psi_t(X, \nu),
\end{equation}
where
\begin{equation}
    \psi_t(X, \nu) := 
\langle C, X \rangle + \langle b - \nu - \cA X , y_t\rangle - \frac{1}{2\rho} \| b - \nu - \cA X \|^2.
\end{equation}

\section{Solving Iteration Subproblems}
\label{sec:ipm}
In this section, we detail the primal-dual path-following interior point methods used to solve the small semidefinite program to compute the value $\hat{f}_t(\tilde{y}_{t+1})$ and the small quadratic semidefinite program~\eqref{eq:small_quad_sdp}.
Before we can derive the algorithms, it is necessary to define the $\svec$ operator and symmetric Kronecker product~\cite{alizadeh1998primal,schacke2004kronecker,todd1998nesterov}.

For any matrix $A \in \SS^n$, the vector $\svec(A) \in \RR \mathclose{\vphantom{\big)}}^{n+1 \choose 2}$ is defined as
\begin{equation*}
\svec(A) = \left[ a_{11}, \sqrt{2} a_{21}, \ldots, \sqrt{2} a_{n1}, a_{22}, \sqrt{2} a_{32}, \ldots, \sqrt{2} a_{n2}, \ldots, a_{nn}\right]^\top.
\end{equation*}
The $\svec$-operator is a structure preserving map between $\SS^n$ and $\RR\mathclose{\vphantom{\big)}}^{n+1 \choose 2}$ where the constant $\sqrt{2}$ multiplied by some of the entries ensures that
\begin{equation*}
\langle A, B \rangle = \tr(AB) = \svec(A)^\top \svec(B), \quad \forall\, A, B \in \SS^n.
\end{equation*}
For any $M \in \RR \mathclose{\vphantom{)}}^{n \times n}$, let $\textrm{vec}(M)$ be the map from $\RR \mathclose{\vphantom{)}}^{n \times n}$ to $\RR \mathclose{\vphantom{)}}^{n^2}$ defined by stacking the columns of $M$ into a single $n^2$-dimensional vector.
It is also useful to define the matrix $ U \in \RR\mathclose{\vphantom{\big)}}^{{n+1 \choose 2} \times n^2}$ which maps $\textrm{vec}(A) \mapsto \svec(A)$ for any $A \in \SS^n$. Let $u_{ij,kl}$ be the entry in the row which defines element $a_{ij}$ in $\svec(A)$ and the column that is multiplied with the element $a_{kl}$ in $\textrm{vec}(A)$.
Then
\begin{equation*}
    u_{ij,kl} = \begin{cases}
        1 & \quad i = j = k = l \\
        \frac{1}{\sqrt{2}} & \quad i = k\ne j = l \textrm{, or } i = l \ne j = k \\
        0 & \quad \textrm{o.w.}
    \end{cases}
\end{equation*}
As an example, the case when $n=2$:
\begin{equation*}
  U = \begin{bmatrix}
      1 & 0& 0& 0 \\
      0 & \frac{1}{\sqrt{2}}& \frac{1}{\sqrt{2}} & 0 \\
      0 & 0& 0& 1 \\
  \end{bmatrix}.  
\end{equation*}
Note that $U$ is a unique matrix with orthonormal rows and has the following property
\begin{equation*}
    U^\top U \textrm{vec}(A) = U^\top \svec(A) = \textrm{vec}(A), \quad \forall \, A \in \SS^n.
\end{equation*}

The symmetric Kronecker product $\otimes_s$ can be defined for any two square matrices $G, H \in \RR^{n\times n}$ by its action on a vector $\svec(A)$ for $A \in \SS^n$ as follows
\begin{equation*}
(G \otimes_s H)\, \svec(A) = \frac{1}{2} \svec(HAG^\top + GAH^\top).
\end{equation*}
Alternatively, but equivalently~\cite{schacke2004kronecker}, the symmetric Kronecker product can be defined more explicitly using the matrix $U$ defined above as follows
\begin{equation*}
G \otimes_s H = \frac{1}{2} U( G \otimes H + H \otimes G) U^\top,
\end{equation*}
where $\otimes$ is the standard Kronecker product. We use this latter definition in our implementation.

\subsection{Computing $\hat{f}_t(\tilde{y}_{t+1})$}
\label{sec:ipm_lb_spec_est}
The value $\hat{f}_t(\tilde{y}_{t+1})$ is the optimum value of the following optimization problem (remember that $\nu$ can be dropped since $\tilde{y}_{t+1}$ is always feasible)
\begin{equation*}
\begin{aligned}
&\maximize \;\; \langle C - \cA\adj \tilde{y}_{t+1}, \eta \bar{X}_t + V_t S V_t^\top \rangle + \langle b, \tilde{y}_{t+1}\rangle\\
&\;\st \;\;\; \eta \geq 0 \\
&\qquad\;\; S \succeq 0 \\
&\qquad\;\; \eta + \tr(S) \leq \alpha
\end{aligned}
\end{equation*}
This amounts to computing $\langle C - \cA\adj \tilde{y}_{t+1}, \eta_\star \bar{X}_t + V_t S_\star V_t^\top \rangle + \langle b, \tilde{y}_{t+1}\rangle$ where $(\eta_\star, S_\star)$ is a solution to the following (small) semidefinite program
\begin{equation}
\label{eq:compressed_subprob_1}
\begin{aligned}
&\minimize \;\; g_1^\top \svec(S) + \eta \, g_2\\
&\;\st \;\;\; \eta \geq 0 \\
&\qquad\;\; S \succeq 0 \\
&\qquad\;\; 1 - v_I^\top \svec(S) - \eta \geq 0
\end{aligned}
\end{equation}
where 
\begin{equation*}
    \begin{aligned}
        g_1 &= \alpha \,\svec(V_t^\top\!(\cA\adj\tilde{y}_{t+1} - C) V_t), \\
        g_2 &= \frac{\alpha}{\tr\!\left(\bar{X}\right)} \langle \bar{X}_t, \cA\adj\tilde{y}_{t+1} - C) \rangle, \\
        v_I &= \svec(I).  \qquad (I \textrm{ is the } k \times k \textrm{ identity matrix})
    \end{aligned}
\end{equation*}

We will use a primal-dual interior point method to solve~\eqref{eq:compressed_subprob_1}.
We follow the well known technique for deriving primal-dual interior point methods~\cite{helmberg1994interior}. 
We start by defining the Lagrangian of the dual barrier problem of~\eqref{eq:compressed_subprob_1},
\begin{equation}
\label{eq:dual_barrier_Lagrangian1}
\begin{aligned}
L_\mu(S, \eta, T, \zeta, \omega) &= g_1^\top \svec(S) + \eta g_2 - \svec(S)^\top\svec(T) - \eta \zeta \\
&\qquad- \omega(1 - v_I^\top \svec(S) - \eta) + \mu (\log \det (T) + \log\zeta + \log \omega),
\end{aligned}
\end{equation}
where we introduce a dual slack matrix $T \succeq 0$ as complementary to $S$, a dual slack scalar $\zeta \geq 0$ as complementary to $\eta$, a Lagrange multiplier $\omega \geq 0$ for the trace constraint inequality, and a barrier parameter $\mu > 0$.
Notice that we have moved from needing to optimize a constrained optimization problem to an unconstrained optimization problem.
The saddle point solution of~\eqref{eq:dual_barrier_Lagrangian1} is given by the solution of the KKT-conditions, which reduces to just the first-order optimality conditions since the problem is unconstrained.
The first-order optimality conditions of~\eqref{eq:dual_barrier_Lagrangian1} are the following system of equations
\begin{align}
\nabla_S L_\mu &= g_1 - \svec(T) + \omega v_I = 0 \label{eq:kkt_1}\\
\nabla_\eta L_\mu &= g_2 - \zeta + \omega = 0 \label{eq:kkt_2}\\
\nabla_T L_\mu &= S - \mu T^{-1} = 0 \label{eq:kkt_3}\\
\nabla_\zeta L_\mu &= \eta - \mu \zeta^{-1} = 0 \label{eq:kkt_4}\\
\nabla_\omega L_\mu &= 1 - v_I^\top \svec(S) - \eta - \mu \omega^{-1} = 0 \label{eq:kkt_5}
\end{align}
By the strict concavity of $\log \det T$, $\log \zeta$, and $\log \omega$, there exists a unique solution $(S_\mu, \eta_\mu, T_\mu, \zeta_\mu, \omega_\mu)$ to this system of equations for any value of the barrier parameter $\mu > 0$.
The sequence of these solutions as $\mu \to 0$ forms the \emph{central trajectory} (also known as the central \emph{path}).
For a point $(S, \eta, T, \zeta, \omega)$ on the central trajectory, we can use any combination of~\eqref{eq:kkt_3},~\eqref{eq:kkt_4}, and/or~\eqref{eq:kkt_5} to solve for $\mu$,
\begin{equation}
    \label{eq:barrier_param_est}
    \mu = \frac{\langle S, T \rangle}{k} = \eta \zeta = \omega(1 - v_I^\top \svec(S) - \eta) = \frac{\langle S, T \rangle + \eta \zeta + \omega(1 - v_I^\top \svec(S) - \eta)}{k + 2}.
\end{equation}

The fundamental idea of primal-dual interior point methods is to use Newton's method to follow the central path to a solution of~\eqref{eq:compressed_subprob_1}.
Before we can apply Newton's method, we must linearize the non-linear equations ~\eqref{eq:kkt_3},~\eqref{eq:kkt_4}, and~\eqref{eq:kkt_5} into an equivalent linear formulation.
There are several ways one could linearize these equations and the choice of linearization significantly impacts the algorithm's behavior (see~\cite{alizadeh1998primal} or~\cite{helmberg1994interior} for more on the choice of linearization).
We choose one standard method linearization, detailed in the following system
of equations
\begin{equation}
    \label{eq:linearized_system1}
    \mathrm{F}_\mu(\theta) = \mathrm{F}_\mu(S, \eta, T, \zeta, \omega) := \begin{pmatrix}
        \vspace{0.1em} g_1 - \svec(T) + \omega v_I \\
        \vspace{0.1em} g_2 - \zeta + \omega \\
        \vspace{0.1em} ST - \mu I \\
        \vspace{0.1em} \eta\zeta - \mu\\
        \vspace{0.1em} \omega (1 - v_I^\top \svec(S) - \eta) - \mu\\
    \end{pmatrix}
    = 0.
\end{equation}
The solution $\theta_\star$ to this system of equations $\mathrm{F}_\mu(\theta) = 0$ satisfies the first-order optimality conditions~\eqref{eq:kkt_1}-\eqref{eq:kkt_5} and is the optimal solution to the barrier problem.
We utilize Newton's method to take steps in the update direction $\Delta \theta = (\Delta S, \Delta \eta, \Delta T, \Delta \zeta, \Delta \omega)$ towards $\theta_\star$.
The update direction $\Delta \theta$ determined by Newton's method must satisfy the following equation
\begin{equation*}
    \mathrm{F}_\mu(\theta) + \nabla \mathrm{F}_\mu(\Delta \theta) = 0.
\end{equation*}
Hence, the update direction $\Delta \theta$ is the solution to the following system of equations (after the same standard linearization has been applied to the following system as in~\eqref{eq:linearized_system1})
\begin{align}
- \svec(\Delta T) + \Delta \omega \, v_I &= \svec(T) - g_1 - \omega \, v_I \\
- \Delta \zeta + \Delta \omega &= \zeta - g_2 - \omega \\
\omega^{-1}(1 - v_I^\top \svec(S) - \eta) \Delta \omega - v_I^\top \svec(\Delta S) - \Delta \eta &= \mu \omega^{-1} + v_I^\top \svec(S) + \eta - 1 \\
(T \otimes_s S^{-1}) \, \svec(\Delta S) + \svec(\Delta T) &= \mu \,\svec(S^{-1}) - \svec(T) \\
\zeta \eta^{-1} \Delta \eta + \Delta \zeta &= \mu \eta^{-1} - \zeta
\end{align}
We can solve this system efficiently by analytically eliminating all variables except $\svec(\Delta S)$. We can compute $\svec(\Delta S)$ by solving the following linear matrix equation using an off-the-shelf linear system solver
\begin{equation}
\label{eq:delta_S_linear_system}
\begin{aligned}
&\left( T \otimes_s S^{-1} + \frac{\zeta \eta^{-1}}{\kappa_1 \zeta \eta^{-1} + 1} v_I v_I^\top \right) \svec(\Delta S) 
= v_I g_2 - v_I \mu \eta^{-1} - g_1 + \mu \, \svec(S^{-1}) \\
&\qquad\qquad + v_I \zeta \eta^{-1} (- \kappa_1 \zeta \eta^{-1} - 1)^{-1} \left(-\kappa_1 (\mu \eta^{-1} - g_2 - \omega) + \mu \omega^{-1} + v_I^\top \svec(S) + \eta - 1\right)
\end{aligned}
\end{equation}
where $\kappa_1 := \omega^{-1}(1 - v_I^\top \svec(S) - \eta)$.
Then, computing the rest of the update directions $\Delta \eta$, $\svec(\Delta T)$, $\Delta \zeta$, and $\Delta \omega$ amounts to back-substituting the solution to~\eqref{eq:delta_S_linear_system} for $\svec(\Delta S)$ through the analytical variable elimination equations.

Given how to compute the update directions, the primal-dual interior point method proceeds as follows. Initialize the variables $\theta = (S, \eta, T, \zeta, \omega)$ to an arbitrary strictly feasible point (i.e. $S \succ 0$, $\eta > 0$, $T \succ 0$, $\zeta > 0$, and $\omega > 0$). Starting from this primal-dual pair we compute an estimate of the barrier parameter as follows
\begin{equation*}
    \mu \gets \frac{\langle S, T \rangle + \eta \zeta + \omega (1 - v_I^\top \svec(S) - \eta)}{2(k + 2)},
\end{equation*}
where, as done in~\cite{helmberg1994interior}, we use~\eqref{eq:barrier_param_est} and divide by two.
Then, we compute the update direction $\Delta \theta$ as described above and perform a backtracking line search to find a step size $\delta \in (0, 1]$ such that $\theta + \delta \, \Delta \theta$ is again strictly feasible.
Lastly, following~\cite{helmberg2000spectral}, we compute a non-increasing estimate of the barrier parameter 
\begin{equation*}
    \mu \gets \min \left\{ \mu_\textrm{prev}, \gamma \frac{\langle S, T \rangle + \eta \zeta + \omega (1 - v_I^\top \svec(S) - \eta)}{2(k + 2)}\right \}
    \quad \textrm{where} \quad 
    \gamma = \begin{cases}
    1 &  \textrm{if }\delta \leq \frac{1}{5} \\
    \frac{5}{10} - \frac{4}{10} \delta^2 & \textrm{if } \delta > \frac{1}{5}
    \end{cases}.
\end{equation*}
We iterate over these steps until the barrier parameter $\mu$ is small enough (e.g. $\mu < 10^{-7}$).

\subsection{Solving~\eqref{eq:small_quad_sdp}}
\label{sec:small_quad_sdp_ipm}
The subproblem~\eqref{eq:small_quad_sdp} can be rewritten as the following (small) quadratic semidefinite program
\begin{equation*}
\begin{aligned}
&\maximize \;\;\langle C - \cA\adj y_t, \eta \bar{X}_t + V_t S V_t^\top \rangle + \langle b - \tilde{\nu}, y_t \rangle - \frac{1}{2\rho} \left\| b - \tilde{\nu} - \cA (\eta \bar{X}_t + V_t S V_t^\top)\right\|_2^2\\
&\; \st \;\;\; \eta \geq 0 \\
&\qquad\;\; S \succeq 0 \\
&\qquad\;\; \eta + \tr(S) \leq \alpha
\end{aligned}
\end{equation*}
For this subproblem, unlike the subproblem solved in~\autoref{sec:ipm_lb_spec_est}, we are solving for $\eta$ and $S$ to compute the candidate iterate $\tilde{y}_{t+1}$, update the primal variable, and  update the model.
This (small) quadratic semidefinite program is equivalent to the following optimization problem

\begin{equation}
\label{eq:compressed_subprob_2}
\begin{aligned}
&\minimize \;\; \frac{1}{2} \,\svec(S)^\top Q_{11} \,\svec(S) + \eta \, q_{12}^\top \, \svec(S) + \frac{1}{2} \eta^2 q_{22} + h_1^\top \svec(S) + \eta \, h_2\\
&\;\st \;\;\; \eta \geq 0 \\
&\qquad\;\; S \succeq 0 \\
&\qquad\;\; 1 - v_I^\top \svec(S) - \eta \geq 0
\end{aligned}
\end{equation}
where 
\begin{equation*}
    \begin{aligned}
        Q_{11} &= \frac{\alpha^2}{\rho} \sum_{i=1}^m \svec(V_t^\top\! A_i V_t) \,\svec(V_t^\top\! A_i V_t)^\top \\
        q_{12} &= \frac{\alpha^2}{\rho \, \tr\!\left(\bar{X}_t\right)} \,\svec\left(V_t^\top\! \cA^{*}\cA\,\bar{X}_t V_t\right) \\
        q_{22} &= \frac{\alpha^2}{\rho \, \tr\!\left(\bar{X}_t\right)^2} \left\langle \cA\bar{X}_t,\cA\bar{X}_t\right\rangle \\
        h_1 &= \alpha \,\, \svec\left(V_t^\top \!\left(\cA\adj y_t - C - \frac{1}{\rho} \cA\adj (b - \tilde{\nu}) \right) V_t\right) \\
        h_2 &= \frac{\alpha}{\tr\!\left(\bar{X}_t\right)} \left\langle \bar{X}, \, \cA\adj y_t - C - \frac{1}{\rho} \cA\adj (b - \tilde{\nu}) \right\rangle \\
        v_I &= \svec(I) 
    \end{aligned}
\end{equation*}

We follow the same derivation procedure as in~\autoref{sec:ipm_lb_spec_est}, so we proceed by including the details which differ from the previous section.
The Lagrangian of the dual barrier problem of~\eqref{eq:compressed_subprob_2} is as follows
\begin{equation}
\label{eq:dual_barrier_Lagrangian2}
\begin{aligned}
L_\mu(S, \eta, T, \zeta, \omega) &= \frac{1}{2} \,\svec(S)^\top Q_{11} \,\svec(S) + \eta \, q_{12}^\top \, \svec(S) \\ &\qquad+ \frac{1}{2} \eta^2 q_{22} + h_1^\top \svec(S) + \eta \, h_2 - \svec(S)^\top\svec(T) - \eta \zeta \\
&\qquad- \omega(1 - v_I^\top \svec(S) - \eta) + \mu (\log \det (T) + \log\zeta + \log \omega).
\end{aligned}
\end{equation}
The first-order optimality conditions of~\eqref{eq:dual_barrier_Lagrangian2} yields the following system of equations (after the same standard linearization)
\begin{equation}
    \label{eq:linearized_system2}
    \mathrm{F}_\mu(S, \eta, T, \zeta, \omega) := \begin{pmatrix}
        \vspace{0.1em} Q_{11}\, \svec(S) + \eta \,q_{12} + h_1 - \svec(T) + \omega v_I \\
        \vspace{0.1em} q_{12}^\top \, \svec(S) + \eta \,q_{22} + h_2 - \zeta + \omega \\
        \vspace{0.1em} ST - \mu I \\
        \vspace{0.1em} \eta\zeta - \mu\\
        \vspace{0.1em} \omega (1 - v_I^\top \svec(S) - \eta) - \mu\\
    \end{pmatrix}
    =: \begin{pmatrix}
        \vspace{0.1em} \mathsf{F}_1 \\
        \vspace{0.1em} \mathsf{F}_2 \\
        \vspace{0.1em} \mathsf{F}_3 \\
        \vspace{0.1em} \mathsf{F}_4 \\
        \vspace{0.1em} \mathsf{F}_5 \\
    \end{pmatrix}
    = 0.
\end{equation}
The Newton's method step direction $(\Delta S, \Delta \eta, \Delta T, \Delta \zeta, \Delta \omega)$ is determined via the following linearized system
\begin{align}
Q_{11} \svec(\Delta S) + \Delta \eta \,q_{12} - \svec(\Delta T) + \Delta \omega \, v_I &= - \mathsf{F}_1 \\
q_{12}^\top \, \svec(\Delta S) + \Delta \,\eta q_{22} - \Delta \zeta + \Delta \omega &= - \mathsf{F}_2 \\
\omega^{-1}(1 - v_I^\top \svec(S) - \eta) \Delta \omega - v_I^\top \svec(\Delta S) - \Delta \eta &= \mu \omega^{-1} + v_I^\top \svec(S) + \eta - 1 \\
(T \otimes_s S^{-1}) \, \svec(\Delta S) + \svec(\Delta T) &= \mu \,\svec(S^{-1}) - \svec(T) \\
\zeta \eta^{-1} \Delta \eta + \Delta \zeta &= \mu \eta^{-1} - \zeta
\end{align}
We can solve this system efficiently by analytically eliminating all variables except $\svec(\Delta S)$. We can compute $\svec(\Delta S)$ by solving the following linear matrix equation using an off-the-shelf linear system solver
\begin{equation}
\label{eq:delta_S_linear_system2}
\begin{aligned}
&\left( Q_{11} + T \otimes_s S^{-1} - (\kappa_1 \kappa_2 + 1)^{-1}\big( q_{12} (\kappa_1 q_{12} + v_I)^\top \!+ v_I(q_{12} - \kappa_2 v_I)^\top\big)\right)  \svec(\Delta S) \\
&\qquad=  q_{12} \left( (\kappa_1 \kappa_2 + 1)^{-1}\left( \mu \omega^{-1} + v_I^\top \svec(S) + \eta - 1 + \kappa_1 (\smsf{F}_2 - \mu \eta^{-1} + \zeta)\right)\right) \\
&\qquad\qquad + v_I\left((\kappa_1 \kappa_2 + 1)^{-1} \left( \smsf{F}_2 - \mu \eta^{-1} + \zeta - \kappa_2 (\mu \omega^{-1} + v_I^\top \svec(S) + \eta - 1) \right)\right) \\
&\qquad\qquad-\smsf{F}_1 + \mu \,\svec(S^{-1}) - \svec(T),
\end{aligned}
\end{equation}
where $\kappa_1 := \omega^{-1}(1 - v_I^\top \svec(S) - \eta)$ and $\kappa_2 := \zeta \eta^{-1} + q_{22}$.
Then, computing the rest of the update directions $\Delta \eta$, $\svec(\Delta T)$, $\Delta \zeta$, and $\Delta \omega$ amounts to back-substituting the solution to~\eqref{eq:delta_S_linear_system2} for $\svec(\Delta S)$ through the analytical variable elimination equations.

We make a single change to the initialization as compared with the interior point method presented in~\autoref{sec:ipm_lb_spec_est}. Since this interior point method can be executed multiple times in a row with only the value of $\tilde{\nu}$ changing as the first step in the alternating maximization algorithm, we warm-start initialize $\theta$ with the previous execution's $\theta_\star$. We observe non-negligible convergence improvements over arbitrary initialization as this interior point method procedure is called in sequence.

\subsection{Remark on Solving Subproblems}
\citet{ding2023revisiting} claim that~\eqref{eq:small_quad_sdp} could be solved using (accelerated) projected gradient descent and describe the method necessary for projection and proper scaling, but they use Mosek~\cite{aps2019mosek}, an off-the-shelf solver, in their experiments. We find that while theoretically possible, projected gradient descent does not work well in practice since choosing the correct step size to obtain a high quality solution to~\eqref{eq:small_quad_sdp} is difficult and varies between time steps $t$. We instead use (and advocate for) the primal-dual interior point method derived in Appendix~\ref{sec:small_quad_sdp_ipm}.

\section{Additional Details on Scaling with Matrix Sketching}
\label{sec:add_sketching}

%
The values $\langle C, \bar{X}_t \rangle$, $\tr(\bar{X}_t)$, and $\cA \bar{X}_t$ can be efficiently maintained given low-rank updates to $\bar{X}_t$ due to the linearity of the operations, 
\begin{align}
\langle C, \bar{X}_{t+1} &\rangle \gets \langle C, \eta_{t+1} \bar{X}_t + V_t Q_{\underline{c}} \Lambda_{\underline{c}} Q_{\underline{c}}^\top V_t^\top \rangle = \eta_{t+1} \langle C, \bar{X}_t \rangle + \tr\Big(V_t^\top Q_{\underline{c}}^\top\big(C V_t Q_{\underline{c}} \Lambda_{\underline{c}}\big)\Big),\\
\tr(\bar{X}_{t+1}) &\gets \tr\Big(\eta_{t+1} \bar{X}_t + V_t Q_{\underline{c}} \Lambda_{\underline{c}} Q_{\underline{c}}^\top V_t^\top\Big) = \eta_{t+1} \tr\big(\bar{X}_t\big) + \tr\!\left(\Lambda_{\underline{c}}\right), \\
\cA \bar{X}_{t+1} &\gets \cA \Big(\eta_{t+1} \bar{X}_t + V_t Q_{\underline{c}} \Lambda_{\underline{c}} Q_{\underline{c}}^\top V_t^\top\Big) = \eta_{t+1} \cA \bar{X}_t + \cA \big(V_t Q_{\underline{c}} \Lambda_{\underline{c}} Q_{\underline{c}}^\top V_t^\top \big),
\end{align}
where $\cA \big(V_t Q_{\underline{c}} \Lambda_{\underline{c}} Q_{\underline{c}}^\top V_t^\top \big)$ can be computed efficiently since 
\begin{equation}
\Big(\cA \big(V_t Q_{\underline{c}} \Lambda_{\underline{c}} Q_{\underline{c}}^\top V_t^\top \big)\Big)_i = \tr\Big(V_t^\top Q_{\underline{c}}^\top\big(A_i V_t Q_{\underline{c}} \Lambda_{\underline{c}}\big)\Big).
\end{equation}

\subsection{Nystr\"om Sketch Reconstruction}
Given the sketch $P_t$ and the projection matrix $\Psi$, we can compute 
a rank-$r$ approximation to $\bar{X}_t$. The approximation is as follows
\begin{equation}
    \label{eq:nystrom_approx}
    \widehat{\bar{X}_t} := P_t(\Psi^\top P_t)^+ P_t^\top = (\bar{X}_t \Psi) (\Psi^\top \bar{X}_t \Psi)^+(\bar{X}_t \Psi)^\top,
\end{equation}
where $\vphantom{\cdot}^+$ is the Moore-Penrose inverse. The reconstruction is called a \emph{Nystr\"om approximation}. We will almost always compute the best rank-$r$ approximation of $\widehat{\bar{X}_t}$ for memory efficiency. In practice, we use the numerically stable implementation of the Nystr\"om approximation provided by~\citet{yurtsever2021scalable}.
The Nystr\"om approximation yields a provably good approximation for any sketched matrix. See the following fact from~\cite{tropp2017fixed},
\begin{fact}
    Fix $X \in \SS^n_+$ arbitrarily. Let $P:= X \Psi$ where $\Psi \in \RR^{n \times r}$ has independently sampled standard normal entries.
    For each $r' < r - 1$, the Nystr\"om approximation $\widehat{X}$ as defined in~\eqref{eq:nystrom_approx} satisfies
    \begin{equation}
        \EE_\Psi \big\| X - \widehat{X}\big\|\nuc \leq \left( 1 + \frac{r'}{ r - r' - 1} \right) \big\| X - [\mkern-2.5mu[X]\mkern-2.5mu]_r \big\|\nuc,
    \end{equation}
    where $\EE_\Psi$ is the expectation with respect to $\Psi$ and $[\mkern-2.5mu[X]\mkern-2.5mu]_r$ is returns an $r$-truncated singular-value decomposition of the matrix $X$, which is a best rank-r approximation with respect to every unitarily invariant norm\emph{~\cite{mirsky1960symmetric}}.
    If we replace $\widehat{X}$ with $[\mkern-2.5mu[\widehat{X}]\mkern-2.5mu]_r$, this error bound still remains valid.
\end{fact}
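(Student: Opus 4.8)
The plan is to reduce this to the standard expectation analysis of the Gaussian randomized range finder~\cite{halko2011finding}. The crucial first step is to rewrite the Nystr\"om reconstruction~\eqref{eq:nystrom_approx} as a two-sided projection of $X$. Since $X \in \SS^n_+$, let $X^{1/2}$ be its positive semidefinite square root and put $B := X^{1/2}$ and $W := B\Psi = X^{1/2}\Psi$. Substituting $X = BB$ into~\eqref{eq:nystrom_approx} and using $X\Psi = BW$ and $\Psi^\top X\Psi = W^\top W$, the symmetry of $B$ gives
\begin{equation*}
\widehat{X} = (BW)(W^\top W)^+(BW)^\top = B\big(W(W^\top W)^+ W^\top\big)B = B\,P_W\,B,
\end{equation*}
where $P_W := W(W^\top W)^+W^\top$ is the orthogonal projector onto $\mathrm{range}(W) = \mathrm{range}(X^{1/2}\Psi)$.

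From this identity the residual is positive semidefinite, $X - \widehat{X} = B(I - P_W)B \succeq 0$, so its nuclear norm equals its trace. By cyclicity of the trace and idempotence of $I - P_W$,
\begin{equation*}
\big\|X - \widehat{X}\big\|\nuc = \tr\!\big(B(I - P_W)B\big) = \big\|(I - P_W)B\big\|\fro^2.
\end{equation*}
Hence the target quantity is exactly the expected squared Frobenius error of projecting $B = X^{1/2}$ onto the range of its Gaussian sketch $B\Psi$, i.e.\ the error of the randomized range finder applied to $B$.

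Next I would invoke the expectation bound for this error. Write $X = U\Lambda U^\top$, so $B$ is symmetric with singular values $\sigma_j(B) = \lambda_j(X)^{1/2}$; by rotation invariance of the Gaussian law, $U^\top\Psi$ is again i.i.d.\ standard normal, and I partition it into its first $r'$ rows $\Omega_1 \in \RR^{r'\times r}$ and remaining rows $\Omega_2$. The deterministic structural inequality for the range finder gives $\|(I - P_W)B\|\fro^2 \leq \|\Sigma_2\|\fro^2 + \|\Sigma_2\Omega_2\Omega_1^+\|\fro^2$ with $\Sigma_2 = \mathrm{diag}(\sigma_{r'+1}(B),\ldots,\sigma_n(B))$. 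Taking expectations, using independence of $\Omega_1,\Omega_2$ and the Gaussian identity $\EE\|\Omega_1^+\|\fro^2 = r'/(r - r' - 1)$ (finite precisely when $r' < r-1$), yields
\begin{equation*}
\EE_\Psi\big\|X - \widehat{X}\big\|\nuc \leq \Big(1 + \tfrac{r'}{r - r' - 1}\Big)\sum_{j > r'}\lambda_j(X) = \Big(1 + \tfrac{r'}{r - r' - 1}\Big)\big\|X - [\mkern-2.5mu[X]\mkern-2.5mu]_{r'}\big\|\nuc.
\end{equation*}
I expect this expectation step to be the main obstacle: it requires both the deterministic structural bound and the evaluation of $\EE\|\Omega_1^+\|\fro^2$ for a fat Gaussian matrix, which is the source of the constant and of the constraint $r' < r-1$.

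Finally, the truncated claim is immediate here: $W = X^{1/2}\Psi$ has at most $r$ columns, so $P_W$ and hence $\widehat{X} = B P_W B$ have rank at most $r$, giving $[\mkern-2.5mu[\widehat{X}]\mkern-2.5mu]_r = \widehat{X}$ and the same bound verbatim; were the truncation rank taken strictly below the sketch size, a short triangle-inequality comparison of $[\mkern-2.5mu[\widehat{X}]\mkern-2.5mu]_r$ with $\widehat{X}$ would close the gap.
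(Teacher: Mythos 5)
Your argument is correct, but note what you are comparing against: the paper itself offers no proof of this fact at all --- it is imported verbatim from \citet{tropp2017fixed}, and what you have written is, in substance, exactly the proof given in that reference (built on \citet{halko2011finding}). The reduction $\widehat{X} = X^{1/2} P_W X^{1/2}$ with $P_W$ the orthogonal projector onto $\mathrm{range}(X^{1/2}\Psi)$, the resulting identity $\big\|X - \widehat{X}\big\|\nuc = \big\|(I - P_W)X^{1/2}\big\|\fro^2$ (valid because the residual is positive semidefinite), the deterministic range-finder bound after rotating $\Psi$ by the eigenbasis, and the Gaussian computations $\EE\big\|\Sigma_2\Omega_2\Omega_1^+\big\|\fro^2 = \|\Sigma_2\|\fro^2\,\EE\|\Omega_1^+\|\fro^2$ with $\EE\|\Omega_1^+\|\fro^2 = r'/(r-r'-1)$ (the sole source of the constant and of the requirement $r' < r-1$) are precisely the steps of the cited argument. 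So this is the same route, reconstructed correctly, not a genuinely different one.

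One discrepancy you should have flagged explicitly rather than passed over: your chain correctly terminates in $\left(1 + \frac{r'}{r-r'-1}\right)\big\|X - [\mkern-2.5mu[X]\mkern-2.5mu]_{r'}\big\|\nuc$, whereas the fact as printed has $[\mkern-2.5mu[X]\mkern-2.5mu]_{r}$ on the right-hand side. These are different statements, and yours is the true one: the subscript $r$ is a transcription slip from \citet{tropp2017fixed}, whose sketch size $k$ and truncation rank $r$ became $r$ and $r'$ in this paper's notation. Read literally, the printed bound fails: since $\rank \widehat{X} \leq r$, Mirsky's theorem gives the pointwise reverse inequality $\big\|X - \widehat{X}\big\|\nuc \geq \big\|X - [\mkern-2.5mu[X]\mkern-2.5mu]_{r}\big\|\nuc$, so already at $r' = 0$ (constant $1$) the printed claim would force the random Nystr\"om approximation to be almost surely optimal among rank-$r$ approximations, which is false for generic spectra. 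Your handling of the final sentence is right for the statement as printed --- $\rank \widehat{X} \leq \rank P_W \leq r$ makes $[\mkern-2.5mu[\widehat{X}]\mkern-2.5mu]_r = \widehat{X}$, so the claim is vacuous --- but your parenthetical that a ``short triangle-inequality comparison'' would handle truncation strictly below the sketch size is optimistic if one wants the same constant: the naive route inflates the bound by an additive multiple of $\EE\big\|X - \widehat{X}\big\|\nuc$, and \citet{tropp2017fixed} instead exploit $0 \preceq \widehat{X} \preceq X$ together with eigenvalue monotonicity to control the truncated error tightly.
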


\subsection{USBS Memory Requirements.}
When we implement USBS using this matrix sketching procedure we see significant decrease in time and memory required by USBS. Storing the projection matrix $\Psi$ and $P_t$ requires $\cO(nr)$ memory. Storing $\langle C, \bar{X}_t \rangle$ and $\tr(\bar{X}_t)$ requires $\cO(1)$ memory, respectively, and storing $\cA \bar{X}_t$, $\nu_t$, $y_t$, and $\tilde{y}_{t+1}$ requires $\cO(m)$ memory. The primal-dual interior point methods require $\cO(k^2)$ and $\cO(k^4)$ memory, respectively, and storing $V_t$ requires $\cO(nk)$ memory.
This means that the entire memory required by USBS is $\cO(nr + nk + m + k^4)$ working memory (not including the memory to store the problem data) which for many SDPs is much less than explicitly storing the iterate $\bar{X}_t$ which requires $\cO(n^2)$ memory.

\section{Proof of Theorem~\ref{thm:sublinear_convergence}}
\label{sec:main_thm_proof}

In this section, we present the non-asymptotic convergence results for USBS. 
 First, we summarize the relevant results presented by \citet{diaz2023optimal} in the following theorem.
\begin{theorem}
\label{thm:proximal_bundle_convergence}
Let $\beta > 0$ and $\rho > 0$ be constant, $f: \RR^m \to (-\infty, +\infty]$ be a proper closed convex function with nonempty set of minimizers $\cY_\star$, and the sequence of models produced by the proximal bundle method $\left\{ \hat{f}_{t+1}: \RR^m \to (-\infty, +\infty] \right\}$ satisfy the conditions~\eqref{eq:minorant_cond},~\eqref{eq:obj_subgrad_cond}, and \eqref{eq:model_subgrad_cond}. If the iterates $y_t$ and subgradients $g_{t+1}$ are bounded,
then the iterates $y_t$ have $f(y_t) - f(y_\star) \leq \varepsilon$ for all $t \geq \cO(1 / \varepsilon^3)$. Additionally, if $f(y) - f(y_\star) \geq \cO(\dist^2(y, \cY_\star))$ for all $y$, then the bound improves to $f(y_t) - f(y_\star) \leq \varepsilon$ for all $t \geq \cO(1 / \varepsilon)$.
\end{theorem}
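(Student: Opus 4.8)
The plan is to follow the two-phase accounting of \citet{diaz2023optimal}, partitioning the iteration stream into \emph{descent} steps (where \eqref{eq:descent_cond} holds and $y_{t+1} \gets \tilde y_{t+1}$) and maximal runs of \emph{null} steps over which the center stays fixed at some $\bar y$. First I would record the elementary consequences of the three model conditions. By the minorant property \eqref{eq:minorant_cond} the model underestimates $f$, and by the optimality of \eqref{eq:proposal_step} the aggregate subgradient $s_{t+1} = \rho(y_t - \tilde y_{t+1}) \in \partial \hat f_t(\tilde y_{t+1})$; together these give the nonnegativity of the predicted decrease $d_t := f(y_t) - \hat f_t(\tilde y_{t+1}) \ge \tfrac{\rho}{2}\|\tilde y_{t+1} - y_t\|^2 \ge 0$ and the sandwich $f(y_\star) \le \phi_t \le f(y_t)$, where $\phi_t := \hat f_t(\tilde y_{t+1}) + \tfrac{\rho}{2}\|\tilde y_{t+1} - y_t\|^2$ is the proximal value of the current model.

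\textbf{Descent-step accounting.} On every descent step the actual decrease is at least $\beta d_t$ by \eqref{eq:descent_cond}, so telescoping $f(y_{t+1}) \le f(y_t) - \beta d_t$ over all descent steps bounds the cumulative predicted decrease by $(f(y_0) - f(y_\star))/\beta$. Using the boundedness hypotheses — say $\dist(y_t, \cY_\star) \le R$ and $\|g_{t+1}\| \le G$ — this controls how many descent steps can achieve decrease $\Omega(\varepsilon)$, giving an $\cO(1/\varepsilon)$ bound on such steps and thereby limiting how long the suboptimality $f(y_t) - f(y_\star)$ can remain above $\varepsilon$.

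\textbf{Null-step recursion (the crux).} Within a run of null steps at center $\bar y$ the model only improves: by \eqref{eq:obj_subgrad_cond} and \eqref{eq:model_subgrad_cond} the next model $\hat f_{t+1}$ dominates the two affine minorants built from the fresh cut $f(\tilde y_{t+1}) + \langle g_{t+1}, \cdot - \tilde y_{t+1}\rangle$ and from the aggregate linearization $\hat f_t(\tilde y_{t+1}) + \langle s_{t+1}, \cdot - \tilde y_{t+1}\rangle$. A null step forces $f(\tilde y_{t+1}) - \hat f_t(\tilde y_{t+1}) > (1-\beta)\,d_t$, so the fresh cut sits well above the old model at $\tilde y_{t+1}$; evaluating the proximal subproblem against this two-piece lower model and using $\|g_{t+1} - s_{t+1}\| \le 2G$, I would derive the quadratic contraction $\phi_\star - \phi_{t+1} \le (\phi_\star - \phi_t) - c\,\rho\,(\phi_\star - \phi_t)^2/G^2$, where $\phi_\star := \min_y\{f(y) + \tfrac{\rho}{2}\|y - \bar y\|^2\}$ is the Moreau proximal value of $f$ itself. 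This yields $\phi_\star - \phi_t = \cO(G^2/(\rho N))$ after $N$ null steps, i.e.\ the model's proximal value closes the gap to $\phi_\star$ at an $\cO(1/N)$ rate.

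\textbf{Triggering a descent and combining.} Since $d_t = (f(\bar y) - \phi_t) + \tfrac{\rho}{2}\|\tilde y_{t+1} - \bar y\|^2$ shrinks toward $f(\bar y) - \phi_\star$ as the run proceeds, I would show that once the residual $\phi_\star - \phi_t$ is dominated by $f(\bar y) - \phi_\star$ the descent test \eqref{eq:descent_cond} must pass; using $R$ to lower-bound $f(\bar y) - \phi_\star$ in terms of the suboptimality $\Delta := f(\bar y) - f(y_\star)$, a center with $\Delta > \varepsilon$ forces a descent within $\cO(G^2/(\rho\beta^2\varepsilon^2))$ null steps. Summing this per-phase cost over descent phases whose suboptimality ranges down to $\varepsilon$ gives the overall $\cO(1/\varepsilon^3)$ bound. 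Under the quadratic-growth hypothesis $f(y) - f(y_\star) \ge \Omega(\dist^2(y,\cY_\star))$, the growth both sharpens the per-phase cost to $\cO(1/\Delta)$ and turns the telescoped descent inequality into a geometric contraction of the phase suboptimalities, so the geometric sum is dominated by its last term and collapses to $\cO(1/\varepsilon)$. I expect the null-step recursion and the bookkeeping that certifies a descent will fire to be the main obstacle: it requires quantifying how the two-piece minorant drives the proximal model value upward and tying that to the descent test, which is precisely where all three model conditions are invoked simultaneously.
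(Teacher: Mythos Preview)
The paper does not prove Theorem~\ref{thm:proximal_bundle_convergence} at all: it is stated explicitly as a summary of the results of \citet{diaz2023optimal} (see the sentence immediately preceding the theorem in Section~2.2), and the appendix only \emph{uses} it as a black box in the proof of Theorem~\ref{thm:sublinear_convergence}. So there is no ``paper's own proof'' to compare against here.

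Your proposal is a reasonable high-level sketch of the Diaz--Grimmer argument itself, and the two-phase decomposition (descent telescoping plus a null-step contraction on the proximal model value) is indeed the mechanism behind the $\cO(1/\varepsilon^3)$ and $\cO(1/\varepsilon)$ rates. If your goal were actually to reprove the cited result, the outline is on the right track, though a few points would need more care: the precise lower bound on $f(\bar y) - \phi_\star$ in terms of $\Delta$ and $R$ (you need something like $f(\bar y) - \phi_\star \gtrsim \min\{\Delta, \Delta^2/(\rho R^2)\}$ from the prox inequality), and the claim that under quadratic growth the per-phase null-step count sharpens to $\cO(1/\Delta)$ and the descent phases contract geometrically---both are true in Diaz--Grimmer but require the growth condition to enter twice, once in the null-step trigger and once in the descent recursion. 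But for the purposes of this paper, none of this is expected of you: the theorem is quoted, not proved.
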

\noindent We will use this result to prove the non-asymptotic convergence rates for the spectral bundle method.

Note the following fact necessary to use Theorem~\ref{thm:proximal_bundle_convergence}.

\begin{fact}
    \label{fact:bounded}
    The set $\{y \in \cY : f(y_\star) \leq f(y) \leq f(y_0)\}$ is compact and contains all iterates $y_t$.
    Moreover, the iterates $y_t$ and subgradients $g_{t+1}$ are bounded, i.e.\! $\sup_{\,t \geq 0} \{\dist(y_t, \cY_*)\} < \infty$ and $\sup_{\,t \geq 0} \left\{\|g_{t +1 }\| : g_{t+1} \in \partial f(\tilde{y}_{t+1})\right\} < \infty$.
\end{fact}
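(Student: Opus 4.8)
The plan is to verify the three assertions in order: (i) every iterate $y_t$ lies in the candidate set $\cS := \{y \in \cY : f(y_\star) \le f(y) \le f(y_0)\}$; (ii) $\cS$ is compact; and (iii) the two stated suprema are finite. For (i), I would first note that the algorithm keeps all iterates feasible: the paragraph on feasibility of $\tilde y_{t+1}$ shows $\tilde y_{t+1} \in \cY$, and since $y_{t+1}$ equals either $\tilde y_{t+1}$ or $y_t$, feasibility propagates by induction from $y_0 \in \cY$. Next I would show that $f(y_t)$ is non-increasing. On a null step this is trivial since $y_{t+1} = y_t$. On a descent step, evaluating the proximal subproblem \eqref{eq:proposal_step} at $y = y_t$ and using the minorant property \eqref{eq:minorant_cond} gives $\hat f_t(\tilde y_{t+1}) + \frac{\rho}{2}\|\tilde y_{t+1} - y_t\|^2 \le \hat f_t(y_t) \le f(y_t)$, so that $f(y_t) - \hat f_t(\tilde y_{t+1}) \ge 0$; the descent test \eqref{eq:descent_cond} with $\beta \in (0,1)$ then forces $f(\tilde y_{t+1}) \le f(y_t)$, i.e.\ $f(y_{t+1}) \le f(y_t)$. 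Hence $f(y_\star) \le f(y_t) \le f(y_0)$ for all $t$, and combined with feasibility this yields $y_t \in \cS$.

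The crux is (ii). Closedness of $\cS$ is immediate: $f$ is a proper closed (lower semicontinuous) convex function and $\cY$ is closed, so $\cS$ is an intersection of closed sets. For boundedness I would invoke the equivalence established in Appendix~\ref{sec:derive_pen_dual} that $\arg\min f = \cY_\star$, which the strong-duality assumption guarantees is nonempty and compact. The hard part, and the main obstacle, is converting compactness of the minimizer set into boundedness of the sublevel set $\{y : f(y) \le f(y_0)\}$ that contains $\cS$. I would do this with a recession-cone argument: for a proper closed convex function, every nonempty sublevel set has the same recession cone, namely $\{d : f^\infty(d) \le 0\}$, where $f^\infty$ denotes the recession function of $f$. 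Since $\cY_\star = \{y : f(y) \le \inf f\}$ is itself a nonempty bounded sublevel set, its recession cone is $\{0\}$; therefore the recession cone of $\{y : f(y) \le f(y_0)\}$ is also $\{0\}$, so this sublevel set is bounded. Thus $\cS$ is bounded and, being closed, compact.

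Finally, for (iii), fix any $y^\star \in \cY_\star$; then $\dist(y_t, \cY_\star) \le \|y_t - y^\star\| \le \sup_{y \in \cS} \|y - y^\star\| < \infty$ because $\cS$ is bounded, giving the first supremum bound. For the subgradients, recall that the subgradient furnished by the algorithm is $g_{t+1} = b - \alpha\,\cA(v_1 v_1^\top) \in \partial f(\tilde y_{t+1})$, where $v_1$ is a unit-norm maximal eigenvector of $C - \cA\adj \tilde y_{t+1}$. Since $\|v_1 v_1^\top\|\fro = \|v_1\|^2 = 1$ and $\cA$ is a fixed bounded linear operator, the triangle inequality gives $\|g_{t+1}\| \le \|b\| + \alpha\,\|\cA\|\op\,\|v_1 v_1^\top\|\fro = \|b\| + \alpha\,\|\cA\|\op$, a constant independent of $t$; note this uniform bound does not even require compactness of $\cS$. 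Together these establish Fact~\ref{fact:bounded}, supplying precisely the boundedness hypotheses needed to apply Theorem~\ref{thm:proximal_bundle_convergence}.
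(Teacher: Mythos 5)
Your proposal is correct, and on the decisive step it is more careful than the paper's own proof. The paper dispatches compactness in one line --- ``the level sets of continuous proper functions are compact'' --- which, read literally, is false (a linear or constant $f$ is continuous and proper with unbounded level sets); what actually makes the claim true here is exactly the structure you exploit: by the exact-penalty equivalence of Appendix~\ref{sec:derive_pen_dual}, $\argmin f = \cY_\star$, which strong duality makes nonempty and compact, and since all nonempty sublevel sets of a closed proper convex function share the recession cone $\{d : f^\infty(d) \leq 0\}$, boundedness of the minimal level set $\cY_\star$ propagates to $\{y : f(y) \leq f(y_0)\}$. Your recession-cone argument thus supplies the justification the paper elides. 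You additionally prove two assertions the paper's proof leaves implicit: that the iterates actually lie in the set, via feasibility of $\tilde y_{t+1}$ together with the monotonicity $f(y_{t+1}) \leq f(y_t)$, which you correctly derive by evaluating the proximal subproblem~\eqref{eq:proposal_step} at $y = y_t$, invoking the minorant condition~\eqref{eq:minorant_cond}, and feeding the resulting nonnegativity of $f(y_t) - \hat f_t(\tilde y_{t+1})$ into the descent test~\eqref{eq:descent_cond}. Your subgradient bound $\|g_{t+1}\| \leq \|b\| + \alpha\|\cA\|\op$ matches the paper's conclusion, and your sign $g_{t+1} = b - \alpha\,\cA(v_1 v_1^{\top})$ agrees with the main text (the ``$+$'' in the appendix is a typo); like the paper, you bound the specific subgradients the method generates rather than the full subdifferential --- which at boundary points of $\cY$ contains unbounded normal-cone directions --- and this is the only reading under which the stated supremum is finite and the only one Theorem~\ref{thm:proximal_bundle_convergence} requires. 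In short: the paper's route buys brevity; yours buys an argument that is actually airtight, at the modest cost of invoking one standard convex-analysis fact about recession cones.
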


\begin{proof}
The function $f$ is continuous and proper over the domain $\cY$.
It is a well-known fact that the level sets of continuous proper functions are compact. 
Hence, the union of level sets $\{y \in \cY : f(y_\star) \leq f(y) \leq f(y_0)\}$ is compact, and therefore, $\sup_{\,t \geq 0} \{\dist(y_t, \cY_*)\} < \infty$.
The subgradients take the form $g_{t+1} = b + \alpha \cA(v v^{\!\top}) \in \partial f(\tilde{y}_{t+1})$ where $v \in \RR^n$ is a unit-normed vector, and thus, $\|g_{t+1}\| < \infty$ for all $t$. 
\end{proof}

\noindent The following lemma guarantees quadratic growth whenever strict complementarity holds.

\begin{lemma}
    \label{lem:error_bound}
    Let $y \in \cY$ be arbitrary. Then,
    \begin{equation}
        \dist^{2^{d}}(y, \cY_\star) \leq \cO(f(y) - f(y_\star)),
    \end{equation}
    where $d \in \{0, 1, \ldots, m\}$ is the singularity degree\emph{~\cite{sturm2000error,drusvyatskiy2017many,sremac2021error}}. If strict complementarity holds, then $d \leq 1$, and if Slater's condition holds, then $d = 0$.
\end{lemma}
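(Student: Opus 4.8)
The plan is to prove this error bound in two stages: first reduce the penalized objective gap $f(y) - f(y_\star)$ to a \emph{feasibility residual} of $y$ against the dual solution set, and then invoke the Hölderian error bound for spectrahedra, whose exponent is governed by the singularity degree $d$. The starting observation is that $\cY_\star$ is itself a spectrahedron, namely
\[
\cY_\star = \{ y \in \cY : C - \cA\adj y \preceq 0, \; \langle b, y\rangle \leq d_\star \},
\]
because any dual-feasible $y$ satisfies $\langle b, y\rangle \geq d_\star$, so the halfspace constraint forces $\langle b, y\rangle = d_\star$. This lets me express $\dist(y, \cY_\star)$ purely in terms of how badly $y$ violates the PSD constraint $C - \cA\adj y \preceq 0$ and the linear constraint $\langle b, y\rangle \leq d_\star$ (the polyhedral constraint $y \in \cY$ holding by hypothesis).

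The second step, where the exactness parameter $\alpha \geq 2\mathfrak{N}(\cX_\star)$ is essential, is to bound both residuals by the objective gap. I would fix any primal optimum $X_\star$, so $\tr(X_\star) \leq \mathfrak{N}(\cX_\star) \leq \alpha/2$. For any $y \in \cY$, primal feasibility of $X_\star$ together with $y_\cI \geq 0$ gives $\langle b, y\rangle \geq \langle \cA X_\star, y\rangle = p_\star - \langle X_\star, C - \cA\adj y\rangle$, and since $\langle X_\star, C - \cA\adj y\rangle \leq \tr(X_\star)\,[\lambda_{\max}(C - \cA\adj y)]_+ \leq \tfrac{\alpha}{2}[\lambda_{\max}(C - \cA\adj y)]_+$, it follows that
\[
f(y) - f(y_\star) = \alpha\,[\lambda_{\max}(C - \cA\adj y)]_+ + (\langle b, y\rangle - d_\star) \geq \tfrac{\alpha}{2}\,[\lambda_{\max}(C - \cA\adj y)]_+ .
\]
Thus the PSD residual is $\cO(f(y) - f(y_\star))$; and since $[\langle b,y\rangle - d_\star]_+ \leq f(y) - f(y_\star)$ directly, so is the linear residual. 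Hence the total residual $r(y)$ of the system defining $\cY_\star$ obeys $r(y) \leq \cO(f(y) - f(y_\star))$.

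Finally I would invoke the singularity-degree error bound for the spectrahedron $\cY_\star$~\cite{sturm2000error,drusvyatskiy2017many,sremac2021error}: on the compact set containing all iterates (Fact~\ref{fact:bounded}) there is a constant $\gamma$ with $\dist(y, \cY_\star) \leq \gamma\, r(y)^{2^{-d}}$, where boundedness of $r(y)$ over this region lets me absorb the $\max\{r, r^{2^{-d}}\}$ form of the raw bound into the single Hölder term. Combining with the residual bound and raising both sides to the power $2^d$ gives $\dist^{2^d}(y, \cY_\star) \leq \cO(f(y) - f(y_\star))$, as claimed. The special cases then follow from standard facts about the singularity degree: strict complementarity makes the defining system regular enough that $d \leq 1$, and Slater's condition yields strict feasibility and hence $d = 0$, recovering a Lipschitzian (Hoffman-type) bound.

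The hard part will be the last step: correctly invoking the Hölderian error-bound theory and matching its exponent $2^{-d}$ to the singularity degree, including justifying a uniform constant over the compact region of Fact~\ref{fact:bounded} and reconciling the $\max\{r, r^{2^{-d}}\}$ form of the raw bound with the clean power stated in the lemma. By contrast, the residual-to-gap reduction is an elementary weak-duality computation whose only delicate point is the precise use of $\alpha \geq 2\mathfrak{N}(\cX_\star)$ to produce the $\tfrac{\alpha}{2}$ slack that controls the infeasibility.
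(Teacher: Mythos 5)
Your proposal is correct and follows the same overall route as the paper's proof: express $\cY_\star$ as a spectrahedral system, bound the residuals of that system by the penalized gap $f(y)-f(y_\star)$, and invoke the H\"olderian error bound with exponent $2^{-d}$ governed by the singularity degree. However, your version is actually tighter than the paper's at two points. First, the paper writes $\cY_\star = \cY \cap \cF$ with $\cF := \{y : C - \cA\adj y \preceq 0\}$, which as stated identifies the dual \emph{optimal} set with the dual \emph{feasible} set and consequently never explicitly controls the objective residual $\langle b, y\rangle - d_\star$; your description $\cY_\star = \{y \in \cY : C - \cA\adj y \preceq 0,\ \langle b,y\rangle \leq d_\star\}$ together with the bound $[\langle b,y\rangle - d_\star]_+ \leq f(y)-f(y_\star)$ closes exactly this hole. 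Second, the paper's chain passes through the intermediate step $\dist(y,\cF) \leq \cO\big(\alpha\,[\lambda_{\max}(C-\cA\adj y)]_+\big)$, a linear forward-error-by-residual bound that is not justified for a general LMI set (it would itself be an exponent-one error bound); you instead apply the error bound directly in residual form, which is how the cited results of Sturm and Sremac et al.\ are actually stated. Your weak-duality computation showing $f(y)-f(y_\star) \geq \tfrac{\alpha}{2}[\lambda_{\max}(C - \cA\adj y)]_+$ via $\tr(X_\star) \leq \alpha/2$ is the same argument the paper uses separately in Lemma~\ref{lem:dual_feasibility}, here correctly imported. The one point you should make explicit is that augmenting the LMI system with the polyhedral constraints $y \in \cY$ and $\langle b,y\rangle \leq d_\star$ does not increase the singularity degree: in Sturm's framework polyhedral pieces contribute Hoffman-type (exponent-one) terms and the degree is carried by the LMI part alone, which is what lets you quote the same $d$ as in the lemma statement; your handling of uniform constants via compactness (Fact~\ref{fact:bounded}) and the absorption of the $\max\{r, r^{2^{-d}}\}$ form is appropriate and matches the paper's implicit use of compactness of $\cY_\star$.
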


\begin{proof}
Let $\cF := \{y \in \RR^m : \cA\adj y - C \in \SS^n_+\}$. Observe that $\cY_\star = \cY \cap \cF$.
Since $\cY_\star$ is compact, the H\"olderian error bound~\cite{sturm2000error, drusvyatskiy2017many, sremac2021error} ensures that
\begin{equation*}
\dist(y, \cY_\star) \leq \cO \left(\dist^{2^{-d}}(y, \cY) + \dist^{2^{-d}}(y, \cF)\right),
\end{equation*}
where $d \in \{0, 1, \ldots, m\}$ is the singularity degree of the SDP.
This implies that for $y \in Y$,
\begin{equation*}
\begin{aligned}
\dist^{2^{d}}(y, \cY_\star) &\leq \cO \left(\dist(y, \cF)\right) \\
&\leq \cO \left( \alpha \max \{ \lambda_\textrm{max}(C - \cA\adj y), 0\} \right) \\
&\leq \cO \left( f(y) - f(y_\star) \right).
\end{aligned}
\end{equation*}
\end{proof}

\noindent The following three lemmas guarantee primal feasiblity, dual feasibility, and primal-dual objective optimality given convergence of the penalized dual objective, i.e. $f(y_t) - f(y_\star) \leq \varepsilon$.

\begin{lemma}
\label{lem:primal_feasibility}
At every descent step $t$,
\begin{equation}
    \dist(\cA X_{t+1}, \cK) \leq \cO\!\left(\sqrt{f(y_t) - f(y_\star)}\right).
\end{equation}
Additionally, if Slater's condition holds and $y_\star$ is unique, then
\begin{equation}
    \dist(\cA X_{t+1}, \cK) \leq \cO(f(y_t) - f(y_\star)).
\end{equation}
\end{lemma}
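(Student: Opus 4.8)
The plan is to control $\dist(\cA X_{t+1}, \cK)$ entirely through the model subgradient $s_{t+1} := \rho(y_t - \tilde y_{t+1}) = b - \nu_{t+1} - \cA X_{t+1}$, by reducing everything to the single inequality
\begin{equation*}
\dist(\cA X_{t+1}, \cK) \;\le\; \|s_{t+1}\| \;=\; \rho\,\|y_t - \tilde y_{t+1}\|.
\end{equation*}
To prove this reduction I would first observe that $\cK = b + \mathrm{N}$, so $\dist(\cA X_{t+1}, \cK) = \dist(\cA X_{t+1} - b, \mathrm{N})$, and then argue coordinatewise using the explicit form $\tilde y_{t+1} = \proj_\cY\!\big(y_t - \tfrac1\rho(b - \cA X_{t+1})\big)$ already derived in the main text. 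On $\cI'$ the projection acts as the identity, so $(s_{t+1})_{\cI'} = (b - \cA X_{t+1})_{\cI'}$ equals the infeasibility residual exactly. On $\cI$, at any coordinate with $(\cA X_{t+1})_i > b_i$ the clipping in $\proj_\cY$ is inactive (since $(y_t)_i \ge 0$), so again $|(s_{t+1})_i| = (\cA X_{t+1})_i - b_i$, while at the remaining coordinates the residual $[(\cA X_{t+1})_i - b_i]_+$ vanishes and is trivially dominated. Summing squares over $\cI$ and $\cI'$ yields the reduction.

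Given this reduction, the first (square-root) bound follows from the descent mechanism. Optimality of the proximal step together with the minorant property~\eqref{eq:minorant_cond} gives $\tfrac{1}{2\rho}\|s_{t+1}\|^2 = \tfrac{\rho}{2}\|y_t - \tilde y_{t+1}\|^2 \le \hat f_t(y_t) - \hat f_t(\tilde y_{t+1}) \le f(y_t) - \hat f_t(\tilde y_{t+1})$, and the descent condition~\eqref{eq:descent_cond} with $f(\tilde y_{t+1}) \ge f(y_\star)$ gives $f(y_t) - \hat f_t(\tilde y_{t+1}) \le \tfrac1\beta\big(f(y_t) - f(\tilde y_{t+1})\big) \le \tfrac1\beta\big(f(y_t) - f(y_\star)\big)$. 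Chaining these and taking square roots bounds $\|s_{t+1}\|$, hence $\dist(\cA X_{t+1}, \cK)$, by $\cO\!\big(\sqrt{f(y_t) - f(y_\star)}\big)$.

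For the sharp bound under Slater's condition and uniqueness of $y_\star$, I would instead bound $\|y_t - \tilde y_{t+1}\|$ directly via the triangle inequality $\|y_t - \tilde y_{t+1}\| \le \|y_t - y_\star\| + \|\tilde y_{t+1} - y_\star\|$ and apply the error bound of Lemma~\ref{lem:error_bound} at \emph{both} endpoints. Since Slater's condition forces singularity degree $d = 0$, that lemma reads $\|y - y_\star\| \le \cO(f(y) - f(y_\star))$ for every $y \in \cY$. Applying it at $y_t$ controls $\|y_t - y_\star\|$; applying it at the feasible iterate $\tilde y_{t+1} \in \cY$ controls $\|\tilde y_{t+1} - y_\star\|$ by $\cO(f(\tilde y_{t+1}) - f(y_\star))$, and at a descent step $f(\tilde y_{t+1}) \le f(y_t)$ (which holds because $\hat f_t(\tilde y_{t+1}) \le \hat f_t(y_t) \le f(y_t)$ forces the left side of~\eqref{eq:descent_cond} to be nonnegative). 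Both terms are therefore $\cO(f(y_t) - f(y_\star))$, so the reduction delivers the sharp estimate. The main obstacle is the reduction itself: one must verify that the projection/complementarity structure of $\tilde y_{t+1}$ makes $s_{t+1}$ coordinatewise dominate the infeasibility residual; once that is in hand, the two bounds on $\|s_{t+1}\|$ are routine consequences of the descent inequality and the error bound, with the only subtlety being to invoke Lemma~\ref{lem:error_bound} at the feasible candidate $\tilde y_{t+1}$ rather than only at $y_t$.
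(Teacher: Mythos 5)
Your proposal is correct and follows essentially the same route as the paper's proof: both reduce $\dist(\cA X_{t+1}, \cK)$ to $\rho\,\|y_t - \tilde{y}_{t+1}\|$ via the projection/complementarity structure of the dual update, derive the square-root bound from proximal-step optimality, the minorant property, and the descent condition chained with $f(\tilde{y}_{t+1}) \geq f(y_\star)$, and derive the sharp bound by applying Lemma~\ref{lem:error_bound} (with singularity degree $d=0$ under Slater's condition and uniqueness of $y_\star$) at both $y_t$ and $\tilde{y}_{t+1}$, using $f(\tilde{y}_{t+1}) \leq f(y_t)$ at descent steps. Your explicit coordinatewise verification merely fills in the step the paper's proof dismisses as ``easy to verify,'' and your triangle-inequality split plays the same role as the paper's sum-of-squares split.
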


\begin{proof}
For any descent step $t$, it is easy to verify from~\eqref{eq:dual_update} and~\eqref{eq:dual_slack_update} that 
\begin{equation*}
    \proj_\cK(\cA X_{t+1}) - \cA X_{t+1} \leq \rho (y_t - y_{t+1}),
\end{equation*}
and therefore,
\begin{equation*}
    \| \cA X_{t+1} -  \proj_\cK(\cA X_{t+1})\|^2 \leq \rho^2 \| y_t - y_{t+1} \|^2.
\end{equation*}
To complete the proof we utilize the fact that $\hat{f}_t(y_{t+1}) = \min_{y \in \cY} \hat{f}_t(y) + \frac{\rho}{2}\|y - y_t\|^2$, the fact that $\hat{f}_t(y) \leq f(y)$ for all $y \in \cY$, and the definition of a descent step, which gives
\begin{equation*}
    \frac{\rho}{2} \|y_{t+1} - y_t\|^2 \leq \hat{f}_t(y_t) - \hat{f}_t(y_{t+1}) \leq f(y_t) - \hat{f}_t(y_{t+1}) \leq \frac{f(y_t) - f(y_{t+1})}{\beta} \leq \frac{f(y_t) - f(y_\star)}{\beta}.
\end{equation*}
Assume Slater's condition holds and $y_\star$ is unique. Then, using Lemma~\ref{lem:error_bound} gives
\begin{equation*}
\begin{aligned}
    \| \cA X_{t+1} -  \proj_\cK(\cA X_{t+1})\|^2 &\leq \rho^2 \| y_t - y_{t+1} \|^2\\
    &\leq \rho^2 \left(\|y_t - y_\star\|^2 + \|y_{t+1} - y_\star\|^2\right) \\
    &\leq \cO\!\left((f(y_t) - f(y_\star))^2\right) + \cO\!\left((f(y_{t+1}) - f(y_\star))^2\right) \\
    &\leq  \cO\!\left((f(y_t) - f(y_\star))^2\right).
\end{aligned}
\end{equation*}
\end{proof}

\begin{lemma}
\label{lem:dual_feasibility}
Suppose strong duality holds and $\alpha \geq 2 \mathfrak{N}(\cX_\star)$. Then, at every descent step $t$,
\begin{equation}
\lambda_{\max}(C - \cA\adj y_t) \leq \cO\!\left(f(y_t) - f(y_\star)\right).
\end{equation}
\end{lemma}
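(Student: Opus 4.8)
The plan is to lower-bound the penalized-dual gap $f(y_t)-f(y_\star)$ by a fixed positive multiple of $\lambda_{\max}(C-\cA\adj y_t)$, and then simply rearrange. First observe that any dual-optimal $y_\star$ is dual feasible, so $C-\cA\adj y_\star \preceq 0$, giving $[\lambda_{\max}(C-\cA\adj y_\star)]_+=0$ and hence $f(y_\star)=\langle b, y_\star\rangle = d_\star = p_\star = \langle C, X_\star\rangle$ for any primal-optimal $X_\star \in \cX_\star$ (using strong duality). Since all iterates satisfy $y_t \in \cY$, the indicator term vanishes and $f(y_t)=\alpha[\lambda_{\max}(C-\cA\adj y_t)]_+ + \langle b, y_t\rangle$. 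If $\lambda_{\max}(C-\cA\adj y_t)\le 0$ the claimed bound is immediate because the left side is nonpositive while $f(y_t)-f(y_\star)\ge 0$; so the interesting case is $\lambda_{\max}(C-\cA\adj y_t)>0$, where $[\,\cdot\,]_+$ may be dropped.

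The crucial intermediate step is the inequality $\langle b, y_t\rangle \ge \langle X_\star, \cA\adj y_t\rangle$ for every $y_t\in\cY$. I would prove it by splitting the inner product along the equality/inequality partition: write $\langle b, y_t\rangle = \langle b_{\cI'}, (y_t)_{\cI'}\rangle + \langle b_{\cI}, (y_t)_{\cI}\rangle$, use primal feasibility $\cA_{\cI'}X_\star = b_{\cI'}$ on the first term, and on the second term combine $\cA_\cI X_\star \le b_\cI$ with $(y_t)_\cI \ge 0$ to get $\langle b_\cI, (y_t)_\cI\rangle \ge \langle \cA_\cI X_\star, (y_t)_\cI\rangle$. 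Summing recovers $\langle b, y_t\rangle \ge \langle \cA X_\star, y_t\rangle = \langle X_\star, \cA\adj y_t\rangle$. Substituting this and the value of $f(y_\star)$ yields
\begin{equation*}
f(y_t)-f(y_\star) \;\ge\; \alpha\,[\lambda_{\max}(C-\cA\adj y_t)]_+ \;-\; \langle C-\cA\adj y_t,\, X_\star\rangle.
\end{equation*}

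Next I would control $\langle C-\cA\adj y_t, X_\star\rangle$ using that $X_\star\succeq 0$: for any symmetric $M$ and PSD $X_\star$ one has $\langle M, X_\star\rangle \le \lambda_{\max}(M)\,\tr(X_\star) \le [\lambda_{\max}(M)]_+\,\tr(X_\star)$, since the diagonal contributions in the eigenbasis of $M$ are nonnegative. Applying this with $M=C-\cA\adj y_t$ gives $f(y_t)-f(y_\star)\ge (\alpha - \tr(X_\star))[\lambda_{\max}(C-\cA\adj y_t)]_+$. Because $\tr(X_\star)=\|X_\star\|\nuc \le \mathfrak{N}(\cX_\star) \le \alpha/2$ by the hypothesis $\alpha \ge 2\mathfrak{N}(\cX_\star)$, the factor $\alpha-\tr(X_\star)\ge \alpha/2$, so $f(y_t)-f(y_\star)\ge \tfrac{\alpha}{2}[\lambda_{\max}(C-\cA\adj y_t)]_+$. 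Dividing through and using $\lambda_{\max}\le[\lambda_{\max}]_+$ gives $\lambda_{\max}(C-\cA\adj y_t)\le \tfrac{2}{\alpha}(f(y_t)-f(y_\star)) = \cO(f(y_t)-f(y_\star))$.

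The proof is largely mechanical, but the step I expect to require the most care is the sign bookkeeping: ensuring the equality/inequality decomposition correctly exploits $y_\cI\ge 0$, and verifying that the eigenvalue bound and the penalty bound remain valid across the sign of $\lambda_{\max}$ (so that the single argument covers both cases without a separate treatment). The role of the hypothesis $\alpha\ge 2\mathfrak{N}(\cX_\star)$ is precisely to make the coefficient $\alpha-\tr(X_\star)$ bounded away from zero, which is what turns the penalty into a genuine lower bound on dual infeasibility; this is the conceptual heart of the estimate. Note the argument in fact holds for every $y_t\in\cY$, not only at descent steps.
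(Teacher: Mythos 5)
Your proof is correct and follows essentially the same route as the paper's: both arguments lower-bound the penalized dual gap by a positive multiple of $[\lambda_{\max}(C-\cA\adj y_t)]_+$ using a primal optimal $X_\star$, the trace bound $\langle C-\cA\adj y_t, X_\star\rangle \leq [\lambda_{\max}(C-\cA\adj y_t)]_+\,\tr(X_\star)$, and the hypothesis $\alpha \geq 2\mathfrak{N}(\cX_\star)$ to keep the resulting coefficient bounded away from zero. If anything, your version is slightly more careful on two minor points: where the paper writes $\langle b, y_t - y_\star\rangle = \langle \cA X_\star, y_t - y_\star\rangle$ as an equality (valid only when $\cA X_\star = b$), your split over $\cI$ and $\cI'$ correctly establishes the needed inequality direction in the mixed equality/inequality setting, and your final constant $2/\alpha$ avoids the paper's closing bound $f(y_t)-f(y_\star) \geq \|X_\star\|\nuc\,[\lambda_{\max}(C-\cA\adj y_t)]_+$, which degenerates when $\|X_\star\|\nuc = 0$.
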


\begin{proof}
By definition of strong duality, we know that for any $X_\star \in \cX_\star$ that $\langle C, X_\star \rangle = \langle b, y_\star \rangle$, and equivalently, $\langle X_\star, \cA\adj y_\star - C \rangle = 0$.
Then, the dual objective gap is bounded as follows
\begin{align*}
\langle b, y_t - y_\star \rangle &= \langle \cA X_\star, y_t - y_\star \rangle \\
&= \langle X_\star, \cA\adj(y_t - y_\star)\rangle \\
&= \langle X_\star, (\cA\adj y_t - C) - (\cA\adj y_\star - C) \rangle \\
&= \langle X_\star, \cA\adj y_t - C \rangle \\
&\geq - \| X_\star \|\nuc \max\{\lambda_{\max}(C - \cA\adj y_t), 0\}.
\end{align*}
We now utilize this bound to obtain the desired result
\begin{equation*}
f(y_t) - f(y_\star) = \langle b, y_t - y_\star \rangle + \alpha \max\{ \lambda_{\max} (C - \cA\adj y_t), 0\} \geq \| X_\star \|\nuc \max\{ \lambda_{\max}(C - \cA\adj y_t), 0\},
\end{equation*}
where the last inequality is implied by the assumption that $\alpha \geq 2 \mathfrak{N}(\cX_\star) \geq 2 \|X_\star\|\nuc$.

\end{proof}

\begin{lemma}
\label{lem:primal_dual_opt}
At every descent step $t$,
\begin{equation}
\left|\langle b, y_{t+1} \rangle - \langle C, X_{t+1} \rangle \right| \leq \cO\!\left(f(y_t) - f(y_\star)\right) + \cO\!\left(\sqrt{f(y_t) - f(y_\star)}\right).
\end{equation}
Additionally, if Slater's condition holds and $y_\star$ is unique, then
\begin{equation}
   \left|\langle b, y_{t+1} \rangle - \langle C, X_{t+1} \rangle \right| \leq \cO(f(y_t) - f(y_\star)).
\end{equation}
\end{lemma}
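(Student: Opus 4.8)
The plan is to split the primal--dual gap into a constraint-residual term and a complementary-slackness term and bound each separately, using the optimality structure of $X_{t+1}$ in the model set $\widehat{\cX}_t$. Since $y_{t+1}=\tilde{y}_{t+1}$ at a descent step, I would first write, using $\langle \cA X_{t+1}, y_{t+1}\rangle = \langle X_{t+1}, \cA\adj y_{t+1}\rangle$,
\begin{equation*}
\langle b, y_{t+1}\rangle - \langle C, X_{t+1}\rangle = \underbrace{\langle b - \cA X_{t+1},\, y_{t+1}\rangle}_{(\mathrm{I})} \;-\; \underbrace{\langle X_{t+1},\, C - \cA\adj y_{t+1}\rangle}_{(\mathrm{II})}.
\end{equation*}

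For term $(\mathrm{II})$ the key observation is that the concave map $\psi_t(\cdot,\nu_{t+1})$ has gradient $\nabla_X \psi_t(X_{t+1},\nu_{t+1}) = C - \cA\adj\tilde{y}_{t+1}$ at its maximizer, which follows by differentiating $\psi_t$ in $X$ and substituting the identity $b - \nu_{t+1} - \cA X_{t+1} = \rho(y_t - \tilde{y}_{t+1})$ read off from~\eqref{eq:dual_update}. Because $X_{t+1}$ maximizes $\psi_t(\cdot,\nu_{t+1})$ over the convex set $\widehat{\cX}_t$ and $0\in\widehat{\cX}_t$ (take $\eta=0,\,S=0$ in~\eqref{eq:subspace_def}), the first-order optimality inequality evaluated at $X=0$ gives $\langle X_{t+1}, C - \cA\adj\tilde{y}_{t+1}\rangle \geq 0$, and hence $(\mathrm{II})\geq 0$ since $\tilde{y}_{t+1}=y_{t+1}$. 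For the matching upper bound, $X_{t+1}\succeq 0$ with $\tr(X_{t+1})\leq\alpha$ yields $(\mathrm{II}) \leq \alpha\max\{\lambda_{\max}(C-\cA\adj y_{t+1}),0\}$, which is $\cO(f(y_t)-f(y_\star))$ after applying the estimate underlying Lemma~\ref{lem:dual_feasibility} (valid at any feasible $y$) at the point $y_{t+1}$ and using $f(y_{t+1})\leq f(y_t)$. Thus $|(\mathrm{II})| \leq \cO(f(y_t)-f(y_\star))$.

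For term $(\mathrm{I})$ I would use the complementarity $\langle y_{t+1},\nu_{t+1}\rangle = 0$ together with the same identity $b-\nu_{t+1}-\cA X_{t+1}=\rho(y_t-y_{t+1})$ to rewrite $(\mathrm{I}) = \langle b-\nu_{t+1}-\cA X_{t+1},\, y_{t+1}\rangle = \rho\langle y_t - y_{t+1},\, y_{t+1}\rangle$. Cauchy--Schwarz, the boundedness of $\|y_{t+1}\|$ from Fact~\ref{fact:bounded}, and the descent-step estimate $\tfrac{\rho}{2}\|y_t - y_{t+1}\|^2 \leq \tfrac{1}{\beta}(f(y_t)-f(y_\star))$ established in the proof of Lemma~\ref{lem:primal_feasibility} then give $|(\mathrm{I})| \leq \cO(\sqrt{f(y_t)-f(y_\star)})$. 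Summing with the bound on $(\mathrm{II})$ proves the first claim. For the sharpened statement, under Slater's condition with unique $y_\star$ the singularity degree in Lemma~\ref{lem:error_bound} is $d=0$, so $\|y_t-y_\star\| = \dist(y_t,\cY_\star)\leq\cO(f(y_t)-f(y_\star))$ and similarly for $y_{t+1}$; this upgrades the factor $\|y_t-y_{t+1}\|$ to $\cO(f(y_t)-f(y_\star))$, putting $(\mathrm{I})$ at the same order as $(\mathrm{II})$.

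The main obstacle is bounding $(\mathrm{II})$ from below: a priori $\langle X_{t+1}, C-\cA\adj y_{t+1}\rangle$ could be badly negative, since $C-\cA\adj y_{t+1}$ is (nearly) negative semidefinite while $X_{t+1}\succeq 0$. The entire argument hinges on the variational-inequality fact that the gradient of $\psi_t$ at $X_{t+1}$ is \emph{exactly} $C-\cA\adj\tilde{y}_{t+1}$ and that $0$ belongs to $\widehat{\cX}_t$, which together force this inner product to be nonnegative; identifying that gradient via the $\tilde{y}_{t+1}$ update identity is the crux. The remaining steps are Cauchy--Schwarz combined with the already-established feasibility, boundedness, and error-bound lemmas.
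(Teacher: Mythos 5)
Your proposal is correct and uses the same top-level decomposition as the paper --- splitting the gap into the slack term $\langle X_{t+1}, C - \cA\adj y_{t+1}\rangle$ and the residual term $\langle \cA X_{t+1} - b, y_{t+1}\rangle$, bounding the former at order $f(y_t)-f(y_\star)$ via the estimate inside Lemma~\ref{lem:dual_feasibility} together with $f(y_{t+1})\leq f(y_t)$, and the latter at order $\sqrt{f(y_t)-f(y_\star)}$ via the descent-step bound $\tfrac{\rho}{2}\|y_t-y_{t+1}\|^2 \leq \tfrac{1}{\beta}(f(y_t)-f(y_\star))$ and Fact~\ref{fact:bounded}, with Lemma~\ref{lem:error_bound} upgrading the rate under Slater plus uniqueness. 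Where you genuinely diverge is in making two steps rigorous that the paper glosses over. First, for the residual term the paper invokes Cauchy--Schwarz on $\|\cA X_{t+1} - b\|$ and cites Lemma~\ref{lem:primal_feasibility}, but that lemma controls $\dist(\cA X_{t+1},\cK)$, which differs from $\|\cA X_{t+1}-b\|$ when inequality constraints are present; your rewriting $\langle b - \cA X_{t+1}, y_{t+1}\rangle = \langle b - \nu_{t+1} - \cA X_{t+1}, y_{t+1}\rangle = \rho\langle y_t - y_{t+1}, y_{t+1}\rangle$ via the complementarity $\langle y_{t+1},\nu_{t+1}\rangle = 0$ sidesteps this and handles $\cI \neq \emptyset$ cleanly. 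Second, and more substantively, the paper asserts $|\langle X_{t+1}, C - \cA\adj y_{t+1}\rangle| \leq \cO(\max\{\lambda_{\max}(C-\cA\adj y_{t+1}),0\})$, but positive semidefiniteness of $X_{t+1}$ with $\tr(X_{t+1})\leq\alpha$ only yields the \emph{upper} bound; nothing stated rules out the inner product being very negative, exactly as you observe. Your variational-inequality argument --- the gradient $\nabla_X \psi_t(X_{t+1},\nu_{t+1}) = C - \cA\adj\tilde{y}_{t+1}$ obtained by substituting $b-\nu_{t+1}-\cA X_{t+1} = \rho(y_t-\tilde{y}_{t+1})$ from~\eqref{eq:dual_update}, combined with $0\in\widehat{\cX}_t$ in~\eqref{eq:subspace_def}, forcing $\langle X_{t+1}, C-\cA\adj y_{t+1}\rangle \geq 0$ at a descent step --- supplies the missing lower bound and actually repairs the paper's proof rather than merely reproducing it. The only hypothesis you should flag explicitly is that this relies on $(X_{t+1},\nu_{t+1})$ being an exact block-wise maximizer of $\psi_t$, as line~4 of Algorithm~\ref{alg:spectral_bundle} stipulates (in practice the alternating maximization is run to convergence).
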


\begin{proof}
We start by rewriting the primal-dual gap as follows
\begin{align*}
    \langle C, X_{t+1} \rangle - \langle b, y_{t+1} \rangle
   &= \langle C, X_{t+1} \rangle - \langle \cA X_{t+1}, y_{t+1}\rangle  + \langle \cA X_{t+1} - b, y_{t+1} \rangle \\
   &= \langle X_{t+1}, C - \cA\adj y_{t+1}\rangle + \langle \cA X_{t+1} - b, y_{t+1} \rangle. 
\end{align*}
We will now bound the absolute values of the two resulting terms to bound the desired quantity.
Using the Cauchy-Schwarz inequality and Lemma~\ref{lem:primal_feasibility} it can be seen
\begin{align*}
    |\langle \cA X_{t+1} - b, y_{t+1} \rangle |
    &\leq \|\cA X_{t+1} - b\| \|y_{t+1}\| \\
    &\leq \cO\!\left(\|y_{t+1}\| \sqrt{f(y_t) - f(y_\star)} \right) \\
    &\leq \cO\!\left(\sqrt{f(y_t) - f(y_\star)} \right),
\end{align*}
where the last inequality comes from Fact~\ref{fact:bounded}.
Assume Slater's condition holds and $y_\star$ is unique. Then, using Lemma~\ref{lem:error_bound} gives
\begin{equation*}
    |\langle \cA X_{t+1} - b, y_{t+1} \rangle |\leq \cO\left(f(y_t) - f(y_\star) \right).
\end{equation*}
Since $X_{t+1} \succeq 0$ and $\tr(X_{t+1}) \leq \alpha$ by construction, we can use Lemma~\ref{lem:dual_feasibility} to yield
\begin{align*}
    |\langle X_{t+1}, C - \cA\adj y_{t+1} \rangle |
    &\leq \cO(\max\{C - \cA\adj y_{t+1}, 0 \}) \\
    &\leq \cO(f(y_{t+1}) - f(y_\star)) \\
    &\leq \cO(f(y_t) - f(y_\star)).
\end{align*}
The immediately yields the desired result.
\end{proof}


\subsection{Proof of Theorem~\ref{thm:sublinear_convergence}}
\label{sec:sublinear_convergence_proof}

The overall proof strategy is to use Theorem~\ref{thm:proximal_bundle_convergence} to show that the penalized dual gap, $f(y_t) - f(y_*)$, converges at a worst case rate of $\cO(1 / \varepsilon^3)$, which improves to $\cO(1 / \varepsilon)$ if Slater's condition or strict complementarity hold.
Then, we can use Lemmas~\ref{lem:primal_feasibility}, ~\ref{lem:dual_feasibility}, and~\ref{lem:primal_dual_opt} to obtain the stated convergence of primal feasibility, dual feasibility, and primal-dual optimality.

To apply the result of Theorem~\ref{thm:proximal_bundle_convergence}, we showed  that the norms of the iterates and subgradients of $f$ are bounded (Fact~\ref{fact:bounded}), and so all we need to do is show that the model satisfies the conditions~\eqref{eq:minorant_cond},~\eqref{eq:obj_subgrad_cond}, and \eqref{eq:model_subgrad_cond}.

Let $v$ be a maximum eigenvector $C - \cA\adj \tilde{y}_{t+1}$ \thinspace if \thinspace $\lambda_{\max}(C - \cA\adj \tilde{y}_{t+1}) > 0$ and $v = 0$, otherwise.
Then denote $g_{t+1} = b + \alpha \cA(vv^\top) \in \partial f(\tilde{y}_{t+1})$ as the subgradient of $f$ corresponding $v$ at the candidate iterate $\tilde{y}_{t+1}$.
Let $s_{t+1} = \rho (y_t - \tilde{y}_{t+1}) \in \partial \hat{f}_t(\tilde{y}_{t+1})$ be the aggregate subgradient.
Recall that at iteration $t+1$ the model approximates $\cX$ by the low-dimensional spectral set
\begin{equation*}
\widehat{\cX}_{t+1} := \left\{\eta \bar{X}_{t+1} + V_{t+1} S V_{t+1}^\top :  \eta\, \tr(\bar{X}_{t+1}) + \tr(S) \leq \alpha, \eta \geq 0, S \in \SS^k_+\right\}.
\end{equation*}
and therefore the penalized dual objective and model respectively take the following forms for any $y \in \cY$,
\begin{equation}
    \label{eq:obj_and_model_feasible}
    f(y) = \sup_{X \in \cX} \langle C - \cA\adj y , X\rangle + \langle b, y \rangle \quad \textrm{and} \quad \hat{f}_{t+1}(y) = \sup_{X \in \widehat{\cX}_{t+1}} \langle C - \cA\adj y , X\rangle + \langle b, y \rangle.
\end{equation}

\paragraph{Verifying~\eqref{eq:minorant_cond}.} The condition~\eqref{eq:minorant_cond} follows immediately from~\eqref{eq:obj_and_model_feasible}, since $\widehat{\cX}_{t+1} \subseteq \cX$.

\paragraph{Verifying~\eqref{eq:obj_subgrad_cond}.} Since $V_{t+1}$ spans $v$, there exists a vector $s \in \RR^k$ such that $V_{t+1}s = v$. Taking $\eta = 0$ and $S = \alpha ss^{\!\top}$ implies $\alpha vv^{\!\top} \in \widehat{\cX}_{t+1}$. Hence, 
\begin{equation*}
    \begin{aligned}
    \hat{f}_{t+1}(y) &\geq \langle C - \cA\adj y, \alpha v v^{\!\top} \rangle + \langle b, y \rangle \\
    &= \langle C - \cA\adj, \alpha v v^{\!\top} \rangle + \langle b, \tilde{y}_{t+1} \rangle + \langle b + \alpha \cA (v v^{\!\top}), y - \tilde{y}_{t+1}\rangle \\
    &= f(\tilde{y}_{t+1}) + \langle g_{t+1}, y - \tilde{y}_{t+1} \rangle.
    \end{aligned}
\end{equation*}

\paragraph{Verifying~\eqref{eq:model_subgrad_cond}.} From the first-order optimality conditions and the update step~\eqref{eq:dual_update} we know that
\begin{equation*}
\begin{aligned}
    s_{t+1} &= \rho(y_t - \tilde{y}_{t+1}) = b - \nu_{t+1} - \cA X_{t+1},  \\
    \hat{f}_t(\tilde{y}_{t+1}) &= \langle C - \cA\adj \tilde{y}_{t+1}, X_{t+1} \rangle + \langle b - \nu_{t+1}, \tilde{y}_{t+1} \rangle.
\end{aligned}
\end{equation*}
To show the desired result, we first want to show that $X_{t+1} \in \widehat{X}_{t+1}$. If $k_p = 0$, we are done since $X_{t+1} = \bar{X}_{t+1}$.
Otherwise, $k_p \geq 1$. First note that $\tr(X_{t+1}) \leq \alpha$ by construction.
Then,
\begin{align*}
    X_{t+1} 
    &= \eta_{t+1} \bar{X}_t + V_t S_{t+1} V_t^\top \\
    &= \eta_{t+1} \bar{X}_t + V_t (Q_{\overline{p}} \Lambda_{\overline{p}}  Q_{\overline{p}}^\top + Q_{\underline{c}} \Lambda_{\underline{c}}  Q_{\underline{c}}^\top) V_t^\top \\
    &= \eta_{t+1} \bar{X}_t + V_t Q_{\overline{p}} \Lambda_{\overline{p}}  Q_{\overline{p}}^\top V_t^\top + V_t Q_{\underline{c}} \Lambda_{\underline{c}}  Q_{\underline{c}}^\top V_t^\top \\
    &= \bar{X}_{t+1} +  V_t Q_{\overline{p}} \Lambda_{\overline{p}}  Q_{\overline{p}}^\top V_t^\top.
\end{align*}
Since $V_{t+1}$ spans each of the columns of $V_t Q_{\overline{p}}$, we can conclude that $X_{t+1} \in \widehat{X}_{t+1}$. Thus, for any $y \in \cY$,
\begin{equation*}
    \hat{f}_{t+1}(y) \geq \langle C - \cA\adj y, X_{t+1} \rangle + \langle b - \nu_{t+1}, y \rangle = \hat{f}_t (\tilde{y}_{t+1}) + \langle s_{t+1}, y - \tilde{y}_{t+1}\rangle.
\end{equation*}

\section{Experimental Setup and Details}
\label{sec:add_expt_setup}

Following~\cite{yurtsever2021scalable}, we scale the problem data for all problems such that the following condition holds,
\begin{equation}
    \label{eq:std_scaling}
    \| C \|\fro = \tr(X_\star) = 1.
\end{equation}
In all three problem types, the constraints exactly determine the trace of all optimal solutions.
Since we know the trace of the optimal solution and we apply problem scaling~\eqref{eq:std_scaling}, we can set $\alpha = 2$ for all problem types.

Computing the relative infeasibility is simple. As for the relative primal objective suboptimality, we do not usually have access to the optimal primal objective value $\langle C, X_\star \rangle$, but we can compute an upper bound. For any $y \in \cY$, 
\begin{equation}
\label{eq:objective_subopt_upper_bound}
\begin{aligned}
&f(y) = \alpha \max\{\lambda_\textrm{max}(C - \cA\adj y), 0\} + \langle b, y \rangle \geq \langle C - \cA\adj y, X_\star \rangle - \langle b, y \rangle = \langle C, X_\star \rangle, \\
& \quad \implies \langle C, X \rangle - \langle C, X_\star \rangle \leq \langle C, X \rangle - f(y).
\end{aligned}
\end{equation}
We use this upper bound to approximate the relative primal objective suboptimality in our experiments for USBS.
We use the upper bound given in~\cite{yurtsever2021scalable} to compute the relative primal objective suboptimality for CGAL.

For both CGAL and USBS, we compute eigenvectors and eigenvalues using an implementation of the thick-restart Lanczos algorithm~\cite{wu1999thick,hernandez2007krylov} with 32 inner iterations and a maximum of 10 restarts.
Both CGAL and USBS are implemented using 64-bit floating point arithmetic.
Unless otherwise specified, we use a CPU machine with 16 cores and up to 128 GB of RAM.
Throughout the presentation of the experimental results we use $\uparrow$ to indicate that \emph{higher is better} for that particular metric and $\downarrow$ to indicate \emph{lower is better} for that particular metric.

\subsection{MaxCut}
The MaxCut problem is a fundamental combinatorial optimization problem and its SDP relaxation is a common test bed for SDP solvers.
Given an undirected graph, the MaxCut is a partitioning of the $n$ vertices into two sets such that the number of edges between the two subsets is maximized.
Formally, the MaxCut problem can be written as follows
\begin{equation}
    \label{eq:maxcut}
    \maximize \quad \frac{1}{4} \,x^\top\!L\,x \quad \st \quad x \in \{\pm 1\}^n,
\end{equation}
where $L$ is the graph Laplacian. This optimization problem is known to be NP-hard~\cite{karp1972}, but rounding the solution to the SDP relaxation can yield very strong approximate solutions~\cite{goemans1995improved}.

\subsubsection{SDP Relaxation}
For any vector $x \in \{\pm 1\}^n$, the matrix $X := xx^\top$ is positive semidefinite and all of its diagonal entries are exactly one. 
These implicit constraints allow us to arrive at the MaxCut SDP as follows
\begin{equation}
    \maximize \quad \frac{1}{4} \, \tr(L X) \quad \st \quad \textrm{diag}(X) = \mathbf{1} \textrm{ and } X \succeq 0,
\end{equation}
where $\textrm{diag}(\cdot)$ extracts the diagonal of a matrix into a vector and $\textbf{1} \in \RR^{n}$ is the all-ones vector.
A matrix solution $X_\star$ to~\eqref{eq:maxcut_sdp} does not readily elicit a graph cut. 
One way to round $X_\star$ is to compute a best rank-one approximation $x^{}_\star x_\star^\top$ and use $\textrm{sgn}(x_\star)$ as an approximate cut.
There are more sophisticated rounding procedures (e.g.~\cite{goemans1995improved}), but this one works quite well in practice.

Notice that~\eqref{eq:maxcut_sdp} contains exactly $n$ equality constraints. 
Additionally, in many instances of MaxCut, the graph Laplacian $L$ is sparse and the optimal solution is low-rank. 
This means that we can represent the MaxCut instance with far less than $\cO(n^2)$ memory.
These attributes make sketching the matrix variable $X$ with relatively small $r$ an attractive and effective approach when considering the MaxCut problem.

\subsubsection{Rounding}
Both CGAL and USBS maintain a sketch of the primal variable that, along with the test matrix, can be used to construct a low-rank approximation of the primal iterate $\widehat{X} = U \Lambda U^\top$ where $U \in \RR^{n \times r}$ has orthonormal columns.
Following~\cite{yurtsever2021scalable}, we evaluate the size of $r$ cuts given by the column vectors of $\textrm{sgn}(U)$ and choose the largest.

\subsubsection{Experimental Setup}

We evaluate the CGAL and USBS with and without warm-starting on ten instances from the DIMACS10~\cite{dimacs10} where $n$ and the number of edges in each graph are shown in~\autoref{tab:maxcut_dataset_stats}.
We warm-start each method by dropping the last 1\% of vertices from the graph resulting in a graph with 99\% of the vertices.
We pad the solution from the 99\%-sized problem with zeros and rescale as necessary to create the warm-start initialization.
Unless otherwise specified, we use the following hyperparameters for USBS in our experiments: $r=10$, $\rho=0.01$, $\beta=0.25$, $k_c = 10$, $k_p = 1$. In our experiments, we find that $k_p > 0$ does improve performance slightly.
See the implementation for more details.



\subsection{Quadratic Assignment Problem}

The quadratic assignment problem (QAP) is a very difficult but fundamental class of combinatorial optimization problems containing the traveling salesman problem, max-clique, bandwidth problems, and facilities location problems among others~\cite{loiola2007survey}. 
SDP relaxations have been shown to facilitate finding good solutions to large QAPs~\cite{zhao1998semidefinite}. 
The QAP can be formally defined as follows
\begin{equation}
    \label{eq:qap}
    \minimize \quad \tr\left(W \Pi D \Pi^\top\right) \quad \st \quad \Pi \textrm{ is an } \smtt{n} \times \smtt{n} \textrm{ permutation matrix},
\end{equation}
where $W \in \SS^\smtt{n}$ is the weight matrix, $D \in \SS^\smtt{n}$ is the distance matrix, and the goal is to optimize for an assignment $\Pi$ which aligns $W$ and $D$.
The number of $\smtt{n} \times \smtt{n}$ permutation matrices is $\smtt{n}!$, so a brute-force search becomes quickly intractable as \smtt{n} grows. Generally, QAP instances with $\smtt{n} > 30$ are intractable to solve exactly. In our experiments, using an SDP relaxation and rounding procedure, we can obtain good solutions to QAPs where \smtt{n} is between 136 and 198.
 
\subsubsection{SDP Relaxation}
There are many SDP relaxtions for QAPs, but we consider the one presented by~\cite{yurtsever2021scalable} and inspired by~\cite{huang2014scalable, bravo2018semidefinite} which is formulated as follows
\begin{equation}
    \begin{aligned}
        &\minimize  \quad \tr \big( (D \otimes W) \, \smsf{Y} \big) \\
        &\;\st \;\;\, \tr_1(\smsf{Y}) = I, \; \tr_2(\smsf{Y}) = I, \; \mathcal{G}(\smsf{Y}) \geq 0, \\
        &\qquad\;\; \textrm{vec}(\smsf{B}) = \textrm{diag}(\smsf{Y}), \; \smsf{B} \textbf{1} = \textbf{1}, \; \textbf{1}^\top \smsf{B} = \textbf{1}^\top, \; \smsf{B} \geq 0, \\
        &\qquad\;\; X := \begin{bmatrix}
            1 & \textrm{vec}(\smsf{B})^\top \\
            \textrm{vec}(\smsf{B}) & \smsf{Y}
        \end{bmatrix} \succeq 0, \; \tr(\smsf{Y}) = \smtt{n}
    \end{aligned}
\end{equation}
where $\otimes$ denotes the Kronecker product, $\tr_1(\cdot)$ and $\tr_2(\cdot)$ denote the partial trace over the first and second systems of Kronecker product respectively, $\mathcal{G}(\smsf{Y})$ extracts the entries of $\smsf{Y}$ corresponding to the nonzero entries of $D \otimes W$, and $\textrm{vec}(\cdot)$ stacks the columns of a matrix one on top of the other to form a vector.
In many cases, one of $D$ or $W$ is sparse (i.e.~$\mathcal{O}(n)$ nonzero entries) resulting in $\mathcal{O}(\smtt{n}^3)$ total constraints for the SDP.

The primal variable $X$ has dimension $(\smtt{n}^2 + 1) \times (\smtt{n}^2 + 1)$ and as a result the SDP relaxation has $\mathcal{O}(\smtt{n}^4)$ decision variables.
Given the aggressive growth in complexity, most SDP based algorithms have difficulty operating on instances where $\smtt{n} > 50$.
To reduce the number of decision variables, we sketch $X$ with $r = \smtt{n}$, resulting in $\mathcal{O}(\smtt{n}^3)$ entries in the sketch and the same number of constraints in most natural instances.
We show that this enables CGAL and USBS to scale to instances where $\smtt{n} = 198$ (more than 1.5 billion decision variables and constraints).

\subsubsection{Rounding}
We adopt the rounding method presented in~\cite{yurtsever2021scalable} to convert an approximate solution of~\eqref{eq:qap_sdp} into a permutation matrix.
Reconstructing an approximation to the primal variable from the sketch yields $\widehat{X} = U \Lambda U^\top\!$~where $U$ has shape $(\smtt{n}^2 + 1) \times \smtt{n}$. For each column of $U$, we discard the first entry of the column vector, reshape the remaining $\smtt{n}^2$ entries into an $\smtt{n} \times \smtt{n}$ matrix, and project the reshaped matrix onto the set of $\smtt{n} \times \smtt{n}$ permutation matrices using the Hungarian method (also known as Munkres' assignment algorithm)~\cite{kuhn1955hungarian,munkres1957algorithms, jonker1988shortest}.
This procedure yields a feasible point to the original QAP~\eqref{eq:qap}.
To get an upper bound for~\eqref{eq:qap}, we perform the rounding procedure on each column of $U$ and choose the permutation matrix $\Pi$ which minimizes the objective $\tr\!\left(W \Pi D \Pi^\top\right)$.

\subsubsection{Experimental Setup}
We evaluate CGAL and USBS on select large instances from \smtt{QAPLIB}~\cite{burkard1997qaplib} and \smtt{TSPLIB}~\cite{reinelt1995tsplib95} ranging in size from 136 to 198.
Provided with each of these QAP instances is the known optimum.
For many instances, we find that the quality of the permutation matrix produced by the rounding procedure does not entirely correlate with the quality of the iterates with respect to the SDP~\eqref{eq:qap_sdp}.
Thus, we apply the rounding procedure at every iteration of both CGAL and USBS.
Our metric for evaluation is \smtt{relative gap} which is computed as follows
\begin{equation*}
    \smtt{relative gap} = \frac{\smtt{upper bound obtained} - \smtt{optimum}}{\smtt{optimum}}.
\end{equation*}
We report the lowest value so far of \smtt{relative gap} as \smtt{best relative gap}.
CGAL~\cite{yurtsever2021scalable} is shown to obtain significantly smaller \smtt{best relative gap} than CSDP~\cite{bravo2018semidefinite} and PATH~\cite{zaslavskiy2008path} on most instances, and thus, is a strong baseline.

To create a warm-start initialization, we create a slightly smaller QAP by dropping the final row and column of both $D$ and $W$ (i.e. solve a size $\smtt{n} - 1$ subproblem of the original instance).
We use the solution to slightly smaller problem to set a warm-start initialization for the original problem, rescaling and padding with zeros where necessary.
We stopped optimizing after one hour. When warm-starting, we optimize the slightly smaller problem for one hour and then optimize the original problem for one hour.

Following~\cite{yurtsever2021scalable}, we apply the following scaling of the problem variables in~\eqref{eq:qap_sdp}
\begin{equation}
    \| C \|\fro = \tr(X_\star) = \|\cA\|\op = 1 \quad \textrm{and} \quad \| A_1 \|\fro = \cdots = \|A_m\|\fro.
\end{equation}
We use the following hyperparameters for USBS in our experiments: $\rho=0.005$, $\beta=0.25$, $k_c = 2$, $k_p = 0$. In our experiments, we find that $k_p > 0$ does not improve performance.
See the implementation for more details.

\subsection{Interactive Entity Resolution with $\exists$-constraints}
\label{sec:add_ecc}
\citet{pmlr-v162-angell22a} introduced a novel SDP relaxation of a combinatorial optimization problem that arises in an automatic knowledge base construction problem~\citep{uhlen2010towards,krishnamurthy2015learning,iv2022fruit}.
They consider the problem of interactive entity resolution with user feedback to correct predictions made by a machine learning algorithm.
Entity resolution (also known as coreference resolution or record linkage) is the process of identifying which mentions (sometimes referred to as records) of entities refer to the same entity~\cite{binette2022almost, angell2020clustering, agarwal2021entity, agarwal2022entity, yadav2021event}.
Entity resolution is an important problem central to automated knowledge base construction where the correctness of the resulting knowledge base is vital to its usefulness.
The entity resolution problem is fundamentally a clustering problem where the prefect clusters contain all mentions of exactly one entity.
Due to the large volumes of raw data frequently involved, machine learning approaches are often used to perform entity resolution.
To correct inevitable errors in the predictions made by these automated methods, human feedback can be used to correct errors in entity resolution.

Utilizing human feedback to guide and correct clustering decisions can be broadly categorized into two groups~\cite{bae2020interactive}: active clustering (also know as semi-supervised clustering)~\cite{viswanathan2023large, vikram2016interactive, mazumdar2017clustering, sato2009interactive, xiong2016active} and interactive clustering~\cite{chuang2014human, basu2010assisting, coden2017method, dubey2010cluster}\footnote{Commonly, \emph{active} clustering is used to describe techniques where the algorithm queries the human for feedback about a specific pair or group of points or clusters (akin to active learning) and \emph{interactive} clustering is used to describe techniques where humans observe the clustering output and provide unelicited feedback, typically in the form of some type of constraint, to the algorithm.}.
There are several paradigms of human feedback used to steer a clustering algorithm~\cite{frome2007learning, wagstaff2001constrained,shental2004computing,klein2002instance, kulis2009semi,lu2008constrained,li2008pairwise,li2009constrained,xing2002distance}, the most common of which is pairwise constraints~\cite{wagstaff2000clustering} (i.e.~statements about whether two points or mentions must or cannot be clustered together).
Recently, \citet{pmlr-v162-angell22a} introduced \emph{\econstraints\!\!}, a new paradigm of interactive feedback for correcting entity resolution predictions and presented a novel SDP relaxation as part of a heuristic algorithm for satisfying \econstraints in the predicted clustering.
This novel form of feedback allows users to specify constraints stating the existence of an entity with and without certain features.

\subsubsection{\econstraints}
To define \econstraints precisely, we must introduce some terminology and notation. 
Let the set of mentions be represented by the vertex set $V$ of a fully connected graph. We assume that each of the mentions $v \in V$ has an associated set of discrete features (attributes, relations, other features, etc.)~that is readily available or can be extracted easily using some automated method.
Let $\Phi = \{\phi_1, \phi_2, \ldots\}$ be the set of possible features and $\Phi(v) \subseteq \Phi$ be the subset of features associated with vertex $v \in V$.
A clustering $\cC = \{C_1, C_2, \ldots, C_m\}$ of vertices $V$ is a set of nonempty sets such that $C_i \subseteq V$, $C_i \cap C_j = \emptyset$ for all $i \ne j$, and $\bigcup_{C \in \cC} C = V$.
We denote the ground truth clustering of the vertices (partitioning mentions into entities) as $\cC^\star$.
Furthermore, for any subset of vertices $S \subseteq V$, we define the set of features associated with $S$ as $\Phi(S) = \bigcup_{v \in S} \Phi(v)$. 
For any cluster $C_i$, we use $\Phi(C_i)$ as the canonicalization function for a predicted entity, but this framework allows for more complex (and even learned) canonicalization functions.
When providing feedback, users are able to view the features of predicted clusters and then provide one or more \econstraints\!\!.

Formally, a \econstraint $\xi \subseteq \{+, -\} \times \Phi$ uniquely characterizes a constraint asserting there exists a cluster $C \in \cC^\star$ such that all of the positive features $\xi^{+} := \{\phi \in \Phi: (+, \phi) \in \xi\} $ are contained in the features of $C$ (i.e. $\xi^{+}\subseteq \Phi(C)$) and none of the negative features $\xi^{-} := \{\phi \in \Phi: (-, \phi) \in \xi\}$ are contained in the features of $C$ (i.e. $\xi^{-} \cap \Phi(C) = \emptyset$).
At any given time, let the set of \econstraints be represented by $\Xi$.
We say that a subset of nodes $S \subseteq V$ \emph{satisfies} a \econstraint $\xi$ if $\xi^{+}\subseteq \Phi(S)$ and $\xi^{-} \cap \Phi(S) = \emptyset$.
Note that more than one ground truth cluster can satisfy a \econstraint\!\!.
We say that a subset of vertices $S \subseteq V$ is \emph{incompatible} with a \econstraint $\xi$ if $\Phi(S) \cap \xi^{-} \ne \emptyset$ and denote this as $S \perp \xi$. Similarly, two \econstraints $\xi_a, \xi_b$ are incompatible if $\xi_a^+ \cap \xi_b^{-} \ne \emptyset$ or $\xi_a^- \cap \xi_b^{+} \ne \emptyset$, and is also denoted $\xi_a \perp \xi_b$. Furthermore, we say a subset of vertices $S \subseteq V$ and a \econstraint $\xi$ or two \econstraints $\xi_a, \xi_b$ are \emph{compatible} if they are not incompatible and denote this as $S \not\perp \xi$ and $\xi_a \not \perp \xi_b$, respectively.
Moreover, we use \emph{compatible} to describe when a subset of vertices $S \subseteq V$ could be part of a cluster that \emph{satisfies} a certain $\exists$-constraint $\xi$. This definition allows for $\Phi(S) \cap \xi^+$ to be empty, but still enables $S$ to be compatible with $\xi$ just as long as $\Phi(S) \cap \xi^-$ is empty.
Observe that if every mention has a unique feature, the \econstraint framework fully encompasses must-link and cannot-link constraints.

\subsubsection{SDP relaxation}
Following~\cite{pmlr-v162-angell22a}, the general approach to clustering with \econstraints is to jointly cluster the vertices and the \econstraints\!\!, so we include \econstraints as additional vertices of the graph $V$ (i.e. $\Xi \subset V$) and create corresponding decision variables for each of the \econstraints\!\!.
Additionally, we create constraints to ensure that all of the \econstraints are satisfied and none of the \econstraints are incompatible with any of the other members of the predicted clustering.
Given a \econstraint $\xi$ and positive feature $\phi \in \xi^{+}$, let  $\Gamma(\xi, \phi) := \{ v \in V \setminus \Xi: \phi \in \Phi(v) \land \Phi(v) \cap \xi^{-} = \emptyset\}$ be the set of candidate vertices that could satisfy $\phi \in \xi^{+}$.
The resulting integer linear programming problem is as follows
\begin{equation}
    \label{eq:exact_cc_w_econstraints}
    \maximize_{x_u, x_v\in \{e_1, e_2, \ldots, e_n \}} \sum_{(u,v) \in V \times V}\!\! w_{uv} \, x_u^\top x_v 
    \;\; \st \;\; 
    \begin{cases}
     \sum_{v \in \Gamma(\xi, \phi)} \; x_v^\top x_{\xi} \geq 1,   & \textrm {for all } \phi \in \xi^{+} \\
     x_v^\top x_{\xi} = 0, &  \textrm{for all } v \perp \xi
    \end{cases}
\end{equation}
where $w_{uv} \in \RR$ is the pairwise similarity between mentions $u$ and $v$ (usually computed using a trained machine learning model trained on pairs of mentions) and $e_i \in \RR^{n}$ is the $i^{\textrm{th}}$ standard basis vector.
Note that we assume that the $w_{v\xi} = 0$ as in~\cite{pmlr-v162-angell22a}.
The first constraint ensures that all of the positive features in $\xi^+$ are contained in the same cluster as $\xi$ for all $\xi \in \Xi$. The last constraint ensures that $\xi$ is not in the same cluster as anything incompatible with it for all $\xi \in \Xi$.
This problem is intractable to optimize exactly, so we relax (\ref{eq:exact_cc_w_econstraints}) to an SDP in the standard way as follows
\begin{equation}
    \label{eq:sdp_cc_w_econstraints}
    \maximize_{X \succeq 0}  \;\; \langle W, X \rangle \;\; \st \;\;
    \begin{cases}
     X_{vv} = 1, & \textrm{for all } v \in V \\
     X_{uv} \geq 0, & \textrm{for all } u, v \in V \\
     \sum_{v \in \Gamma(\xi, \phi)} \;  X_{v\xi} \geq 1,   & \textrm {for all } \phi \in \xi^{+} \\
     X_{v\xi} = 0, &  \textrm{for all } v \perp \xi
    \end{cases}
\end{equation}
The decision variables  are changed from standard basis vectors, whose dot products determine whether or not two elements (vertices or \econstraints\!\!) are in the same cluster, to a positive semidefinite matrix where each entry represents the global affinity two elements have for one another (corresponding to the row and column of the entry).
We constrain the diagonal of this positive semidefinite matrix to be all ones (representing each element's affinity with itself) and we enforce that all entries in the matrix are non-negative.
The constraints ensuring \econstraint satisfaction are similar to (\ref{eq:exact_cc_w_econstraints}).
Let $W$ be the (weighted) adjacency matrix of the fully connected graph over the $n$ vertices.

\subsubsection{Rounding}
The solution $X_\star$ to~\eqref{eq:sdp_cc_w_econstraints} is a fractional approximation of the problem we wish to solve.
In order to infer a predicted clustering, we need some procedure to round the fractional solution to an integer solution.
\citet{pmlr-v162-angell22a} propose first building a hierarchical clustering over $V$ using $X_\star$ as the similarity function.
Then, a dynamic programming algorithm is used to extract a (predicted) flat clustering from the hierarchical clustering which maximizes the number \econstraints satisfied and the (correlation) clustering objective.
For a more detailed description of the rounding procedure, see~\cite{pmlr-v162-angell22a}. The predicted clustering is then observed by the human users in terms of the features of each predicted cluster enabling the users to provide any additional \econstraints to correct the entity resolution decisions. We simulate this human feedback loop with an oracle \econstraint generator.

\subsubsection{Experimental Setup}
We evaluate the performance of the CGAL and USBS with and without warm-starting on the same three author coreference datasets used in~\cite{pmlr-v162-angell22a}.
In these datasets, each mention is an author-paper pair and the goal is to identify which author-paper pairs were written by the same author.
As standard in author coreference, the datasets are preprocessed into \emph{blocks} (also known as \emph{canopies}) based on the author's first name initial and last name (e.g. authors ``Rajarshi Das'' and ``Ravi Das'' would both be contained in the same block ``\smtt{r das}'', but in a different block than ``Jane Smith'', which would be in ``\smtt{j smith}'').
Within each block, similarity scores are computed between all pairs of mentions using a trained pairwise model~\cite{pmlr-v162-angell22a, subramanian2021s2and}.
We then perform two additional preprocessing steps to convert the many small dense problems into one large sparse problem.
We aggregate all of these pairwise similarity scores into a block diagonal weight matrix and fill the remaining entries with negative one.
We then sparsify this highly structured similarity matrix using a spectral sparsifier~\cite{spielman2008graph}.
\autoref{tab:ecc_dataset_stats} details the dataset statistics including the number of non-zeros in the pairwise similarity matrix after spectral sparsification, denoted nnz($W$).

We simulate \econstraint generation using the same oracle implemented in~\cite{pmlr-v162-angell22a}.
The oracle has access to the features of the ground truth clusters and the features of the predicted clusters output by the rounding algorithm and generates a small \econstraint which is satisfied by the ground truth clustering, but is not satisfied by the predicted clustering.
The oracle generates \econstraints one at a time and added to the optimization problem until the ground truth clustering is predicted.
For both solvers, we iterate until the relative error tolerance~\eqref{eq:relative_error_tol} and the following absolute infeasibility condition are satisfied
\begin{equation}
    \|\cA X - \proj_\cK(\cA X)\|_\infty \leq \varepsilon.
\end{equation}
We use $\varepsilon = 10^{-1}$ to determine the stopping condition and find that this is sufficient for the rounding algorithm to satisfy nearly all of the \econstraints generated by the oracle.
We also note that the $\ell^\infty$-norm condition is especially important in this application in order to make sure the relaxed \econstraints are satisfied.
Due to the small problem size, we simplify the implementation by using variants of CGAL and USBS without sketching.

When warm-starting, we predict the values of the expanded primal variable corresponding to the newly added \econstraint by computing representations of each vertex using a low-rank factorization, performing a weighted average of the representations of the positive vertices in the newly added \econstraint\!\!, and expanding the factorization into an initialization of primal variable for the new problem.
We pad the dual variable with zeros.
Since there is no pairwise similarity between \econstraints and mentions, we find that scaling up the norms of $A_i$'s corresponding to the constraints $X_{vv} = 1$ for all $v$ such that $\Phi(v) \cap \xi_\textrm{new} \ne \emptyset$, where $\xi_\textrm{new}$ is the newly added \econstraint\!\!.
We use the following USBS hyperparameters in our experiments: $\rho = 0.01$, $\beta = 0.25$, $k_c = 3$, $k_p = 0$.
See the implementation for more details.

\section{Additional Experimental Results}
\label{sec:add_expt_results}

\subsection{MaxCut}

\autoref{fig:maxcut_plot_144} plots three different convergence measures over time for a large data instance, $\smsf{144}$, for a total time of 12 hours. For this instance, warm-starting helps achieve a faster convergence rate of both relative primal objective suboptimality and relative infeasibility. Additionally, the weight of the cut produced from the primal iterates generated by USBS drops in value less when warm-starting as compared with CGAL. This is an additional indication that USBS is able to utilize the warm-start initialization better than CGAL. Finally, observe that USBS, with and without warm-starting, produces a marginally better graph cut than CGAL.
\begin{figure*}[h!]
\captionsetup[subfigure]{labelformat=empty}
\subfloat[]{%
  \includegraphics[width=0.32\textwidth]{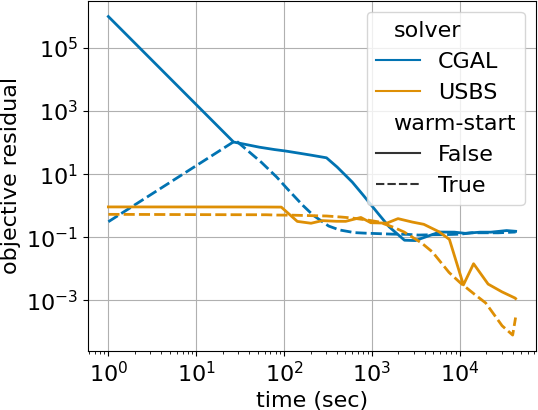}
}
\hfill
\subfloat[]{%
  \includegraphics[width=0.32\textwidth]{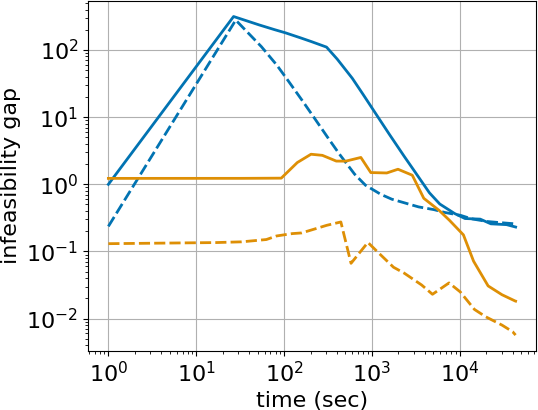}
}
\hfill
\subfloat[]{%
  \includegraphics[width=0.32\textwidth]{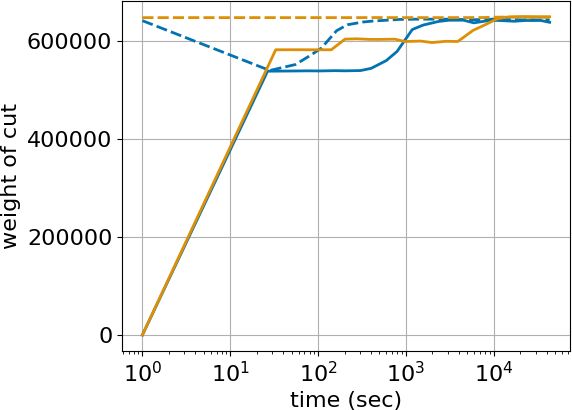}
}
\vskip -0.2in
\caption{\textbf{Convergence measures on instance} \smsf{144}\textbf{.}
We solve instance \smsf{144} from DIMACS10 and plot the primal objective sub-optimality (objective residual, $\downarrow$), relative infeasibility (infeasibility gap, $\downarrow$), and weight of the cut ($\uparrow$) produced by the rounding procedure. The warm-started runs use 99\% of the original data to obtain a warm-start initialization. We observe that USBS is able to more reliably leverage a warm-start initialization. In these plots, USBS is executed with $k_c = 8, k_p = 8$. All runs were executed on a compute node with 16 cores and 128GB of RAM.}
\label{fig:maxcut_plot_144}
\vskip -0.1in
\end{figure*}
\begin{figure*}[h!]
\subfloat[$k_p = 0$\label{subfig:infeas-cpu-0}]{%
  \includegraphics[width=0.32\textwidth]{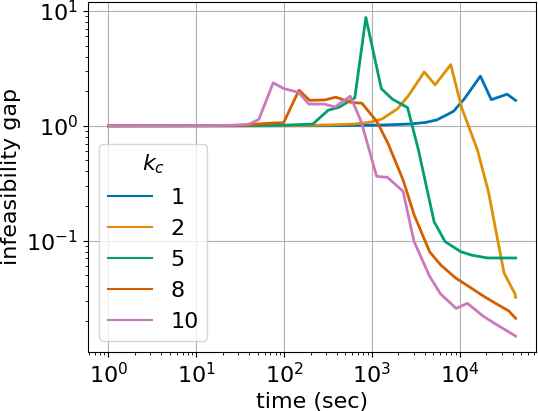}
}
\hfill
\subfloat[$k_p = 1$\label{subfig:infeas-cpu-1}]{%
  \includegraphics[width=0.32\textwidth]{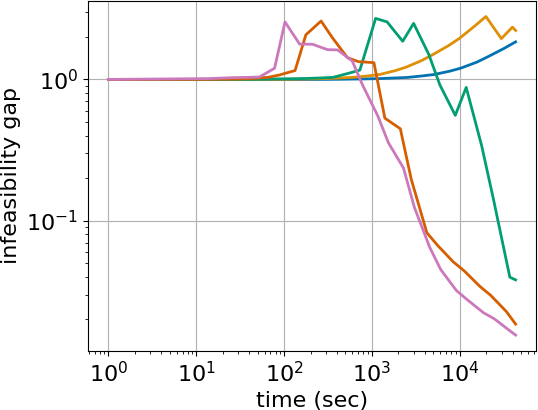}
}
\hfill
\subfloat[$k_p = 2$\label{subfig:infeas-cpu-2}]{%
  \includegraphics[width=0.32\textwidth]{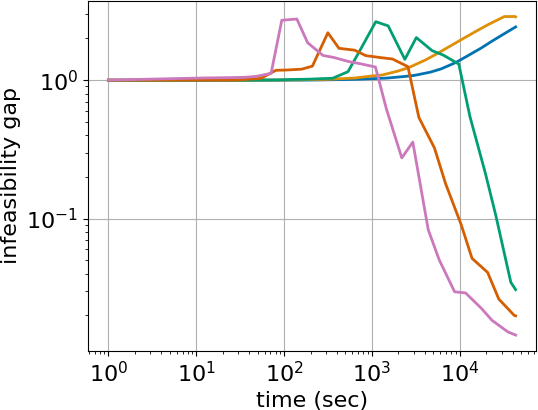}
}

\subfloat[$k_p = 5$\label{subfig:infeas-cpu-5}]{%
  \includegraphics[width=0.32\textwidth]{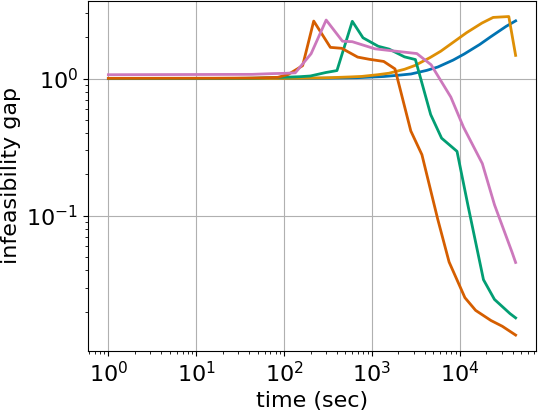}
}
\hfill
\subfloat[$k_p = 8$\label{subfig:infeas-cpu-8}]{%
  \includegraphics[width=0.32\textwidth]{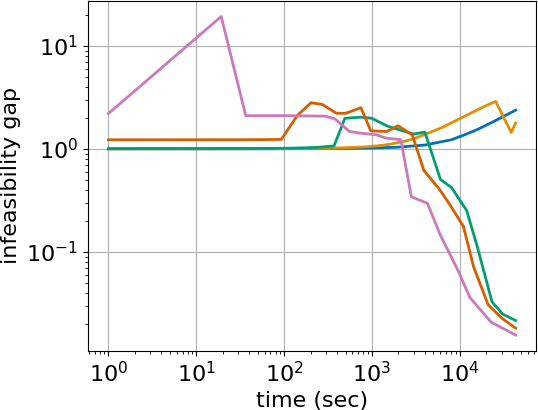}
}
\hfill
\subfloat[$k_p = 10$\label{subfig:infeas-cpu-10}]{%
  \includegraphics[width=0.32\textwidth]{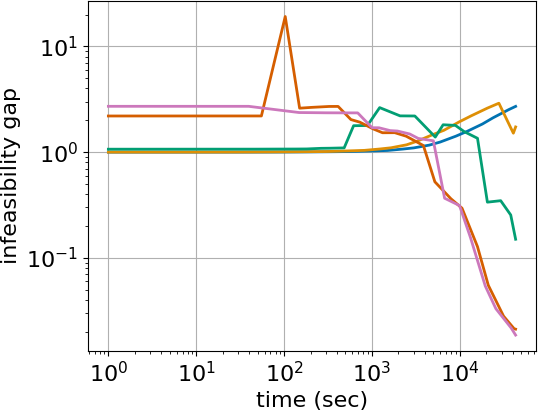}
}
\caption{
\textbf{Infeasibility gap vs. time for different settings of $k_c$ and $k_p$.} In each plot, the x-axis is time (up to 12 hours) and the y-axis is the relative infeasibility gap. In every case, USBS is cold-started on the \smsf{144} instance from the \texttt{DIMACS10} dataset. Each plot considers one value of $k_p$ and several values of $k_c$. All runs were executed on a compute node with 16 cores and 128GB of RAM. We observe that USBS performs best when $k_c \geq k_p$.} 
\label{fig:infeas_grid_cpu}
\end{figure*}
\begin{figure*}[h!]
\subfloat[$k_p = 0$\label{subfig:infeas-gpu-0}]{%
  \includegraphics[width=0.32\textwidth]{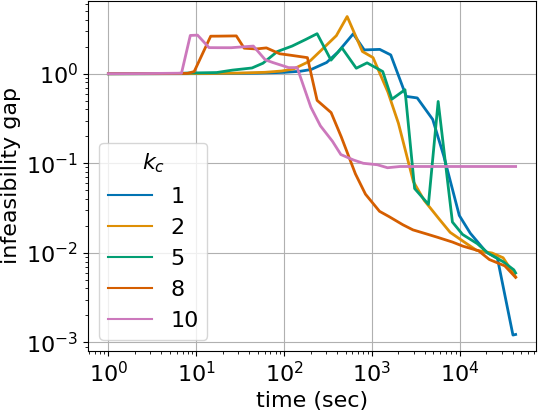}
}
\hfill
\subfloat[$k_p = 1$\label{subfig:infeas-gpu-1}]{%
  \includegraphics[width=0.32\textwidth]{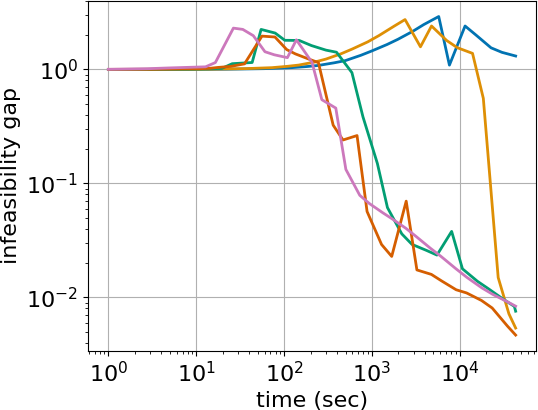}
}
\hfill
\subfloat[$k_p = 2$\label{subfig:infeas-gpu-2}]{%
  \includegraphics[width=0.32\textwidth]{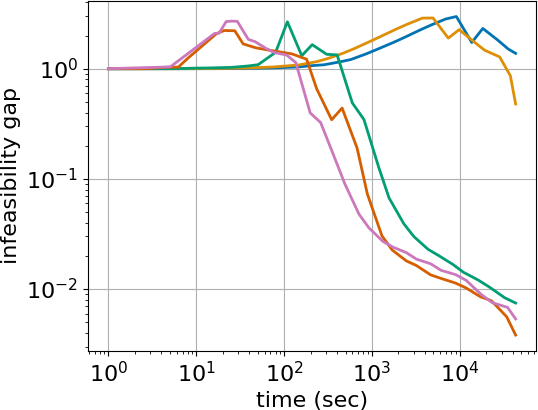}
}

\subfloat[$k_p = 5$\label{subfig:infeas-gpu-5}]{%
  \includegraphics[width=0.32\textwidth]{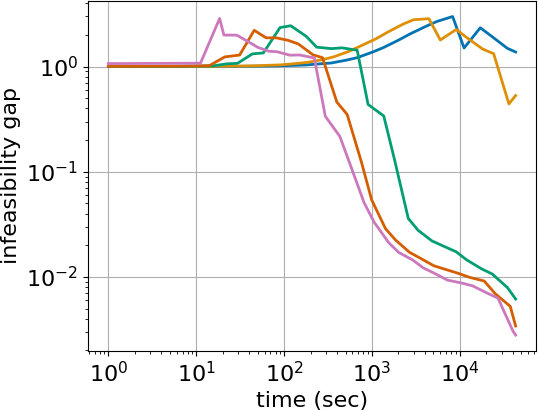}
}
\hfill
\subfloat[$k_p = 8$\label{subfig:infeas-gpu-8}]{%
  \includegraphics[width=0.32\textwidth]{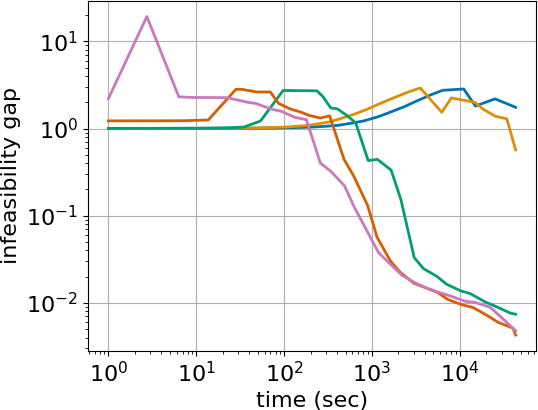}
}
\hfill
\subfloat[$k_p = 10$\label{subfig:infeas-gpu-10}]{%
  \includegraphics[width=0.32\textwidth]{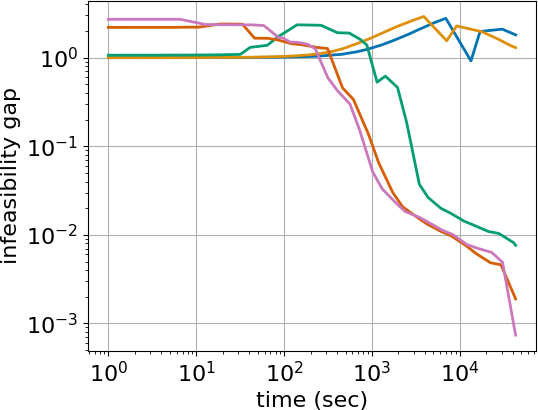}
}
\caption{\textbf{Infeasibility gap vs. time for different settings of $k_c$ and $k_p$.} In each plot, the x-axis is time (up to 12 hours) and the y-axis is the relative infeasibility gap. In every case, USBS is cold-started on the \smsf{144} instance from the \texttt{DIMACS10} dataset. Each plot considers one value of $k_p$ and several values of $k_c$. All runs were executed on a single NVIDIA GeForce 1080 Ti GPU. We observe that USBS performs best when $k_c \geq k_p$.} 
\label{fig:infeas_grid_gpu}
\end{figure*}
\begin{figure*}[h!]
\subfloat[$k_p = 0$\label{subfig:obj-cpu-0}]{%
  \includegraphics[width=0.32\textwidth]{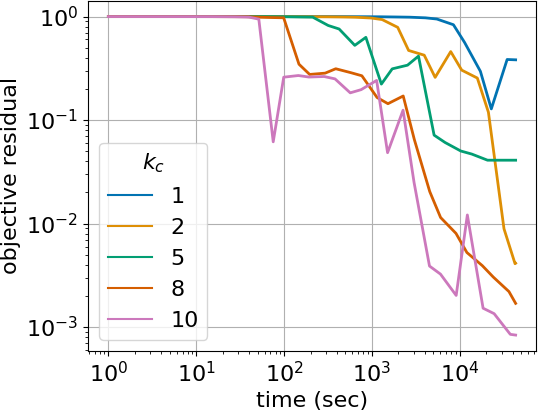}
}
\hfill
\subfloat[$k_p = 1$\label{subfig:obj-cpu-1}]{%
  \includegraphics[width=0.32\textwidth]{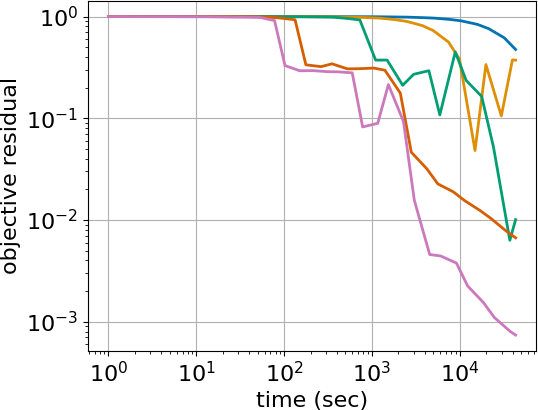}
}
\hfill
\subfloat[$k_p = 2$\label{subfig:obj-cpu-2}]{%
  \includegraphics[width=0.32\textwidth]{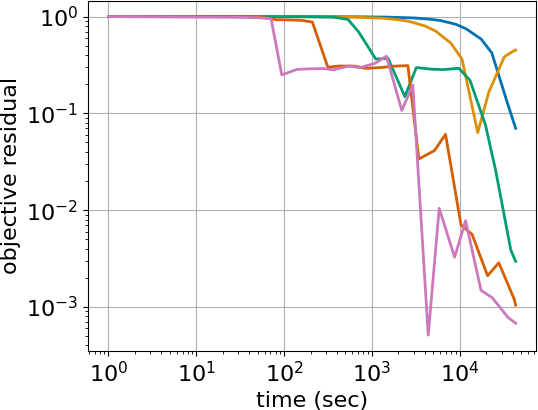}
}

\subfloat[$k_p = 5$\label{subfig:obj-cpu-5}]{%
  \includegraphics[width=0.32\textwidth]{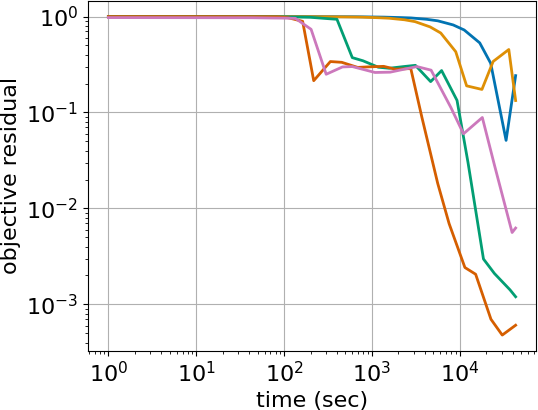}
}
\hfill
\subfloat[$k_p = 8$\label{subfig:obj-cpu-8}]{%
  \includegraphics[width=0.32\textwidth]{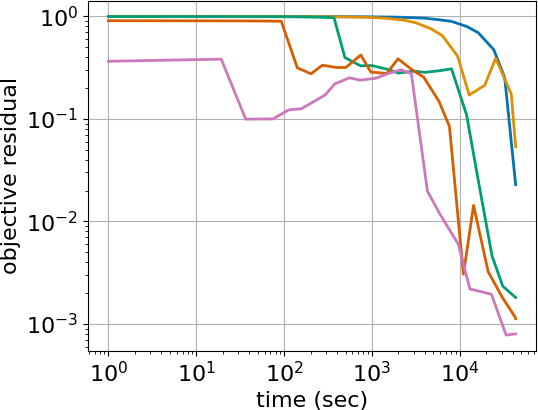}
}
\hfill
\subfloat[$k_p = 10$\label{subfig:obj-cpu-10}]{%
  \includegraphics[width=0.32\textwidth]{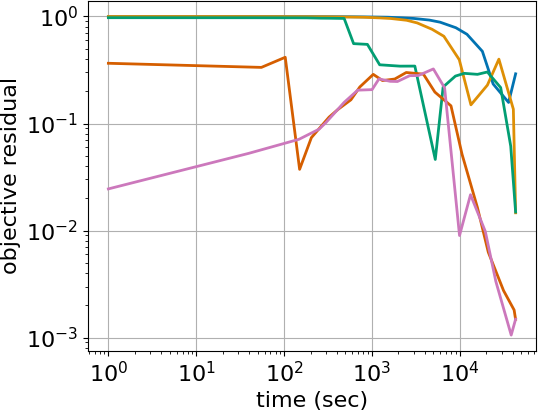}
}
\caption{\textbf{Objective gap vs. time for different settings of $k_c$ and $k_p$.} In each plot, the x-axis is time (up to 12 hours) and the y-axis is the primal objective suboptimality. In every case, USBS is cold-started on the \smsf{144} instance from the \texttt{DIMACS10} dataset. Each plot considers one value of $k_p$ and several values of $k_c$. All runs were executed on a compute node with 16 cores and 128GB of RAM. We observe that USBS performs best when $k_c \geq k_p$.}
\label{fig:obj_grid_cpu}
\end{figure*}
\begin{figure*}[h!]
\subfloat[$k_p = 0$\label{subfig:obj-gpu-0}]{%
  \includegraphics[width=0.32\textwidth]{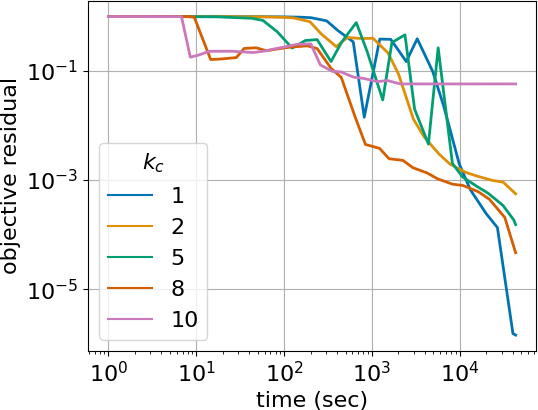}
}
\hfill
\subfloat[$k_p = 1$\label{subfig:obj-gpu-1}]{%
  \includegraphics[width=0.32\textwidth]{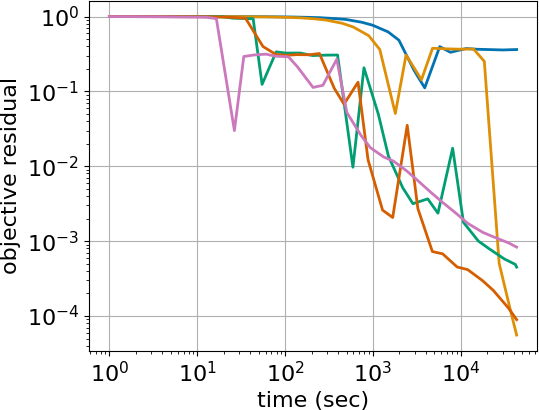}
}
\hfill
\subfloat[$k_p = 2$\label{subfig:obj-gpu-2}]{%
  \includegraphics[width=0.32\textwidth]{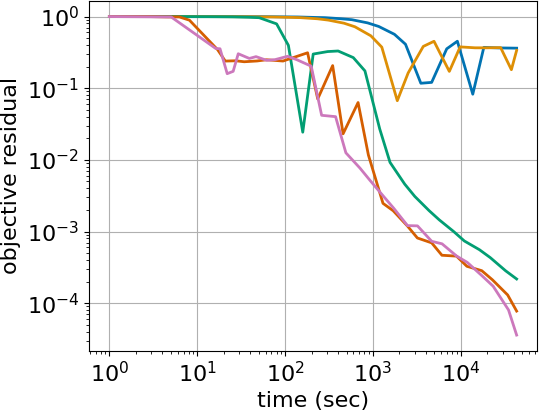}
}

\subfloat[$k_p = 5$\label{subfig:obj-gpu-5}]{%
  \includegraphics[width=0.32\textwidth]{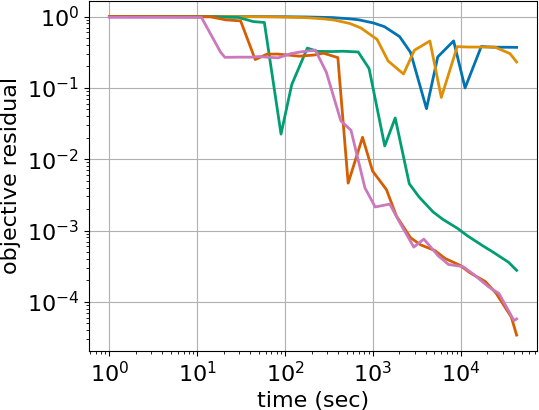}
}
\hfill
\subfloat[$k_p = 8$\label{subfig:obj-gpu-8}]{%
  \includegraphics[width=0.32\textwidth]{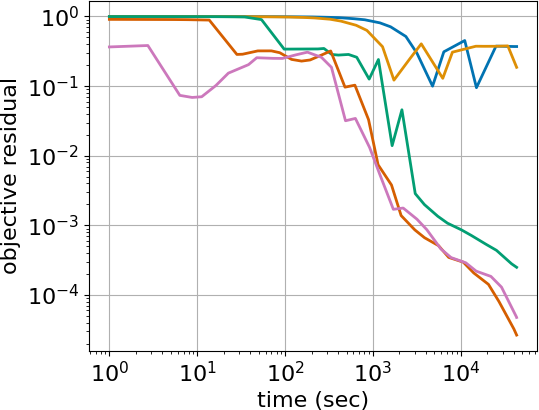}
}
\hfill
\subfloat[$k_p = 10$\label{subfig:obj-gpu-10}]{%
  \includegraphics[width=0.32\textwidth]{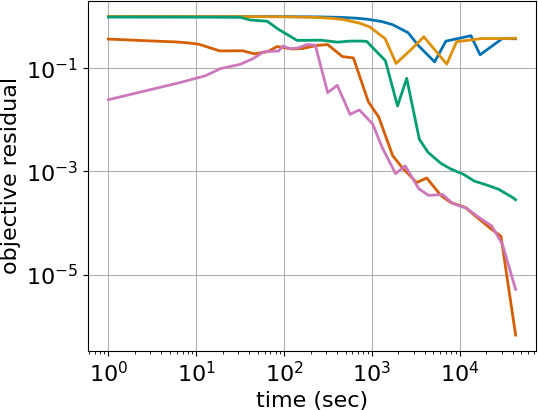}
}
\caption{\textbf{Objective gap vs. time for different settings of $k_c$ and $k_p$.} In each plot, the x-axis is time (up to 12 hours) and the y-axis is the relative primal objective suboptimality. In every case, USBS is cold-started on the \smsf{144} instance from the \texttt{DIMACS10} dataset. Each plot considers one value of $k_p$ and several values of $k_c$.  All runs were executed on a single NVIDIA GeForce 1080 Ti GPU. We observe that USBS performs best when $k_c \geq k_p$.} 
\label{fig:obj_grid_gpu}
\end{figure*}
\begin{figure*}[h!]
\subfloat[$k_c = 8, k_p = 0$\label{subfig:infeas-hardware-0}]{%
  \includegraphics[width=0.32\textwidth]{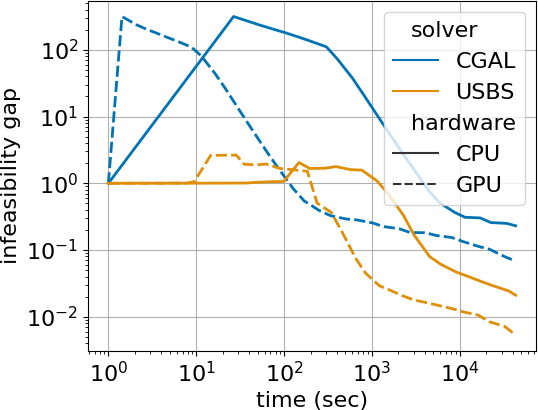}
}
\hfill
\subfloat[$k_c = 8, k_p = 1$\label{subfig:infeas-hardware-1}]{%
  \includegraphics[width=0.32\textwidth]{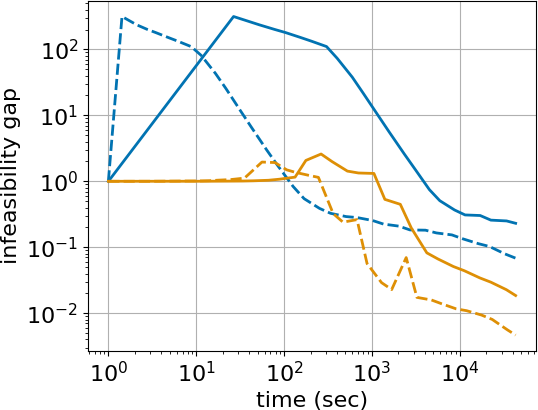}
}
\hfill
\subfloat[$k_c = 8, k_p = 2$\label{subfig:infeas-hardware-2}]{%
  \includegraphics[width=0.32\textwidth]{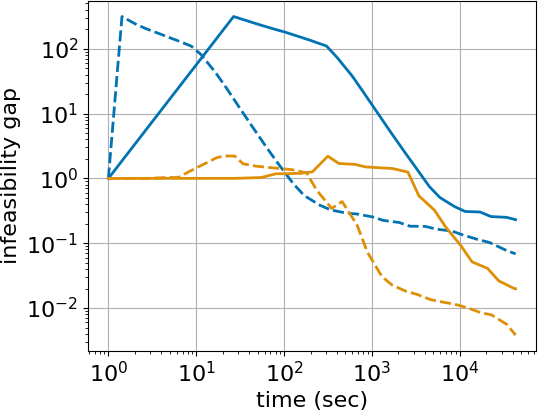}
}

\subfloat[$k_c = 8, k_p = 5$\label{subfig:infeas-hardware-5}]{%
  \includegraphics[width=0.32\textwidth]{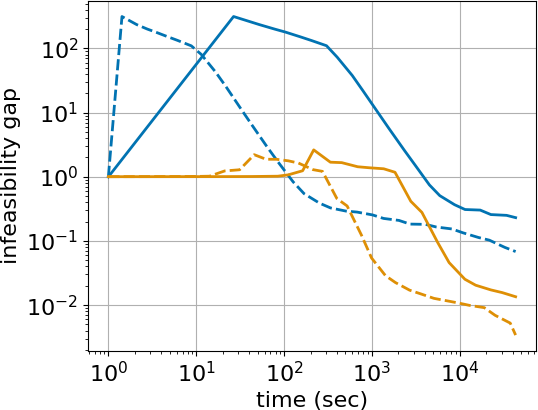}
}
\hfill
\subfloat[$k_c = 8, k_p = 8$\label{subfig:infeas-hardware-8}]{%
  \includegraphics[width=0.32\textwidth]{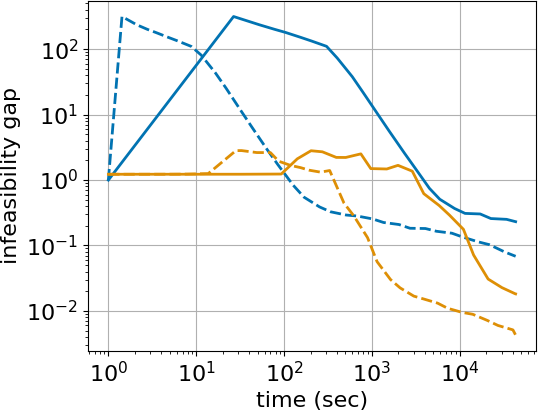}
}
\hfill
\subfloat[$k_c = 8, k_p = 10$\label{subfig:infeas-hardware-10}]{%
  \includegraphics[width=0.32\textwidth]{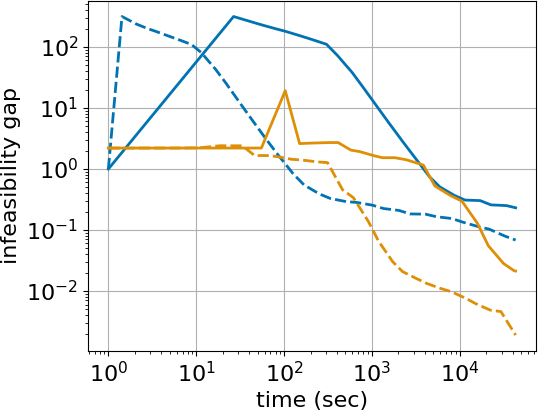}
}
\caption{\textbf{Infeasibility gap vs. time on CPU and GPU.} In each plot, the x-axis is time (up to 12 hours) and the y-axis is the relative infeasibility gap. In every setting of $k_c$ and $k_p$, we compare CGAL and USBS  cold-started on the \smsf{144} instance from the \texttt{DIMACS10} dataset on both a compute node with 16 cores and 128GB of RAM (\texttt{CPU}) and a single NVIDIA GeForce 1080 Ti GPU (\texttt{GPU}). We observe that USBS performs best when $k_c \geq k_p$.} 
\label{fig:infeas_cpu_vs_gpu8}
\end{figure*}
\begin{figure*}[h!]
\subfloat[$k_c = 8, k_p = 0$\label{subfig:obj-hardware-0}]{%
  \includegraphics[width=0.32\textwidth]{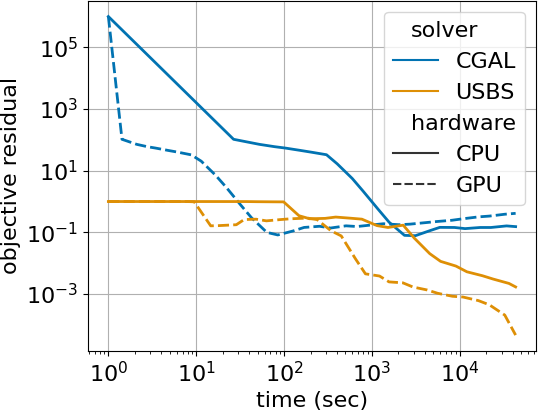}
}
\hfill
\subfloat[$k_c = 8, k_p = 1$\label{subfig:obj-hardware-1}]{%
  \includegraphics[width=0.32\textwidth]{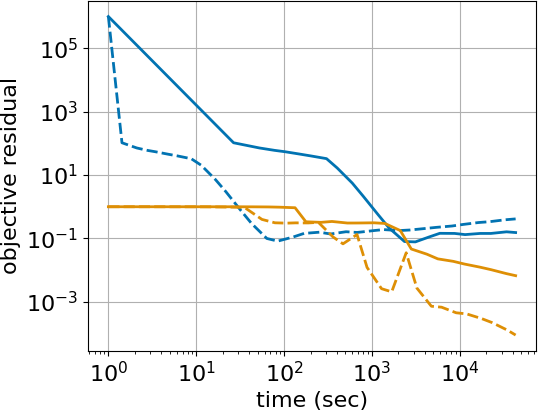}
}
\hfill
\subfloat[$k_c = 8, k_p = 2$\label{subfig:obj-hardware-2}]{%
  \includegraphics[width=0.32\textwidth]{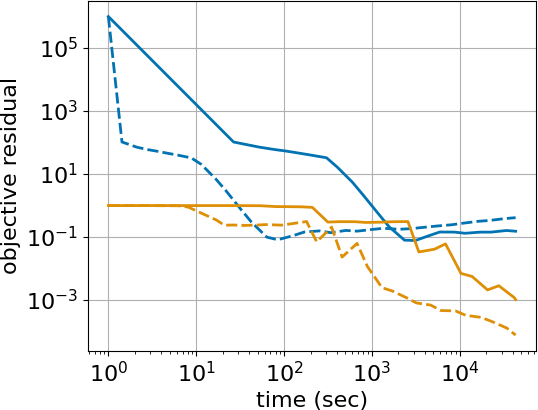}
}

\subfloat[$k_c = 8, k_p = 5$\label{subfig:obj-hardware-5}]{%
  \includegraphics[width=0.32\textwidth]{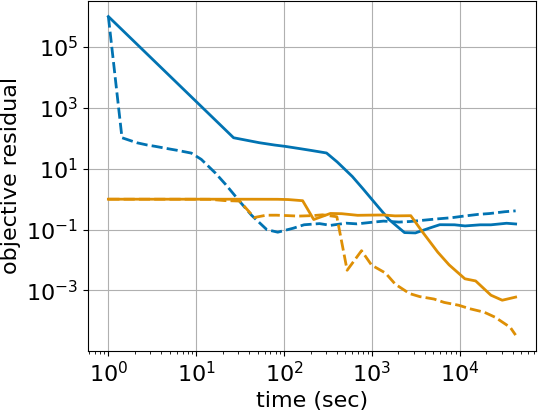}
}
\hfill
\subfloat[$k_c = 8, k_p = 8$\label{subfig:obj-hardware-8}]{%
  \includegraphics[width=0.32\textwidth]{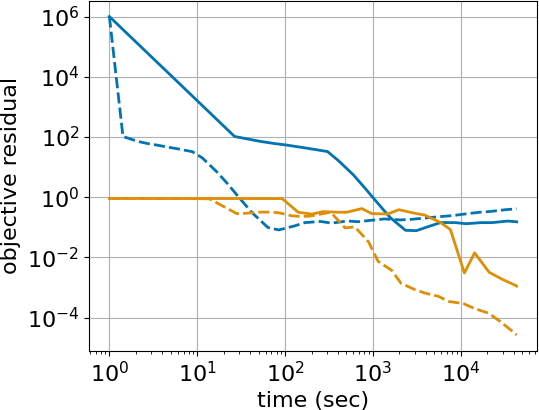}
}
\hfill
\subfloat[$k_c = 8, k_p = 10$\label{subfig:obj-hardware-10}]{%
  \includegraphics[width=0.32\textwidth]{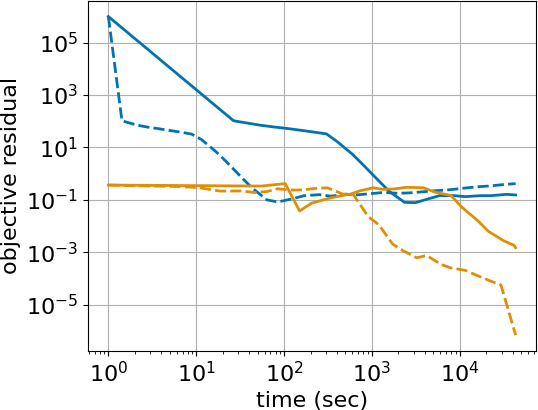}
}
\caption{\textbf{Objective gap vs. time on CPU and GPU.} In each plot, the x-axis is time (up to 12 hours) and the y-axis is the relative primal objective suboptimality. In every setting of $k_c$ and $k_p$, we compare CGAL and USBS cold-started on the \smsf{144} instance from the \texttt{DIMACS10} dataset on both a compute node with 16 cores and 128GB of RAM (\texttt{CPU}) and a single NVIDIA GeForce 1080 Ti GPU (\texttt{GPU}). We observe that USBS performs best when $k_c \geq k_p$.} 
\label{fig:obj_cpu_vs_gpu8}
\end{figure*}

\cref{fig:infeas_grid_cpu,fig:infeas_grid_gpu} compare USBS's infeasibility gap against time for different settings of $k_c$ and $k_p$ on the \smsf{144} instance from \texttt{DIMACS10} on both CPU and GPU. 
\cref{fig:obj_grid_cpu,fig:obj_grid_gpu} compare USBS's infeasibility gap against time for different settings of $k_c$ and $k_p$ on the \smsf{144} instance from \texttt{DIMACS10} on both CPU and GPU. 
\cref{fig:infeas_cpu_vs_gpu8,fig:obj_cpu_vs_gpu8} compare CGAL and USBS on CPU and GPU for different settings of $k_c$ and $k_p$ on the \smsf{144} instance from \texttt{DIMACS10}.
We observe anywhere from 10-25x speedup on GPU as compared to CPU. We also observe that USBS performs best when $k_c \geq k_p$.

\clearpage
\subsection{QAP}
\autoref{fig:qap_relative_gap} and~\autoref{fig:qap_best_relative_gap} plot \smtt{relative gap} and \smtt{best relative gap} against time (in seconds) for three instances, respectively. 
\autoref{fig:qap_relative_gap} shows that even for large instances the \smtt{best relative gap} is obtained within the first few minutes of the hour of optimization. 
In these instances, we can also observe that USBS not only obtains a better \smtt{best relative gap}, but also is able to leverage a warm-start solution to improve performance.
\autoref{fig:qap_bar_chart} shows the \smtt{best relative gap} obtained for ten data instances over one hour of optimization. It can be seen that warm-starting does not always yield a better \smtt{best relative gap}, but USBS is better able to leverage a warm-start solution than CGAL. 
These results show that warm-starting is a beneficial heuristic for finding a quality approximate solution to QAPs. 
The warm-starting technique used in these experiments is not the only possible warm-starting initialization strategy.
For examples, one could generate several warm-start initializations for~\eqref{eq:qap_sdp} by dropping other rows and columns of $D$ and $W$ besides the last row and column.
Then, after creating \smtt{n} warm-start initializations, we could take the best upper bound obtained by optimizing~\eqref{eq:qap_sdp} from each of those warm-start initializations.
\begin{figure*}[h!]
\subfloat[pr136\label{subfig:gap_pr136}]{%
  \includegraphics[width=0.31\textwidth]{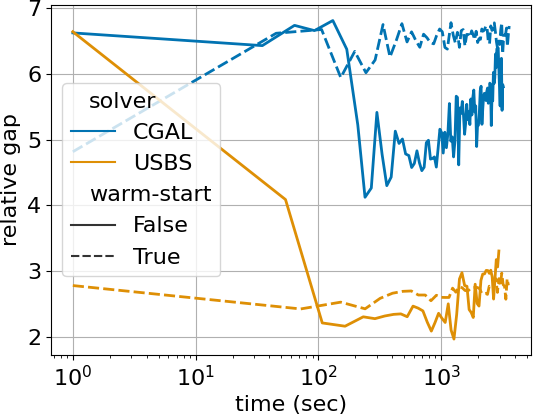}
}
\hfill
\subfloat[kroA150\label{subfig:gap_kroA150}]{%
  \includegraphics[width=0.31\textwidth]{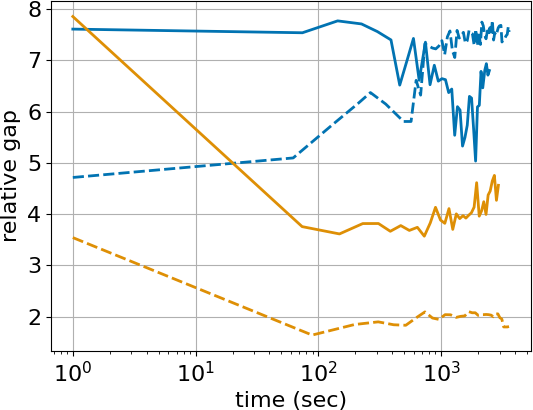}
}
\hfill
\subfloat[tai150b\label{subfig:gap_tai150b}]{%
  \includegraphics[width=0.32\textwidth]{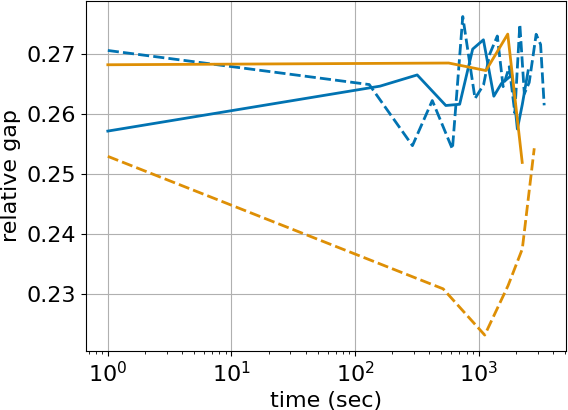}
}
\caption{\textbf{\smtt{relative gap} ($\downarrow$) vs.~time.} We plot the \texttt{relative gap} (y-axis) against time in seconds (x-axis) for three instances from \texttt{QAPLIB} and \texttt{TSPLIB}. We observe that for both algorithms the best rounded solution is found early in optimization. In addition, we observe that USBS is able to more reliably leverage a warm-start initialization.}
\label{fig:qap_relative_gap}
\vskip -0.1in
\end{figure*}
\begin{figure*}[h!]
\subfloat[pr136\label{subfig:gap_pr136}]{%
  \includegraphics[width=0.31\textwidth]{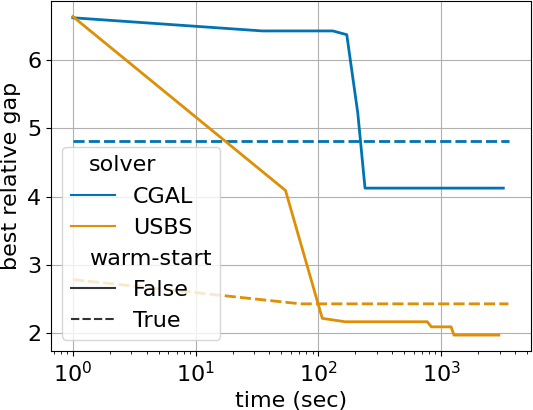}
}
\hfill
\subfloat[kroA150\label{subfig:gap_kroA150}]{%
  \includegraphics[width=0.31\textwidth]{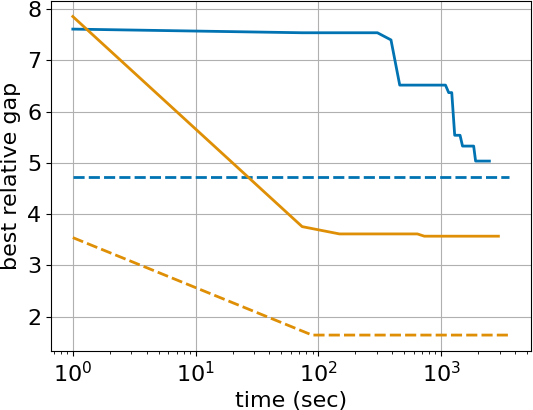}
}
\hfill
\subfloat[tai150b\label{subfig:gap_tai150b}]{%
  \includegraphics[width=0.32\textwidth]{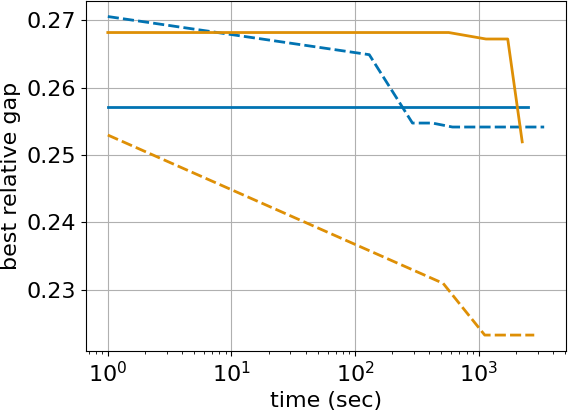}
}
\caption{\textbf{\smtt{best relative gap} ($\downarrow$) vs.~time.} We plot the \smtt{best relative gap} (y-axis) against time in seconds (x-axis) for three instances from \texttt{QAPLIB} and \texttt{TSPLIB}. We observe that USBS is able to more reliably leverage a warm-start initialization.}
\label{fig:qap_best_relative_gap}
\vskip -0.1in
\end{figure*}
\begin{figure*}[h!]
\includegraphics[width=\textwidth]{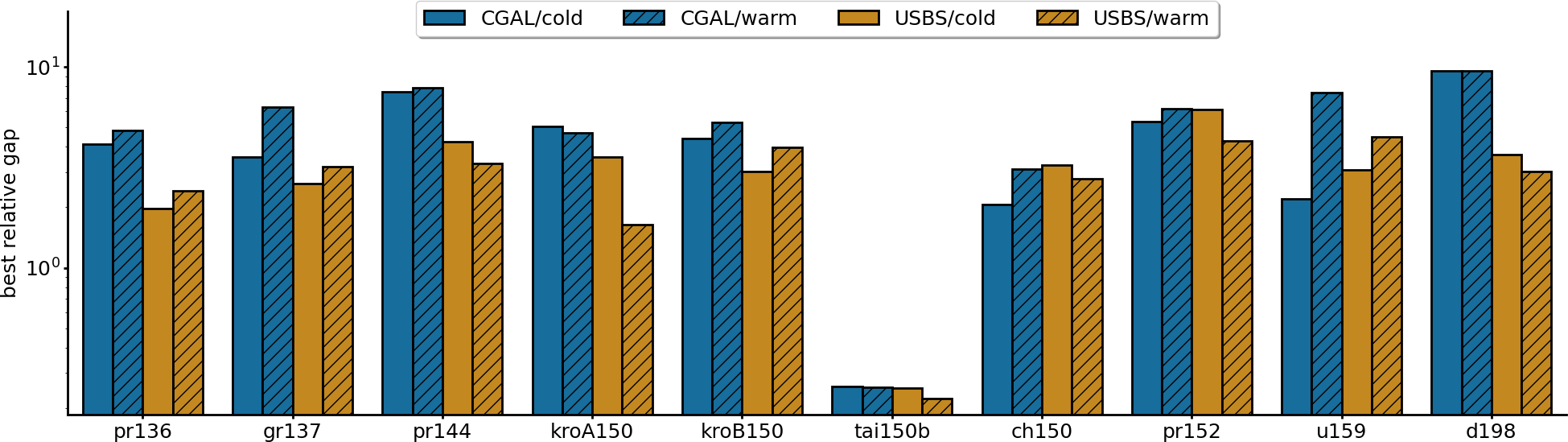}
\caption{\textbf{\smtt{best relative gap ($\downarrow$).}} The \smtt{best relative gap} obtained in one hour of optimization is shown for ten instances from \texttt{QAPLIB} and \texttt{TSPLIB}. We observe that on most problem instance that USBS produces a better relative gap than CGAL and that a warm-start initialization helps USBS obtain a better relative gap. CGAL is much less reliable in leveraging a warm-start initialization.}
\label{fig:qap_bar_chart}
\end{figure*}

\clearpage
\subsection{Interactive Entity Resolution with \econstraints}
\begin{wrapfigure}{r}{8.5cm}
\includegraphics[width=8.5cm]{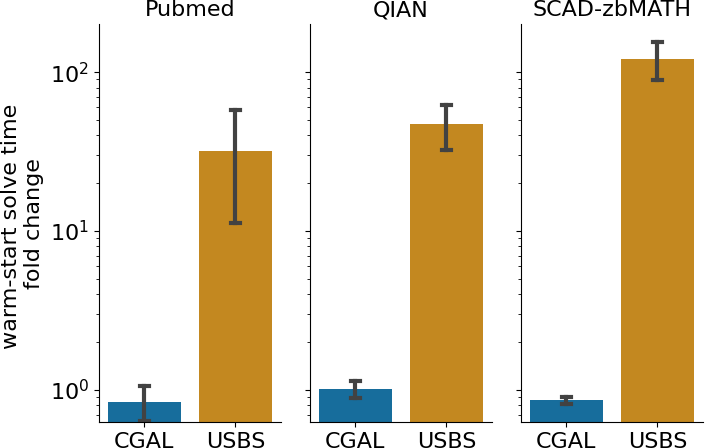}
\caption{\textbf{Average warm-start fold change ($\uparrow$).} Warm-start fold change represents the ratio of SDP solve time without warm-starting divided by the SDP solve time with warm-starting per \econstraint\!\!. The error bars indicate one standard deviation from the mean. We observe that USBS is able to much more reliably leverage a warm-start initialization. In addition, we observe that the performance gap between CGAL and USBS grows as the problem size grows.}
\label{fig:ecc_relative_improvement}
\end{wrapfigure}
\autoref{fig:ecc_relative_improvement} shows the average warm-start SDP solve time fold change. Warm-start fold change greater than one indicates a speedup in solve time while warm-start fold change less than one indicates a slowdown in solve time. We see that the warm-start fold change for CGAL is less than or equal to one, indicating a slowdown consistent with~\autoref{fig:ecc_cumulative_solve_time}. We see that on average warm-starting affords USBS a 20-100x speedup on average per \econstraint\!\!. Note that this is much faster than the 2-3x we see in~\autoref{fig:ecc_cumulative_solve_time}. This is due to the fact that sometimes warm-starting does not help USBS, which makes intuitive sense if the warm-start initialization is far away from the solution set for the new SDP. We believe that with some further experimentation including additional pairwise learned similarities between mentions and \econstraints or different warm-starting strategies we might be able to mitigate these situations. 
Regardless, in most cases, warm-starting provides a significant improvement in convergence time when using USBS.
We also note that~\autoref{fig:ecc_relative_improvement} shows that warm-starting provides more of a benefit the larger the SDP we are trying to solve.

\end{document}